\newcommand{\fraka}{\mathfrak{a}}
\newcommand{\frakk}{\mathfrak{k}}
\newcommand{\frakn}{\mathfrak{n}}
\newcommand{\CC}{\mathbb{C}}
\newcommand{\NN}{\mathbb{N}}
\newcommand{\RR}{\mathbb{R}}
\newcommand{\ZZ}{\mathbb{Z}}
\newcommand{\calC}{\mathcal{C}}
\newcommand{\calD}{\mathcal{D}}
\newcommand{\calF}{\mathcal{F}}
\newcommand{\calL}{\mathcal{L}}
\newcommand{\calO}{\mathcal{O}}
\newcommand{\calP}{\mathcal{P}}
\newcommand{\0}{{\bf 0}}
\newcommand{\R}{\mathbb{R}}
\newcommand{\fedA}{\mathbf{A}}
\newcommand{\fedT}{\mathbf{T}}
\newcommand{\fedK}{\mathbf{K}}
\DeclareMathOperator{\GL}{GL}
\DeclareMathOperator{\SL}{SL}
\DeclareMathOperator{\upO}{O}
\DeclareMathOperator{\SO}{SO}
\DeclareMathOperator{\so}{\mathfrak{so}}
\DeclareMathOperator{\Ind}{Ind}
\DeclareMathOperator{\tr}{tr}
\DeclareMathOperator{\ad}{ad}
\DeclareMathOperator{\Ad}{Ad}
\DeclareMathOperator{\Hom}{Hom}
\DeclareMathOperator{\sgn}{sgn}
\DeclareMathOperator{\const}{const}
\DeclareMathOperator{\diag}{diag}
\renewcommand\Re{\operatorname{Re}}
\newcommand{\reg}{\textup{reg}}
\theoremstyle{plain}
\newtheorem{theorem}{Theorem}[section]
\newtheorem{proposition}[theorem]{Proposition}
\newtheorem{lemma}[theorem]{Lemma}
\newtheorem{corollary}[theorem]{Corollary}
\newtheorem{thmalph}{Theorem}
\theoremstyle{definition}
\newtheorem{remark}[theorem]{Remark}
\title[Symmetry breaking operators for $\big(\text{\normalfont GL}(n+1,\R),\text{\normalfont GL}(n,\R)\big)$]{Construction and analysis of symmetry breaking operators for the pair $\big(\text{\normalfont GL}(n+1,\R),\text{\normalfont GL}(n,\R)\big)$}
\author{Jonathan Ditlevsen \& Jan Frahm}
\address{Department of Mathematics, Aarhus University, Ny Munkegade 118, 8000 Aarhus C, Denmark}
\email{JonathanDitlevsen@gmail.com, Frahm@math.au.dk}
\begin{document}

\maketitle

\begin{abstract}
The pair of real reductive groups $(G,H)=(\GL(n+1,\RR),\GL(n,\RR))$ is a strong Gelfand pair, i.e. the multiplicities $\dim\Hom_H(\pi|_H,\tau)$ are either $0$ or $1$ for all irreducible Casselman--Wallach representations $\pi$ of $G$ and $\tau$ of $H$. This paper is concerned with the construction of explicit intertwining operators in $\Hom_H(\pi|_H,\tau)$, so-called \emph{symmetry breaking operators}, in the case where both $\pi$ and $\tau$ are principal series representations. Such operators come in families that depend meromorphically on the induction parameters, and we show how to normalize them in order to make the parameter dependence holomorphic. This is done by establishing explicit \emph{Bernstein--Sato} identities for their distribution kernels as well as explicit \emph{functional identities} for the composition of symmetry breaking operators with standard Knapp--Stein intertwining operators for $G$ and $H$. We also show that the obtained normalization is optimal and identify a subset of parameters for which the family of operators vanishes. Finally, we relate the operators to the local archimedean Rankin--Selberg integrals and use this relation to evaluate them on the spherical vectors.
\end{abstract}

\section*{Introduction}

\noindent One of the popular modern themes in the representation theory of reductive groups is the study of branching problems. Here, one is interested in the restriction $\pi|_H$ of an irreducible representation of a group $G$ to a subgroup $H$. In the category of Casselman--Wallach representations of real reductive groups, this leads to the study of the multiplicities
\[
\dim \Hom_H(\pi|_H,\tau)\in \NN\cup \{\infty\},
\]
quantifying how many times an irreducible representation $\tau$ of $H$ occurs as a quotient of $\pi|_H$. The study of these multiplicities has been a very active research topic during the last decades, leading to profound conjectures in particular in the case of multiplicity-one pairs $(G,H)$ (i.e. the multiplicities are either $0$ or $1$) such as the Gan--Gross--Prasad conjectures.

It turns out that for a number of applications such as branching laws for unitary representations~\cite{W21}, estimates for periods of automorphic forms~\cite{BR10,FS18}, partial differential equations~\cite{MOZ16,FOZ20} or conformal geometry~\cite{FOS19,JO20,JO22}, knowledge about the multiplicities is not sufficient and one needs explicit descriptions of intertwining operators in $\Hom_H(\pi|_H,\tau)$. This is also the objective of Stage C in Kobayashi's ABC program for branching problems (see \cite{K15}), where intertwining operators in $\Hom_H(\pi|_H,\tau)$ are called \emph{symmetry breaking operators}.

The construction and study of explicit symmetry breaking operators has been the subject of several works during the last decade. Most notably, in the two monographs~\cite{KS15,KS18} by Kobayashi--Speh symmetry breaking operators between principal series representations for the pair $(G,H)=(\upO(n+1,1),\upO(n,1))$ of rank one orthogonal groups are studied in great detail. For spherical principal series, this analysis was later extended by Frahm--Weiske~\cite{FW20} to all pairs $(G,H)$ where both $G$ and $H$ have real rank one and the multiplicities are finite. The only higher rank group $G$ for which similar results have been obtained is the product $G=\upO(1,n)\times\upO(1,n)$ of two rank one orthogonal groups for which intertwining operators for the diagonal subgroup $H=\upO(1,n)$ were studied by Clerc~\cite{C16}. In particular, all known examples are built from rank one groups.

In this paper, we attempt a detailed analysis of symmetry breaking operators between principal series for the multiplicity-one pair $(G,H)=(\GL(n+1,\RR),\GL(n,\RR))$ (see \cite{SZ12} for the multiplicity-one property). Note that $G$ has real rank $n+1$, so this work can be viewed as a first step towards the study of symmetry breaking operators for higher rank groups. We remark that many of our techniques have a chance of working in a more general setting, e.g. for multiplicity-one pairs of split groups, or even in the setting of strongly spherical reductive pairs (see \cite{F23} for the construction of explicit families of such operators).

\subsection*{Methods and results}
Let $(G,H)=(\GL(n+1,\RR),\GL(n,\RR))$ with $n\geq1$. Every pair $(\xi,\lambda)\in (\ZZ/2\ZZ)^{n+1}\times \CC^{n+1}$ determines a character $\chi_1\otimes\cdots\otimes\chi_{n+1}$ of the Borel subgroup $P_G$ of upper triangular matrices by letting the $i$-th diagonal entry act by the character
$$ \chi_i(x)=|x|_{\xi_i}^{\lambda_i}=\sgn(x)^{\xi_i}|x|^{\lambda_i} \quad \mbox{of }\RR^\times. $$
In the same way, $(\eta,\nu)\in(\ZZ/2\ZZ)^n\times\CC^n$ determines a character $\psi_1\otimes\cdots\otimes\psi_n$ of the Borel subgroup $P_H$ of $H$. We write
$$ \pi_{\xi,\lambda} = \Ind_{P_G}^G(\chi_1\otimes\cdots\otimes\chi_{n+1}) \qquad \mbox{and} \qquad \tau_{\eta,\nu} = \Ind_{P_H}^H(\psi_1\otimes\cdots\otimes\psi_n) $$ for the corresponding principal series representations of $G$ and $H$ (smooth normalized parabolic induction, see Section \ref{sec:PrincipalSeries} for details). Note that $\pi_{\xi,\lambda}$ and $\tau_{\eta,\nu}$ are irreducible for generic $\lambda$ and $\nu$, so the multiplicity $\dim\Hom_H(\pi_{\xi,\lambda}|_H,\tau_{\eta,\nu})$ is generically bounded above by $1$.

In \cite[\S7]{F23} the second author constructed a family of symmetry breaking operators $A_{\xi,\lambda}^{\eta,\nu}\in\Hom_H(\pi_{\xi,\lambda}|_H,\tau_{\eta,\nu})$ depending meromorphically on $(\lambda,\nu)\in\CC^{n+1}\times\CC^n$. This family is given by a distribution kernel $K_{\xi,\lambda}^{\eta,\nu}\in \calD'(G)$ in the sense that
\[
A_{\xi,\lambda}^{\eta,\nu}f(h)=\int_{G/P_G}K_{\xi,\lambda}^{\eta,\nu}(h^{-1}g)f(g)\,d(gP_G),
\]
where $d(gP_G)$ denotes the unique (up to scalar multiples) invariant integral of densities on $G/P_G$ and  the integral has to be understood in the distribution sense (see Section~\ref{sec:SymmetryBreakingKernels} for details). The kernel $K_{\xi,\lambda}^{\eta,\nu}$ can be expressed in terms of the minors
\[
    \Phi_k(g)=\det(g_{ij})_{i=n-k+2,\dots,n+1,\,j=1,\dots,k} \qquad \mbox{and} \qquad \Psi_k(g)=\det(g_{ij})_{i=n-k+1,\dots,n,\,j=1,\dots,k}
\]
by
\[
K_{\xi,\lambda}^{\eta,\nu}(g)=|\Phi_{n+1}(g)|^{\lambda_{n+1}+\frac{n}{2}}_{\xi_{n+1}}\prod_{i=1}^n|\Phi_i(g)|^{\lambda_i-\nu_{n+1-i}-\frac{1}{2}}_{\xi_i+\eta_{n+1-i}} |\Psi_i(g)|^{\nu_{n+1-i}-\lambda_{i+1}-\frac{1}{2}}_{\eta_{n+1-i}+\xi_{i+1}} \qquad (g\in G),
\]
whenever this defines a locally integrable function on $G$ (which is in particular the case if $\Re(\lambda_i-\nu_{n+1-i}-\frac{1}{2}),\Re(\nu_{n+1-i}-\lambda_{i+1}-\frac{1}{2})>0$ for all $i=1,\ldots,n$), and it follows from the results in \cite{F23} that it has a meromorphic extension to all $(\lambda,\nu)\in\CC^{n+1}\times\CC^n$. An important and natural problem in this context is to locate the poles of the kernel $K_{\xi,\lambda}^{\eta,\nu}$, as a function of $(\lambda,\nu)$, and to determine its residues in order to obtain explicit expressions of symmetry breaking operators for parameters where $K_{\xi,\lambda}^{\eta,\nu}$ is singular.

The first step in this program is to find a meromorphic function $n(\xi,\lambda,\eta,\nu)$ of $(\lambda,\nu)$ such that $\fedK_{\xi,\lambda}^{\eta,\nu}:=n(\xi,\lambda,\eta,\nu)^{-1}K_{\xi,\lambda}^{\eta,\nu}$ is holomorphic in $(\lambda,\nu)\in\CC^{n+1}\times\CC^n$. This is our first main result, and it is most easily stated in terms of local archimedean $L$-factors. Recall that the local $L$-factor of the character $\chi_{\varepsilon,\mu}(x)=|x|_\varepsilon^\mu$ of $\RR^\times$ ($\varepsilon\in\ZZ/2\ZZ$, $\mu\in\CC$) is defined by
\[
L(s,\chi_{\varepsilon,\mu}):=\pi^{-\frac{1}{2}(s+\mu+[\varepsilon])}\Gamma\Big(\frac{s+\mu+[\varepsilon]}{2}\Big) \qquad (s\in\CC),
\]
where $[\varepsilon]\in\{0,1\}$ denotes the unique representative of $\varepsilon\in\ZZ/2\ZZ$, and define
\[
n(\xi,\lambda,\eta,\nu) = \prod_{i+j\leq n+1} L(\tfrac{1}{2},\chi_i\psi_j^{-1})\prod_{i+j\geq n+2}L(\tfrac{1}{2},\chi_{i}^{-1}\psi_j).
\]

\begin{thmalph}[see Theorems~\ref{main thm} and \ref{optimal normalization}]\label{intro - hoved}
The family of distributions $\mathbf{K}_{\xi,\lambda}^{\eta,\nu}=n(\xi,\lambda,\eta,\nu)^{-1}K_{\xi,\lambda}^{\eta,\nu}$ depends holomorphically on $(\lambda,\nu)\in\CC^{n+1}\times \CC^n$ for all $(\xi,\eta)\in(\ZZ/2\ZZ)^{n+1}\times(\ZZ/2\ZZ)^n$. Moreover, this normalization is optimal in the sense that the set of $(\lambda,\nu)$ where $\mathbf{K}_{\xi,\lambda}^{\eta,\nu}=0$ is of codimension at least $2$ in $\CC^{n+1}\times\CC^n$.
\end{thmalph}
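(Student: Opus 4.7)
My plan is to prove the two statements separately, starting with holomorphicity and then deducing optimality from a residue computation. The whole argument rests on a careful description of the polar divisor of $K_{\xi,\lambda}^{\eta,\nu}$ as a function of $(\lambda,\nu)$.

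For holomorphicity, the first move is to work in the open cone of parameters where every exponent in the formula for $K_{\xi,\lambda}^{\eta,\nu}$ has large positive real part, so that the kernel is a genuine locally integrable function. From there I would establish a system of \emph{Bernstein--Sato identities}: for each minor factor $|\Phi_i|^{\alpha}_{\varepsilon}$ and $|\Psi_i|^{\alpha}_{\varepsilon}$ appearing in the product, produce an invariant differential operator on $G$ whose action on the full kernel shifts the exponent of that one factor by a positive integer, multiplied by an explicit polynomial $b(\lambda,\nu)$ whose roots match candidate $\Gamma$-function poles. Iterating these identities transports any $(\lambda,\nu)\in\CC^{n+1}\times\CC^n$ back into the convergence region, which delivers the meromorphic extension of $K_{\xi,\lambda}^{\eta,\nu}$ and pins down its polar divisor as a union of affine hyperplanes.

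The arithmetic check is then to verify that every such hyperplane also appears in the polar divisor of $n(\xi,\lambda,\eta,\nu)$ with at least the correct multiplicity. Each $L$-factor $L(\tfrac12,\chi_i\psi_j^{-1})$ contributes a ladder of simple poles along $\lambda_i-\nu_j\in-[\xi_i+\eta_j]-2\NN$, and symmetrically for $L(\tfrac12,\chi_i^{-1}\psi_j)$. Since the exponent of $\Phi_i$ in the kernel is $\lambda_i-\nu_{n+1-i}-\tfrac12$ and that of $\Psi_i$ is $\nu_{n+1-i}-\lambda_{i+1}-\tfrac12$, I expect each ladder of poles coming from a single minor factor to line up with one specific $L$-factor in $n(\xi,\lambda,\eta,\nu)$, while the remaining $L$-factors account for poles that arise only through the cross-interaction of several minors. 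I would set up this matching by induction on $n$, exploiting the recursive way in which the $\Phi_i$ and $\Psi_i$ are built from smaller minors.

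For optimality, my plan is to show that along each hyperplane $H$ in the polar divisor of $n(\xi,\lambda,\eta,\nu)$ the restriction of $\mathbf{K}_{\xi,\lambda}^{\eta,\nu}$ to $H$ is not the zero distribution. Concretely, on each such $H$ the unnormalized kernel $K_{\xi,\lambda}^{\eta,\nu}$ has a simple pole whose residue, once divided by the leading coefficient of $n(\xi,\lambda,\eta,\nu)$ along $H$, I would identify as a distribution supported on the vanishing locus of the relevant minor. Nontriviality of this residue can then be checked by pairing against a well-chosen compactly supported test function, or equivalently by reading off the value of the corresponding symmetry breaking operator on a spherical vector via the Rankin--Selberg integral representation alluded to in the abstract. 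Granted non-vanishing along every irreducible component of the polar divisor of $n$, the zero locus of $\mathbf{K}_{\xi,\lambda}^{\eta,\nu}$ must lie in the union of pairwise intersections of these components, hence has codimension at least $2$. The hardest step, I expect, is the combinatorial matching between the Bernstein--Sato poles of $K$ and the zeros of $n(\xi,\lambda,\eta,\nu)^{-1}$: the kernel has $2n+1$ interacting factors while $n(\xi,\lambda,\eta,\nu)$ is built from $n(n+1)$ distinct $L$-factors, so a naive factor-by-factor analysis will miss many of the cross-cancellations one needs. To make the two counts agree I would complement the Bernstein--Sato bookkeeping with the explicit \emph{functional identities} for the composition of $A_{\xi,\lambda}^{\eta,\nu}$ with standard Knapp--Stein intertwiners for $G$ and $H$ (the second technical pillar highlighted in the abstract): such equations permute parameters by elements of the two Weyl groups and let one reduce each candidate polar hyperplane to a low-dimensional model case in which the residue can be computed by hand.
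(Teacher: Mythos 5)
Your holomorphy half follows the paper's architecture (Bernstein--Sato identities to extend meromorphically, then functional identities to refine the normalization), but the ``arithmetic check'' as you state it would fail. The polar divisor delivered by the Bernstein--Sato identities is controlled by the \emph{full} gamma factors $\Gamma(\lambda_i-\nu_j+\tfrac12)$, with poles along $\lambda_i-\nu_j+\tfrac12\in-\NN_0$; this is strictly larger than the polar divisor of $n(\xi,\lambda,\eta,\nu)$, whose $L$-factors have poles only along the parity-selected ladders $\lambda_i-\nu_j+\tfrac12+[\xi_i+\eta_j]\in-2\NN_0$. So one cannot match hyperplanes one-to-one; one must prove that the over-normalized kernel $\mathbb{K}_{\xi,\lambda}^{\eta,\nu}$ acquires forced zeros on exactly the wrong-parity half of each ladder. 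The paper does this with no residue computation: Remark~\ref{alternativ beskrivelse} and Corollary~\ref{grimt koro} express $\mathbb{K}_{\xi,\lambda}^{\eta,\nu}$ as an explicit meromorphic multiple of the (holomorphic, nowhere-vanishing) normalized Knapp--Stein operator applied to another holomorphic kernel, and the proportionality factor vanishes precisely on the wrong-parity hyperplanes, so $\mathbb{K}$ inherits those zeros. Your plan to reduce to a model case ``in which the residue can be computed by hand'' is a much heavier and unnecessary mechanism here.

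The genuine gap is in optimality. The paper's argument is soft: (i) since $K_{\xi,\lambda}^{\eta,\nu}$ restricted to the open dense stratum $\{\Phi_1,\dots,\Phi_{n+1},\Psi_1,\dots,\Psi_n\neq0\}$ is a nowhere-vanishing function, $\mathbf{K}_{\xi,\lambda}^{\eta,\nu}$ can only vanish where $n(\xi,\lambda,\eta,\nu)$ has a pole, i.e.\ on hyperplanes $\lambda_i-\nu_j+\tfrac12+[\xi_i+\eta_j]\in-2\NN_0$ with $i+j\leq n+1$ (and the mirror family); (ii) if $\mathbf{K}$ vanished on one such hyperplane, the normalized functional identities of Remark~\ref{normaliseret funktionalligning} --- whose coefficients are generically non-zero there --- would propagate the vanishing to the hyperplane with the same equation but arbitrary index pair $(i',j')$, in particular to one with $i'+j'\geq n+2$, which by (i) cannot be a zero set; contradiction. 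Your substitute is either not carried out (the residues are distributions supported on intersections of zero sets of minors, and pairing them against test functions is a serious computation that the paper never needs) or only covers $\xi=\eta=(0,\dots,0)$: the spherical vector exists only then, and even in that case one must observe that $e_G(\lambda)e_H(-\nu)$ does not vanish identically on the hyperplane in question (true, because its zero hyperplanes involve $\lambda_i-\lambda_j$ and $\nu_i-\nu_j$ rather than $\lambda_i-\nu_j$). As described, your route can be completed in the spherical case but not for general $(\xi,\eta)$; you would need either the paper's Weyl-group propagation argument or an evaluation on non-spherical $K$-types.
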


Note that if $\mathbf{K}_{\xi,\lambda}^{\eta,\nu}=0$ on a hyperplane given by the linear form $\ell(\xi,\lambda,\eta,\nu)=0$, then $\ell(\xi,\lambda,\eta,\nu)^{-1}\mathbf{K}_{\xi,\lambda}^{\eta,\nu}$ would still be holomorphic in $(\lambda,\nu)$ with possibly less zeros. In this sense, the second part of Theorem~\ref{intro - hoved} asserts that it is not possible to renormalize $\mathbf{K}_{\xi,\lambda}^{\eta,\nu}$ so that it vanishes for less parameters $(\lambda,\nu)$. In Theorem~\ref{Alle nuller} we show that the set of zeros is in fact of codimension equal to $2$ by identifying a locally finite union of affine subspaces of $\CC^{n+1}\times\CC^n$ of codimension $2$ on which $\mathbf{K}_{\xi,\lambda}^{\eta,\nu}$ vanishes.

The proof of Theorem~\ref{intro - hoved} uses essentially two main tools. The first one is a collection of explicit Bernstein--Sato type identities for the kernels $K_{\xi,\lambda}^{\eta,\nu}$. To find such identities, we make use of an idea due to Beckmann--Clerc~\cite{BC12} who considered a similar problem for the pair $(G,H)=(\upO(1,n)\times\upO(1,n),\operatorname{diag}\upO(1,n))$ (see also \cite{FOS19} for another instance of this technique for rank one orthogonal groups). Multiplying $K_{\xi,\lambda}^{\eta,\nu}$ with one of the functions $\Phi_i(g)$ or $\Psi_j(g)$ shifts some of the parameters, but in a direction in which the kernel becomes more regular. To shift parameters into directions in which the kernel is more singular, we compose the multiplication operator with standard Knapp--Stein intertwining operators for $G$ and $H$. A delicate and technical computation shows that this indeed produces differential operators and hence Bernstein--Sato identities. However, since the Bernstein--Sato identities do not take into account the sign characters specified by $\xi$ and $\eta$, this technique does not yield an optimal normalization in the above sense.

The second main tool fixes this. It consists of identities relating the composition of a symmetry breaking operator with a Knapp--Stein intertwining operator to another symmetry breaking operator. In \cite{KS15} such formulas are called \emph{functional identities}. To state these identities, recall that for every element $w$ in the Weyl group for $G$ there exists a meromorphic family of $G$-intertwining operators $T^w_{\xi,\lambda}:\pi_{\xi,\lambda}\to\pi_{w(\xi,\lambda)}$. Abusing notation, we use the same notation $T_{\eta,\nu}^w:\tau_{\eta,\nu}\to\tau_{w(\eta,\nu)}$ for the analogous $H$-intertwining operators parameterized by Weyl group elements $w$ for $H$ (see Section~\ref{sec:StdIntertwiners} for details).

\begin{thmalph}[see Theorem~\ref{main theorem}]\label{thm:IntroA}
The following identities of meromorphic functions in $(\lambda,\nu)\in\CC^{n+1}\times\CC^n$ hold:
\[
A_{\xi,\lambda}^{\eta,\nu}\circ T^w_{w^{-1}(\xi,\lambda)} = c_w(\xi,\lambda)A_{w^{-1}(\xi,\lambda)}^{\eta,\nu}\quad \text{and} \quad T^w_{\eta,\nu}\circ A_{\xi,\lambda}^{\eta,\nu}=d_w(\eta,\nu)A_{\xi,\lambda}^{w(\eta,\nu)},
\]
where $c_w(\xi,\lambda)$ and $d_w(\eta,\nu)$ are given explicitly in Theorem~\ref{main theorem}.
\end{thmalph}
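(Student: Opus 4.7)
The plan is to combine the Sun--Zhu multiplicity-one theorem with the cocycle structure of Knapp--Stein intertwiners, reducing each identity to a one-variable archimedean integral for a single simple reflection. By multiplicity one, the space $\Hom_H(\pi_{\mu,\sigma}|_H,\tau_{\eta,\nu})$ is at most one-dimensional for generic parameters. Since both $A_{\xi,\lambda}^{\eta,\nu}\circ T^w_{w^{-1}(\xi,\lambda)}$ and $A_{w^{-1}(\xi,\lambda)}^{\eta,\nu}$ lie in this space for $(\mu,\sigma)=w^{-1}(\xi,\lambda)$, the first identity must hold with a uniquely determined meromorphic scalar $c_w(\xi,\lambda)$; the second yields $d_w(\eta,\nu)$ analogously. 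The task is then to identify these scalars with the explicit expressions of Theorem~\ref{main theorem}.

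Next I would exploit the cocycle relation $T^{w_1 w_2}=T^{w_1}\circ T^{w_2}$ (valid whenever $\ell(w_1 w_2)=\ell(w_1)+\ell(w_2)$). Iterating the first identity gives
\[
c_{w_1 w_2}(\xi,\lambda)=c_{w_1}(\xi,\lambda)\cdot c_{w_2}\bigl(w_1^{-1}(\xi,\lambda)\bigr),
\]
and similarly for $d_w$, so both scalars are multiplicative along reduced expressions. The formulas in Theorem~\ref{main theorem} are products over the inversions of $w$, hence well defined independently of the reduced decomposition chosen, so it suffices to verify each identity for a single simple reflection.

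For a simple reflection $s_i$ on the $G$-side, the Knapp--Stein operator is an integral transform along the one-parameter opposite-root subgroup $\bar U_{\alpha_i}=\{\bar u_i(x):x\in\RR\}$ whose kernel is an explicit power of $|x|$ in the sign character $\xi_i+\xi_{i+1}$. Translating the composition $A_{\xi,\lambda}^{\eta,\nu}\circ T^{s_i}_{s_i(\xi,\lambda)}$ into distribution kernels on $G/P_G$, the first functional identity reduces to a kernel identity of the shape
\[
\int_{\RR} K_{\xi,\lambda}^{\eta,\nu}\bigl(g\,\bar u_i(x)\bigr)\,|x|_{\xi_i+\xi_{i+1}}^{\,\lambda_{i+1}-\lambda_i-1}\,dx=c_{s_i}(\xi,\lambda)\, K_{s_i(\xi,\lambda)}^{\eta,\nu}(g)
\]
between meromorphic distributions on $G$. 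Right multiplication by $\bar u_i(x)$ is the column operation that replaces column $i$ of $g$ by column $i$ plus $x$ times column $i+1$; by inspection, only those minors whose column set contains index $i$ but not $i+1$ transform nontrivially, namely $\Phi_i$ and $\Psi_i$, each becoming an affine function of $x$ whose coefficients are again explicit minors of $g$. Factoring out the unchanged terms and performing a linear change of variables reduces the integral to a classical rank-one archimedean Beta/Riesz integral, evaluable in closed form as a ratio of $L$-factors of characters of $\RR^\times$. Matching this ratio with the rank-one factor of $c_{s_i}(\xi,\lambda)$ in Theorem~\ref{main theorem} finishes the $G$-side. The $H$-side identity is treated symmetrically: $T^{s_j}_{\eta,\nu}$ acts on the kernel as a rank-one row operation from the left, affecting again only two minors, and the same one-variable Riesz integral delivers $d_{s_j}(\eta,\nu)$.

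The main obstacle is the technical content of the simple-reflection step: the explicit combinatorial bookkeeping of how each of the $2n+1$ minor factors in $K_{\xi,\lambda}^{\eta,\nu}$ transforms under the rank-one column (resp.\ row) operation, the subsequent change of variables, and---most delicately---the exact matching of the resulting $L$-factor ratio with the $c_{s_i}$ and $d_{s_j}$ predicted by Theorem~\ref{main theorem}, including the correct parities of the sign characters $\xi_i+\xi_{i+1}$ and $\eta_j+\eta_{j+1}$. Once this rank-one identity is established with the right constant, the cocycle-based product assembly for arbitrary Weyl group elements is routine.
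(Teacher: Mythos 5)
Your proposal follows essentially the same route as the paper: multiplicity one gives the existence of the scalars, the cocycle relation for Knapp--Stein operators reduces everything to a simple reflection, and the simple-reflection case is an explicit kernel computation in which only the minors $\Phi_i$ and $\Psi_i$ (resp.\ their row analogues on the $H$-side) become affine in the integration variable, so that the integral collapses to the one-variable Riesz/Beta integral of Corollary~\ref{foldning cor}. The one ingredient you leave implicit is the Pl\"ucker-type identity of Lemma~\ref{algebra}, which factors the cross-term $|ad-bc|$ produced by that integral back into a product of the minors $\Psi_{i-1}\Phi_{i+1}$ so that the result is recognized as $K_{w_i(\xi,\lambda)}^{\eta,\nu}$; this is exactly the ``technical content'' you flag, and the paper resolves it as described.
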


The proof of Theorem~\ref{thm:IntroA} can be reduced to the case where $w$ is a simple reflection (see \eqref{Knapp-Stein}), in which case it boils down to an explicit computation using some identities for the minors $\Phi_i$ and $\Psi_j$. In Section~\ref{Eksempel} we discuss in detail the proofs of Theorems~\ref{intro - hoved} and \ref{thm:IntroA} in the case $n=1$. Most of the relevant arguments can already be seen here. In particular, we explain how these identities can be used to find the optimal normalization for $A_{\xi,\lambda}^{\eta,\nu}$ resp. $K_{\xi,\lambda}^{\eta,\nu}$.

When $\xi=(0,\ldots,0)$ and $\eta=(0,\ldots,0)$, both $\pi_\lambda=\pi_{\xi,\lambda}$ and $\tau_\eta=\tau_{\eta,\nu}$ are spherical, i.e. the contain a unique (up to scalar multiples) vector invariant under a maximal compact subgroup. We choose the maximal compact subgroups $K_G=\upO(n+1)$ and $K_H=\upO(n)$ and normalize the spherical vectors $\mathbf{1}_\lambda\in\pi_\lambda$ and $\mathbf{1}_\nu\in\tau_\nu$ such that they evaluate to $1$ at the identity. Applying the normalized operators $\mathbf{A}_\lambda^\nu=\mathbf{A}_{\xi,\lambda}^{\eta,\nu}$ to $\mathbf{1}_\lambda$ yields a spherical vector in $\tau_\nu$ which has to be a scalar multiple of $\mathbf{1}_\nu$. We determine the scalar explicitly:

\begin{thmalph}[see Theorem~\ref{thm:EvaluationSphericalVector}]\label{thm:IntroC}
For all $(\lambda,\nu)\in\CC^{n+1}\times\CC^n$ we have
\[
\mathbf{A}_{\lambda}^\nu\mathbf{1}_\lambda=e_G(\lambda)e_H(-\nu)\mathbf{1}_\nu,
\]
where $e_G$ resp. $e_H$ is the Harish-Chandra $e$-function for $G$ resp. $H$, i.e. the denominator in Harish-Chandra's $c$-function (see \eqref{eq:Efunction} for its definition).
\end{thmalph}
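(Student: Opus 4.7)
The plan is to reduce the statement to the computation of a single meromorphic scalar and to evaluate that scalar by identifying it with a local archimedean Rankin--Selberg integral on spherical vectors. Because $K_H\subset K_G$, the spherical vector $\mathbf{1}_\lambda\in\pi_\lambda$ is in particular $K_H$-invariant, and the $H$-equivariance of $\mathbf{A}_\lambda^\nu$ shows that $\mathbf{A}_\lambda^\nu\mathbf{1}_\lambda$ lies in $\tau_\nu^{K_H}$. For generic $\nu$ the representation $\tau_\nu$ is irreducible and $\tau_\nu^{K_H}$ is the one-dimensional line spanned by $\mathbf{1}_\nu$, so there is a meromorphic function $c(\lambda,\nu)$ on $\CC^{n+1}\times\CC^n$ with $\mathbf{A}_\lambda^\nu\mathbf{1}_\lambda=c(\lambda,\nu)\mathbf{1}_\nu$. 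Evaluating at $e\in H$ and using the normalization $\mathbf{1}_\nu(e)=1$ yields $c(\lambda,\nu)=(\mathbf{A}_\lambda^\nu\mathbf{1}_\lambda)(e)$; it therefore suffices to show that this value equals $e_G(\lambda)e_H(-\nu)$.

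In the tube domain where $K_{0,\lambda}^{0,\nu}$ is represented by an absolutely convergent integrand, one writes
$$(A_\lambda^\nu\mathbf{1}_\lambda)(e)=\int_{G/P_G}K_{0,\lambda}^{0,\nu}(g)\,\mathbf{1}_\lambda(g)\,d(gP_G),$$
which, after parametrizing the open Bruhat cell by the unipotent radical $\overline{N}_G$ of the opposite Borel, becomes an explicit integral of products of the minors $\Phi_i,\Psi_j$ against the Iwasawa cocycle weight in $\lambda$. I would then invoke the identification, developed in the Rankin--Selberg section of the paper advertised in the abstract, of $A_\lambda^\nu$ with a local archimedean Rankin--Selberg integral; applied to the spherical vectors, this rewrites $(A_\lambda^\nu\mathbf{1}_\lambda)(e)$ as the unramified Rankin--Selberg integral at $s=\tfrac12$ attached to $\pi_\lambda\times\tau_\nu$. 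The latter has a classical archimedean evaluation as a product of $L$-factors $L(\tfrac12,\chi_i\psi_j^{-1})$ over all $(i,j)$, together with $c$-function factors for $\GL(n+1,\RR)$ and $\GL(n,\RR)$ coming from the spherical Whittaker normalization.

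The final step is to reorganize the resulting product of $\Gamma$-factors as $n(\lambda,\nu)\cdot e_G(\lambda)\cdot e_H(-\nu)$. The factors $L(\tfrac12,\chi_i\psi_j^{-1})$ with $i+j\le n+1$ appear directly in $n(\lambda,\nu)$, while those with $i+j\ge n+2$ are matched, via the archimedean local functional equation relating $L(s,\chi)$ and $L(1-s,\chi^{-1})$ for characters of $\RR^\times$, to the factors $L(\tfrac12,\chi_i^{-1}\psi_j)$ in the definition of $n(\lambda,\nu)$; the remaining $\Gamma$-factors then assemble into the Harish-Chandra $e$-functions of $G$ and $H$ via their Gindikin--Karpelevich description as products of $L$-factors over the positive roots of $\GL(n+1,\RR)$ and $\GL(n,\RR)$. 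Dividing by $n(\lambda,\nu)$ yields the claim in the region of convergence, and meromorphic continuation in $(\lambda,\nu)$ extends it to all of $\CC^{n+1}\times\CC^n$. The main obstacle will be precisely this bookkeeping: tracking the two index ranges $i+j\le n+1$ and $i+j\ge n+2$ together with the $\varepsilon$-factors produced by each application of the local functional equation, and verifying that every non-$e$-function factor cancels cleanly against $n(\lambda,\nu)$.
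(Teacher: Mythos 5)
Your overall strategy agrees with the paper's: reduce to computing the scalar, express it via the Rankin--Selberg form evaluated on spherical vectors (Ishii--Stade), and divide by the normalization $n(\lambda,\nu)$. However, there is a genuine gap in the middle step, and the place you propose to fill it (the local functional equation) does not do the job.

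You assert that $(A_\lambda^\nu\mathbf{1}_\lambda)(e)$ equals the unramified Rankin--Selberg integral $\operatorname{RS}_\lambda^\nu(\mathbf{1}_\lambda\otimes\mathbf{1}_{-\nu})$ on the nose. This is false. The invariant form $\operatorname{SBO}_\lambda^\nu$ defined by the kernel $K_\lambda^\nu$ coincides (up to constant) with the \emph{open-orbit integral} $\calO_\lambda^\nu$, but that orbital form differs from the Rankin--Selberg form $\operatorname{RS}_\lambda^\nu$ by a nontrivial $\lambda,\nu$-dependent factor
\[
\operatorname{SBO}_\lambda^\nu = \const\times\prod_{i+j\geq n+2}\frac{L(\tfrac{1}{2},\chi_i^{-1}\psi_j)}{L(\tfrac{1}{2},\chi_i\psi_j^{-1})}\,\operatorname{RS}_\lambda^\nu.
\]
This is Theorem~7.2 in the text (the result of Li--Liu--Su--Sun), and it is a deep comparison of period integrals, not a formal manipulation. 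With this extra factor, the $L$-factors $L(\tfrac12,\chi_i\psi_j^{-1})$ with $i+j\geq n+2$ coming from Ishii--Stade convert exactly into the factors $L(\tfrac12,\chi_i^{-1}\psi_j)$ present in $n(\lambda,\nu)$, and everything cancels cleanly.

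Your proposed alternative — matching $L(\tfrac12,\chi_i\psi_j^{-1})$ against $L(\tfrac12,\chi_i^{-1}\psi_j)$ by the archimedean functional equation — would introduce the local gamma factors $\gamma(\tfrac12,\chi_i\psi_j^{-1})$, since $L(\tfrac12,\chi)$ and $L(\tfrac12,\chi^{-1})$ are \emph{not} equal; they differ by $\gamma(\tfrac12,\chi)$ up to an $\varepsilon$-factor. If you carry out your bookkeeping literally you end up with $\beta(\lambda,\nu)=e_G(\lambda)e_H(-\nu)\prod_{i+j\geq n+2}\gamma(\tfrac12,\chi_i\psi_j^{-1})^{-1}\varepsilon(\tfrac12,\chi_i\psi_j^{-1})$, which has residual $\lambda,\nu$-dependence and does not give the stated answer. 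The LLSS comparison supplies precisely the inverse of those gamma factors (and is, in effect, the nontrivial content that makes the cancellation work), so it cannot be replaced by an appeal to the functional equation alone.
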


Instead of evaluating the integral for $\mathbf{A}_\lambda^\nu\mathbf{1}_\lambda$ directly, we relate the operators $A_{\xi,\lambda}^{\eta,\nu}$ to the local archimedean Rankin--Selberg integrals in Section~\ref{sec:EvaluationSphericalVector}, using recent results by Li--Liu--Su--Sun~\cite{LLSS23}. For $\GL(n+1)\times\GL(n)$, the local archimedean Rankin--Selberg integrals have been evaluated at the spherical vectors by Ishii--Stade~\cite{IS13}. This implies Theorem~\ref{thm:IntroC}. It also proves a conjecture by Frahm--Su~\cite[Conjecture 3.6]{FS21} for the special case $\GL(n+1)\times\GL(n)$. We further believe that the explicit relation between our operators $A_{\xi,\lambda}^{\eta,\nu}$ and the local Rankin--Selberg integrals has the potential to be used in the context of estimating Rankin--Selberg $L$-functions.

\subsection*{Outlook}

Having found the optimal normalization for $K_{\xi,\lambda}^{\eta,\nu}$, there are several natural follow-up questions. For instance, in Theorem~\ref{Alle nuller} we find a large set of $(\lambda,\nu)\in\CC^{n+1}\times\CC^n$ for which $\mathbf{K}_{\xi,\lambda}^{\eta,\nu}=0$. It is not clear to us how close this set is to the set of all zeros. A more refined question would be to determine the support of $\mathbf{K}_{\xi,\lambda}^{\eta,\nu}$ for all parameters. A first result in this direction is Proposition~\ref{prop:SupportOfK}.

In~\cite{W21} analogous operators for the pair $(G,H)=(\upO(1,n+1),\upO(1,n))$ were used to explicitly decompose the restriction of irreducible unitary representations of $G$ to $H$ into a direct integral of irreducible unitary representations of $H$. We plan to study similar questions for the pair $(G,H)=(\GL(n+1,\RR),\GL(n,\RR))$, first for $n=2$ and possibly also for $n>2$.

Finally, it would be interesting to see to which extent the methods used in this paper work for other pairs of groups $(G,H)$. We expect that for the multiplicity-one pairs $(\GL(n+1,\CC),\GL(n,\CC))$ rather analogous results can be achieved. Also, the multiplicity-one pairs of split orthogonal groups $(\upO(n+1,n),\upO(n,n))$ and $(\upO(n,n),\upO(n,n-1))$ have a good chance of behaving similarly, and so does the complex counterpart $(\upO(n+1,\CC),\upO(n,\CC))$. Some ideas might even work for general strongly spherical pairs (see \cite{F23}).

\subsection*{Notation}
We write $\mathds{1}$ for the vector having $1$'s in all entries. As usual, $e_i \in \RR^k$ denotes the $i$'th standard basis vector, $E_{i,j}$ is the matrix with zeros in all entries except the $(i,j)$-th entry which contains a one and $I_k$ is the $k\times k$ identity matrix. For $\mu\in\CC$ and $\varepsilon\in\ZZ/2\ZZ$ we write
$|x|^\mu_\varepsilon:=\sgn(x)^\varepsilon|x|^\mu$ ($x\in\RR^\times$). Moreover, for $\varepsilon\in\ZZ/2\ZZ$ we denote by $[\varepsilon]$ its unique representative in $\{0,1\}$. We write $\NN=\{1,2,\dots\}$ and $\NN_0=\NN\cup \{0\}$.

\subsection*{Acknowledgements}
Both authors were supported by a research grant from the Villum Foundation (Grant No. 00025373).

\section{Principal series representations and Knapp--Stein intertwining operators for general linear groups} \label{generelt PRR}

\noindent In this section we define principal series representations of $G=\GL(k,\RR)$ and discuss intertwining operators between them.

\subsection{Principal series representations}\label{sec:PrincipalSeries}

We fix the minimal parabolic subgroup $P\subseteq G$ of upper triangular matrices. Its unipotent radical $N$ is the subgroup of unipotent upper triangular matrices, i.e. those with diagonal entries equal to $1$. The diagonal matrices form a Levi subgroup of $P$ which we write as $MA$ with $M$ the diagonal matrices with $\pm1$ on the diagonal and $A$ the diagonal matrices with positive diagonal entries. Write $\fraka$ and $\frakn$ for the Lie algebras of $A$ and $N$.

The irreducible representations of $M$ are one-dimensional and given by the characters
\begin{equation*} M\rightarrow \{-1,1\}, \quad \text{diag}(\varepsilon_1,\dots,\varepsilon_{k}) \mapsto \varepsilon_1^{\xi_1}\dotsb \varepsilon_{k}^{\xi_{k}},\end{equation*}
where $\xi=(\xi_1,\dots,\xi_{k})\in(\ZZ/2\ZZ)^k$. We identify $\xi$ with the corresponding character of $M$. Furthermore, we make the identification $\mathfrak{a}_{\CC}^*\simeq \CC^{k}$ by mapping $\lambda\mapsto \big(\lambda(E_{1,1}),\dots, \lambda (E_{k,k})\big)$. Then $\rho=\frac{1}{2}\tr\ad|_\frakn$ corresponds to $\frac{1}{2}(k-1,k-3,\dots,3-k,1-k)$. For $\lambda\in\CC^k$ we write $e^\lambda$ for the character of $A$ given by $e^\lambda(e^H)=e^{\lambda(H)}$ ($H\in\fraka$).

For $\xi\in(\ZZ/2\ZZ)^k$ and $\lambda\in\fraka_\CC^*$ we extend the character $\xi\otimes e^\lambda$ of $MA$ trivially to $P=MAN$ and write $\xi\otimes e^\lambda\otimes1$ for it. Using smooth parabolic induction from $P$ to $G$ we obtain the principal series representation $\pi_{\xi,\lambda}=\Ind_P^G(\xi\otimes e^\lambda\otimes 1)$ as the left-regular representation of $G$ on 
\[
\{f\in C^\infty(G)\,|\,f(gman)=\xi(m)^{-1}a^{-\lambda-\rho}f(g)\,\,\forall man\in MAN\}.
\]

The representation $\pi_{\xi,\lambda}$ is irreducible if for all $1\leq i,j\leq k$ (see \cite{S77})
$$ \lambda_i-\lambda_j\not\in\begin{cases}2\ZZ+1&\mbox{if $\xi_i=\xi_j$,}\\2\ZZ\setminus\{0\}&\mbox{if $\xi_i\neq\xi_j$.}\end{cases} $$

We fix the maximal compact subgroup $K=\upO(n)$ and write $\frakk=\so(n)$ for its Lie algebra. The Weyl group $W:=N_K(A)/Z_K(A)$ can be identified with the symmetric group $\mathbb{S}_k$ in terms of permutation matrices. It acts on $\widehat{M}$ by $[w\xi](m)=\xi(\tilde{w}^{-1}m\tilde{w})$ ($m\in M$) where $w=[\tilde{w}]\in W$ and similarly on $\fraka_\CC^*$ by $[w\lambda](H)=\lambda(\Ad(\tilde{w})^{-1}H)$ ($H\in\fraka_\CC^*$). Identifying $w\in W$ with the corresponding permutation, the action on $\widehat{M}\simeq(\ZZ/2\ZZ)^k$ and $\fraka_\CC^*\simeq\CC^k$ is given by permuting the entries of vectors in $(\ZZ/2\ZZ)^k$ and $\CC^k$. Abusing notation we do not distinguish between elements of $W$, permutations and the corresponding permutation matrices (i.e. $we_i=e_{w(i)}$).

There exists a unique (up to scalar multiples) $G$-invariant integral $d(gP)$ on the space of sections of the bundle $G\times_P\CC_{2\rho}\to G/P$, where $\CC_{2\rho}=1\otimes e^{2\rho}\otimes1$ is the one-dimensional representation corresponding to the modular function of $P$ (see e.g. \cite[Section 1]{CF24} for a thorough discussion). This integral gives rise to a $G$-invariant pairing
\begin{equation}
    \pi_{\xi,\lambda}\times\pi_{\xi,-\lambda}\to\CC, \quad (f_1,f_2)\mapsto\int_{G/P}f_1(g)f_2(g)\,d(gP).\label{eq:InvariantPairing}
\end{equation}
Using the Iwasawa decomposition $G=KAN$ or the open dense Bruhat cell $\overline{N}MAN$ with $\overline{N}$ the opposite unipotent radical consisting of lower triangular unipotent matrices, this pairing can also be written as
\begin{equation}
    \int_{G/P}f_1(g)f_2(g)\,d(gP) = \int_{K}f_1(k)f_2(k)\,dk = \int_{\overline{N}}f_1(\overline{n})f_2(\overline{n})\,d\overline{n},\label{eq:GinvariantIntegralKNbar}
\end{equation}
where $dk$ and $d\overline{n}$ are suitably normalized Haar measures on $K$ and $\overline{N}$ (cf. \cite[formulas (5.17) and (5.25)]{K16}).

\subsection{Knapp--Stein intertwining operators}\label{sec:StdIntertwiners}

For every $w=[\tilde{w}]\in W$ and $f\in\pi_{\xi,\lambda}$ the integral 
\[
T_{\xi,\lambda}^wf(g)=\int_{\overline{N}\cap \tilde{w}^{-1}N\tilde{w}}f(g\tilde{w}\overline{n})\,d\overline{n} \qquad (g\in G),
\]
converges absolutely if $\Re(\lambda_i)>\Re(\lambda_i)$ whenever $w(i)>w(j)$, $i<j$, and defines an intertwining operator
$$ T_{\xi,\lambda}^w:\pi_{\xi,\lambda}\to \pi_{w\xi,w\lambda} $$
known as the Knapp--Stein intertwining operator (see e.g. \cite[Proposition 7.8]{K16}). It can be shown that the family of operators $T_{\xi,\lambda}^w$ has a meromorphic extension to all $\lambda\in\fraka_\CC^*$ (if viewed as an intertwining operator in the compact picture where the representation space is independent of $\lambda$). Whenever defined, they satisfy
\begin{equation}\label{Knapp-Stein}
	T^{w'w}_{\xi,\lambda}=T^{w'}_{w\xi,w\lambda}\circ T^{w}_{\xi,\lambda}
\end{equation}
for all $w,w'\in W$ with $\ell(w'w)=\ell(w')+\ell(w)$, where $\ell$ denotes the length of an element in $W$ (see e.g. \cite[Proposition 7.10]{K16}). Moreover, with respect to the $G$-invariant pairing of $\pi_{\xi,\lambda}$ and $\pi_{\xi,-\lambda}$ discussed in Section~\ref{sec:PrincipalSeries}, the adjoint of $T_{\xi,\lambda}^w$ is given by $T_{w(\xi,-\lambda)}^{w^{-1}}$ (see \cite[Proposition 14.10]{K16}).

We therefore have a closer look at the special case of simple transpositions. For $i=1,\dots,k-1$, let $w_i$ denote the simple transposition swapping $i$ and $i+1$, considered as a permutation matrix. Then $\overline{N}\cap w_i^{-1}Nw_i=\{\overline{n}_i(x):x\in\RR\}$, where $\overline{n}_i(x)$ is the $\GL(2,\RR)$ element
\[
\begin{pmatrix}
    1 & 0\\
    x & 1
\end{pmatrix}
\]
in $\GL(k,\RR)$ using the embedding
\[
\Bigg(\begin{array}{c|c|c}
	I_{i-1} & & \\
	\hline
	&GL(2,\RR) & \\
	\hline 
	& & I_{k-i-1}
\end{array}\Bigg).
\]
Using the $\GL(2,\RR)$ computation 
\[
\begin{pmatrix}
	0 & 1\\
	1 & 0
\end{pmatrix}
\begin{pmatrix}
	1& 0\\
	x & 1
\end{pmatrix}
=
\begin{pmatrix}
	x & 1\\
	1 & 0
\end{pmatrix}=
\begin{pmatrix}
	1 & 0\\
	\frac{1}{x} & 1
\end{pmatrix}
\begin{pmatrix}
	x & 0\\
	0 &-\frac{1}{x}
\end{pmatrix}
\begin{pmatrix}
	1 & \frac{1}{x}\\
	0 & 1
\end{pmatrix}\qquad (x\neq 0),
\]
we can decompose $w_i\overline{n}_i(x)$ according to the decomposition $\overline{N}MAN$. This allows us to get a more explicit formula for the Knapp--Stein intertwiner:
\begin{equation}
		T^{w_i}_{\xi,\lambda}f(g)
		=\int_\RR f(gw_i\overline{n}_i(x))\,dx
		=(-1)^{\xi_{i+1}}\int_{\R}|x|^{\lambda_{i}-\lambda_{i+1}-1}_{\xi_i+\xi_{i+1}}f(g\overline{n}_i(x))\,dx,\label{eq:SimpleTranspositionKSonNbar}
\end{equation}
where we have applied the change of variables $x\to x^{-1}$ in the last step. This integral has simple poles as a function of $\lambda_i$ and $\lambda_j$ and we therefore renormalize it by dividing by a meromorphic function with simple poles at the same places. This is most easily expressed in terms of archimedean local $L$-factors. Using the notation $\chi_{\varepsilon,\mu}(x)=|x|_\varepsilon^\mu=\sgn(x)^\varepsilon|x|^\mu$ ($x\in\RR^\times$) for a character $\chi_{\varepsilon,\mu}$ of $\RR^\times$ ($\varepsilon\in\ZZ/2\ZZ$, $\mu\in\CC$), the archimedean local $L$-factor $L(s,\chi_{\varepsilon,\mu})$ is defined by
\begin{equation}
    L(s,\chi_{\varepsilon,\mu}) = \pi^{-\frac{s+\mu+[\varepsilon]}{2}}\Gamma\left(\frac{s+\mu+[\varepsilon]}{2}\right) \qquad (s\in\CC),\label{eq:DefLocalLFactor}
\end{equation}
where $[\varepsilon]$ denotes the unique representative of $\varepsilon\in\ZZ/2\ZZ$ in $\{0,1\}$. It is a meromorphic function in $s\in\CC$ and also depends meromorphically on the character $\chi_{\varepsilon,\mu}\in\Hom_{\textup{grp}}(\RR^\times,\CC)$. Putting $\chi_i=\chi_{\xi_i,\lambda_i}$, we define the renormalized intertwining operators by
\[
\mathbf{T}_{\xi,\lambda}^{w_i}=\frac{1}{L(0,\chi_i\chi_{i+1}^{-1})}T_{\xi,\lambda}^{w_i}.
\]
With this renormalization the family $\mathbf{T}_{\xi,\lambda}^{w_i}$ becomes holomorphic in $\lambda\in\CC^k$ and nowhere vanishing (follows for instance from \cite[Proposition 7]{M97} or \cite[Prop 2.3]{BD23}). For a general $w\in W$ we normalize $T^w_{\xi,\lambda}$ by writing $w=w_{i_1}\cdots w_{i_r}$ with $r=\ell(w)$ and then using the normalization for each $T^{w_i}_{\xi,\lambda}$ in \eqref{Knapp-Stein}.

Similarly we can decompose $\overline{n}_i(x)=k(x)a(x)n(x)$ with $k(x)\in K$, $a(x)\in A$ and $n(x)\in N$ by the $\GL(2,\RR)$-computation 
\begin{equation} \label{iwasawa}
	\begin{pmatrix}
		1 & 0 \\
		x & 1
	\end{pmatrix}=\bigg [\frac{1}{\sqrt{1+x^2}}\begin{pmatrix}
		1 & -x\\
		x & 1
	\end{pmatrix}\bigg ]\begin{pmatrix}
		\sqrt{1+x^2} & 0\\
		0& \frac{1}{\sqrt{1+x^2}}
	\end{pmatrix}
\begin{pmatrix}
		1 & \frac{x}{1+x^2}\\
		0 & 1
	\end{pmatrix}.
\end{equation}
This gives the following alternative expression for the intertwining operator $T_{\xi,\lambda}^{w_i}$:
\begin{align*}
	T^{w_i}_{\xi,\lambda}f(g)&=\int_\RR (1+x^2)^{\frac{\lambda_{i+1}-\lambda_i-1}{2}}f(gw_ik(x))\,dx.	
\end{align*}

\section{Symmetry breaking operators and distribution kernels} \label{PSR for GL}

\noindent In this section we recall the description of symmetry breaking operators between principal series representations of $G=\GL(n+1,\RR)$ and $H=\GL(n,\RR)$ in terms of distribution kernels and introduce the explicit family of operators/kernels that is the main object studied in this paper.

\subsection{Symmetry breaking kernels}\label{sec:SymmetryBreakingKernels}

The pair $(G,H)=(\GL(n+1,\R),\GL(n,\R))$ is a strong Gelfand pair, i.e. $\dim\Hom_H(\pi|_H,\tau)\leq1$ for all irreducible smooth admissible Fréchet representations $\pi$ of $G$ and $\tau$ of $H$ of moderate growth (see \cite[Theorem B]{SZ12}). Elements of the space $\Hom_H(\pi|_H,\tau)$ are referred to as \emph{symmetry breaking operators}.

Writing $\pi_{\xi,\lambda}$ ($\xi\in(\ZZ/2\ZZ)^{n+1}$, $\lambda\in\CC^{n+1}$) for the principal series representations of $G$ and $\tau_{\eta,\nu}$ ($\eta\in(\ZZ/2\ZZ)^n$, $\nu\in\CC^n$) for the principal series representations of $H$ as in Section~\ref{generelt PRR}, the space $\Hom_H(\pi_{\xi,\lambda}|_H,\tau_{\eta,\nu})$ is therefore at most one-dimensional for generic $\lambda$ and $\nu$. 

Denote by $P_G=M_GA_GN_G$ and $P_H=M_HA_HN_H$ the minimal parabolic subgroups pf $G$ and $H$ defined in Section~\ref{generelt PRR}. Following \cite{F23}, we identify
$$ \Hom_H(\pi_{\xi,\lambda}|_H,\tau_{\eta,\nu}) \simeq \calD'(G)_{\xi,\lambda}^{\eta,\nu}, $$
where
\begin{multline*}
	\calD'(G)_{\xi,\lambda}^{\eta,\nu} = \{K\in\calD'(G):K(m_Ha_Hn_Hgm_Ga_Gn_G)=\xi(m_G)\eta(m_H)a_G^{\lambda-\rho_G}a_H^{\nu+\rho_H}K(g)\\\mbox{for all }m_Ga_Gn_G\in P_G,m_Ha_Hn_H\in P_H,g\in G\}
\end{multline*}
in the sense that a distribution $K\in\calD'(G)_{\xi,\lambda}^{\eta,\nu}$ defines a symmetry breaking operator $A\in\Hom_H(\pi_{\xi,\lambda}|_H,\tau_{\eta,\nu})$ by
\begin{equation}
    Af(h) = \int_{G/P_G} K(h^{-1}g)f(g)\, d(gP_G) \qquad (f\in\pi_{\xi,\lambda},h\in H).\label{eq:SBOinTermsOfKernel}
\end{equation}
Here, the integral has to be understood in the distribution sense using the $G$-invariant integral $d(gP_G)$ on sections of the bundle $G\times_{P_G}\CC_{2\rho_G}\to G/P_G$ (see Section~\ref{sec:PrincipalSeries}). Note that by the equivariance of $K$ and $f$, the distribution $g\mapsto K(h^{-1}g)f(g)$ is indeed a distributional section of this bundle. Note that by \eqref{eq:GinvariantIntegralKNbar} we can also write
\begin{equation}
    Af(h) = \int_{K_G} K(h^{-1}k)f(k)\,dk = \int_{\overline{N}_G} K(h^{-1}\overline{n})f(\overline{n})\,d\overline{n} \qquad (f\in\pi_{\xi,\lambda},h\in H),\label{eq:SBOasIntegral}
\end{equation}
again interpreting integrals in the distribution sense.

\subsection{An explicit family of kernels} \label{sec: kernel}

\noindent Following \cite{F23}, we consider for $1\leq p\leq n+1$ and $1\leq q\leq n$ the functions $$\widetilde{\Phi_p}(g)=\det\big ((g_{ij})_{1\leq i,j\leq p})\quad \text{and}\quad \widetilde{\Psi}_q(g)=\det\big( (g_{ij})_{2\leq i\leq q+1, 1\leq j\leq q}\big ) \qquad (g\in G). $$
Let
$$w_0=\begin{pmatrix}
	& & 1\\
	& \iddots &\\
	1 & & 
\end{pmatrix}, $$
be a representative of the longest Weyl group element of $G$ and set $\Phi_p(g)=\widetilde{\Phi}_p(w_0g)$ and $\Psi_q(g)=\widetilde{\Psi}_q(w_0g)$. 
Now consider the kernel 
$$K_{\xi,\lambda}^{\eta,\nu}(g)=|\Phi_1(g)|_{\delta_1}^{s_1}\dotsb|\Phi_{n+1}(g)|_{\delta_{n+1}}^{s_{n+1}}|\Psi_1(g)|_{\varepsilon_1}^{t_1}\dotsb |\Psi_n(g)|_{\varepsilon_n}^{t_n},\qquad (g\in G)$$
where $s_i=\lambda_i-\nu_{n+1-i}-\frac{1}{2}$, $t_i=\nu_{n+1-i}-\lambda_{i+1}-\frac{1}{2}$ for $i=1,\dots,n$ and $s_{n+1}=\lambda_{n+1}+\frac{n}{2}$. Likewise $\delta_i=\xi_i-\eta_{n+1-i}$, $\varepsilon_i=\eta_{n+1-i}-\xi_{i+1}$ for $i=1,\dots,n$ and $\delta_{n+1}=\xi_{n+1}$. The exponents $(s,t)$ are related to $(\lambda,\nu)\in \CC^{n+1}\times\CC^n$ by an invertible affine linear coordinate transformation given by
$$ \begin{pmatrix}
	s_1\\
	t_1\\
	\vdots\\
	t_n\\
	s_{n+1}
\end{pmatrix}=\begin{pmatrix}
	1 & -1 & 0& \dotsb &0 & 0\\
	0 & 1 &-1 &\dotsb & 0 & 0\\
	\vdots & \vdots &\ddots &\ddots & \vdots&\vdots \\
	0 & 0 & 0 & \dotsb & 1 &-1\\
	0 & 0 & 0 & \dotsb &0 & 1
\end{pmatrix} \begin{pmatrix}
	\lambda_1\\
	\nu_n\\
	\lambda_2\\
	\vdots\\
	\lambda_{n+1}
\end{pmatrix}-\frac{1}{2}\begin{pmatrix}
	1\\
	1\\
	\vdots\\
	1\\
	-n
\end{pmatrix}$$
with inverse 
$$
\begin{pmatrix}
	\lambda_1\\
	\nu_n\\
	\lambda_2\\
	\vdots\\
	\lambda_{n+1}
\end{pmatrix}
=\begin{pmatrix}
	1 & 1 & 1& \dotsb &1 & 1\\
	0 & 1 &1 &\dotsb & 1 & 1\\
	\vdots & \vdots &\ddots &\ddots & \vdots&\vdots \\
	0 & 0 & 0 & \dotsb & 1 &1\\
	0 & 0 & 0 & \dotsb &0 & 1
\end{pmatrix} \begin{pmatrix}
	s_1\\
	t_1\\
	\vdots\\
	t_n\\
	s_{n+1}
\end{pmatrix} +\frac{1}{2}\begin{pmatrix}
	n\\
	n-1\\
	\vdots\\
	-(n-1)\\
	-n
\end{pmatrix}.$$
The same coordinate transformation relates $(\xi,\eta)$ and $(\delta,\varepsilon)$ if we disregard the affine part. We will refer to the parameters $(\xi,\lambda,\eta,\nu)$ as \emph{principal series parameters} and to $(\delta,s,\varepsilon,t)$ as \emph{spectral parameters}.

\begin{proposition}
    The function $K_{\xi,\lambda}^{\eta,\nu}$ is locally integrable on $G$ if and only if $\Re(\lambda_i-\nu_j+\frac{1}{2})>0$ for all $i+j\leq n+1$ and $\Re(\nu_j-\lambda_i+\frac{1}{2})>0$ for all $i+j>n+1$.
\end{proposition}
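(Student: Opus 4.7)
The plan is to exploit the $(P_H \times P_G)$-equivariance of $K_{\xi,\lambda}^{\eta,\nu}$ to reduce the question of local integrability on $G$ to local integrability on the open Bruhat cell $\bar{N}_G P_G$ (and, if needed, on a few lower-dimensional cells $\bar{N}_G w P_G$). Since the map $\bar{N}_G \times P_G \to \bar{N}_G P_G$ is a diffeomorphism and $K(\bar{n}\, m_G a_G n_G) = a_G^{\lambda+\rho_G} K(\bar{n})$ up to a bounded character factor, Fubini reduces integrability on the open cell to local integrability of the polynomial-built function $\bar{n} \mapsto K(\bar{n})$ on $\bar{N}_G$. Parametrizing $\bar{N}_G$ by the strictly lower triangular entries $(x_{ij})_{i>j}$, the minors $\Phi_p(\bar{n})$, $\Psi_q(\bar{n})$ become polynomials in the $x_{ij}$, and in particular $\Phi_{n+1}(\bar{n}) = \det(\bar{n}) = 1$ and $\Psi_n(\bar{n}) = 1$ (both relevant submatrices being themselves lower unipotent), so
\[
K_{\xi,\lambda}^{\eta,\nu}(\bar{n}) = \prod_{p=1}^{n}|\Phi_p(\bar{n})|^{s_p}_{\delta_p}\prod_{q=1}^{n-1}|\Psi_q(\bar{n})|^{t_q}_{\varepsilon_q}.
\]

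Next I would analyze integrability of this product by stratifying $\bar{N}_G$ according to the common vanishing loci of the minors and applying a homogeneous scaling argument: along a codimension-$c$ stratum where a subset $S$ of the minors vanish simultaneously to first order, rescaling the transverse directions by a common $\epsilon$ shows that integrability near the stratum is controlled by $\Re\bigl(\sum_{p\in S_\Phi} s_p + \sum_{q\in S_\Psi} t_q\bigr) > -c$. The codimension-$1$ strata $\{\Phi_p = 0\}$ and $\{\Psi_q = 0\}$ yield the individual conditions $\Re(s_p), \Re(t_q) > -1$, corresponding to pairs $(i,j)$ with $i+j \in \{n+1,n+2\}$. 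For each of the remaining pairs $(i,j)$ with $i+j < n+1$, I expect to identify a specific higher-codimension stratum (where several consecutive $\Phi$'s and $\Psi$'s vanish together, forced by the nested determinantal structure) whose scaling inequality translates, under the affine change of variables between $(s,t)$ and $(\lambda,\nu)$, into $\Re(\lambda_i - \nu_j + \tfrac{1}{2}) > 0$. For pairs with $i+j > n+1$ (which involve $\lambda_{n+1}$ and $\nu_j$ only through $t_q$'s absent on $\bar{N}_G$ because $\Psi_n \equiv 1$), the same analysis is repeated on another Bruhat chart $w\bar{N}_G$, e.g.\ $w = s_n$ (the transposition of the last two rows), where $\Psi_n(w\bar{n})$ becomes a non-trivial polynomial and its joint vanishing strata produce the missing inequalities. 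As a consistency check, for $n=2$ a direct calculation on the chart $s_2\bar{N}_G$ gives $K(s_2\bar{n}) = |x_{21}|^{s_1}|x_{31}-x_{21}x_{32}|^{s_2}|x_{31}|^{t_1}|x_{32}|^{t_2}$, and its four codimension-$1$ and two codimension-$2$ strata recover all six conditions of the proposition.

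The main obstacle is the combinatorial bookkeeping for general $n$: one must identify, for each of the $n(n+1)$ pairs $(i,j)$, a stratum (possibly on different Bruhat charts) whose defining collection of vanishing minors has precisely the exponent sum $\lambda_i - \nu_j - \tfrac{1}{2}(c-1)$ or its sign-flipped analogue. This seems to require systematically exploiting Plücker/Laplace relations among the nested minors $\Phi_p, \Psi_q$, and verifying that the ranks of the corresponding submatrices drop in the expected pattern. Once the strata are identified, sufficiency on each chart follows from Fubini after a local change of variables that replaces a subset of the $x_{ij}$ by minors (as in the explicit $n=2$ diffeomorphism $(x_{21},x_{31},x_{32}) \mapsto (\Psi_1, \Phi_1, \Phi_2)$ off $\{\Psi_1 = 0\}$), and necessity follows by isolating the corresponding stratum and exhibiting an explicit divergent Fubini slice whenever a condition fails.
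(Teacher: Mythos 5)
Your proposal takes a genuinely different route from the paper. The paper proves sufficiency by relating $K_{\xi,\lambda}^{\eta,\nu}$ to the orbital integral form $\calO_{\xi,\lambda}^{\eta,\nu}(f_1\otimes f_2)=\int_H f_1(hz_0)f_2(h)\,dh$ (Proposition~\ref{prop:ComparisonOvsSBO}) and then invokes the convergence result \cite[Proposition 1.4]{LLSS23}; necessity is deduced \emph{a posteriori} from the spherical evaluation of Theorem~\ref{thm:EvaluationSphericalVector} and the location of the poles of the normalizing gamma factors. Your approach is instead a direct, self-contained analysis of the kernel on Bruhat charts $w\overline{N}_G$, extracting local integrability from a scaling estimate near the common vanishing loci of the minors. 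That would have the advantage of not depending on the $\GL\times\GL$ zeta-integral machinery of \cite{LLSS23} and of establishing both directions by elementary means, and the $n=1$ and $n=2$ verifications you carry out are correct.

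However, as you acknowledge, the key combinatorial step is left open, and it is genuinely nontrivial. For each $(i,j)$ with $i+j<n+1$ the target inequality translates to $\Re(s_i+t_i+\cdots+s_{n+1-j})>-(n-i-j+2)$, so you need a stratum on some chart along which precisely the $2(n-i-j)+3$ minors $\Phi_i,\Psi_i,\Phi_{i+1},\ldots,\Phi_{n+1-j}$ vanish while the codimension is only $n-i-j+2$; that is, you must prove that $n-i-j+1$ determinantal relations cut the naive codimension down. In your $n=2$ example this drop is visible because $\Phi_2(\overline{n})=\Psi_1(\overline{n})x_{32}-\Phi_1(\overline{n})$ lies in the ideal generated by $\Phi_1,\Psi_1$, but for general $n$ this is a structural claim about the nested minors that would need a proof (presumably via Plücker/Laplace relations, as you say). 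You would also have to verify that no other stratum, on any chart $w\overline{N}_G$, produces a strictly stronger inequality, and that the scaling heuristic is sharp — i.e.\ that each relevant minor vanishes exactly to first order and that a monomializing change of variables (the analogue of your $(x_{21},x_{31},x_{32})\mapsto(\Psi_1,\Phi_1,\Phi_2)$) exists in every chart so that Fubini actually closes the sufficiency argument, and an explicit divergent slice closes necessity. As written the proposal is a plausible and attractive alternative strategy, but it is an outline rather than a proof.
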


\begin{proof}
    In Proposition~\ref{prop:ComparisonOvsSBO} we relate the kernel $K_{\xi,\lambda}^{\eta,\nu}$ to an invariant form on $\pi_{\xi,\lambda}\otimes\tau_{\eta,-\nu}$ defined by an integral over $H$. This integral was studied in \cite{LLSS23}, and \cite[Proposition 1.4]{LLSS23} shows that the above condition on $(\lambda,\nu)$ is sufficient for its convergence. That the condition is also necessary follows from Theorem~\ref{thm:EvaluationSphericalVector} and the explicit gamma factors used in the normalization. (Note that only the sufficient condition is used in what follows.)
\end{proof}

A short computation shows that the kernel $K_{\xi,\lambda}^{\eta,\nu}$ has the desired equivariance properties, so it belongs to $\calD'(G)_{\xi,\lambda}^{\eta,\nu}$ whenever its locally integrable. In this region, $K_{\xi,\lambda}^{\eta,\nu}$ depends analytically on $s_i$ and $t_i$ (hence on $\lambda$ and $\nu$), and it was shown in \cite[Section 7]{F23} that it extends to a family of distributions depending meromorphically on $(\lambda,\nu)\in\CC^{n+1}\times\CC^n$. We write $A_{\xi,\lambda}^{\eta,\nu}: \pi_{\xi,\lambda}|_H\to \tau_{\eta,\nu}$ for the corresponding meromorphic family of intertwining operators. Abusing notation, we sometimes write $K_{\delta,s}^{\varepsilon,t}$ and $A_{\delta,s}^{\varepsilon,t}$ instead of $K_{\xi,\lambda}^{\eta,\nu}$ and $A_{\xi,\lambda}^{\eta,\nu}$. The aim of the following sections is to find a normalization for $A_{\xi,\lambda}^{\eta,\nu}$ so that it becomes holomorphic in $(\lambda,\nu)\in\CC^{n+1}\times\CC^n$.

\section{The case \texorpdfstring{$n=1$}{n=1}}\label{Eksempel}

\noindent In this section we discuss the proof of analytical continuation in the case where $n=1$, that is where $(G,H)=(\GL(2,\RR), \GL(1,\RR))$. This case is covered in the general proof in Sections \ref{Section funktionalligning}, \ref{section:BS-identities} and \ref{Section: analytic extension} but it showcases the main ideas avoiding some technical details.

Let $n=1$, then the Weyl group of $G$ consists of two elements $1$ and $w$. For the principal series representation $\pi_{\xi,\lambda}$ with parameters $\xi=(\xi_1,\xi_2)\in (\ZZ/2\ZZ)^2$ and $\lambda=(\lambda_1,\lambda_2)\in \CC^2$ we have the intertwining operator $T_{\xi,\lambda}^w:\pi_{\xi,\lambda}\to \pi_{w(\xi,\lambda)}$ given by 
\[
T_{\xi,\lambda}^wf(g)=(-1)^{\xi_2}\int_\RR |x|^{\lambda_1-\lambda_2-1}_{\xi_1+\xi_2}f\big( g\big (\begin{smallmatrix}
    1 & 0\\
    x  & 1
\end{smallmatrix}\big)\big) dx,\quad (g\in G),
\]
where $w\xi=(\xi_2,\xi_1)$ and $w\lambda=(\lambda_2,\lambda_1)$. As $H=\GL(1,\RR)=\RR^\times$, the principal series representations $\tau_{\eta,\nu}$ are one-dimensional and depends on $\eta \in \ZZ/2\ZZ$ and $\nu \in \CC$ with no standard intertwining operators between them. The space of equivariant distributional kernels $\calD'(G)_{\xi,\lambda}^{\eta,\nu}$ is generically one-dimensional and spanned by the distribution
\[
K_{\xi,\lambda}^{\eta,\nu}(g)=|\Phi_1(g)|^{s_1}_{\delta_1}|\Psi_1(g)|^{t_1}_{\varepsilon_1}|\Phi_2(g)|^{s_2}_{\delta_2}=|g_{21}|^{\lambda_1-\nu-\frac{1}{2}}_{\xi_1+\eta}|g_{11}|^{\nu-\lambda_2-\frac{1}{2}}_{\eta+\xi_2}|\det(g)|^{\lambda_2+1}_{\xi_2}.
\]
This kernel is locally integrable whenever $\Re(s_1)>-1$, $\Re(t_1)>-1$. The goal is to analytically extend it as a distribution to all $(s,t)\in \CC^2\times \CC$. To ease notation we suppress $(\xi,\eta)$ and $(\delta,\varepsilon)$ in the notation. We focus on the analytic extension in the parameter $t=t_1$. For this, we are searching for so-called \emph{Bernstein--Sato identities} which consist of a differential operator $\calD(s,t)$ on $G$ depending holomorphically on $(s,t)$ and a polynomial $p(s,t)$ such that 
\[
\calD(s,t)K_{s_1,s_2}^t=p(s,t)K_{s_1,s_2+1}^{t-1}.
\]

\subsection{Conjugation of multiplication operators by Knapp--Stein operators}

Consider the multiplication map $M$ which maps a distribution $u$ on $G$ to $\Phi_1\cdot u$. We can consider this as a map on $\calD'(G)_{\lambda_1,\lambda_2}^{\nu}$ by
\[
M:\calD'(G)_{\lambda_1,\lambda_2}^{\nu}\to \calD'(G)_{\lambda_1+1,\lambda_2}^{\nu},
\]
since $\Phi_1(g)=g_{21}$ has the equivariance properties $\Phi_1(hgman)=x_1\Phi_1(g)$ for $ma=\diag(x_1,x_2)$, $n\in N_G$ and $h\in H$.
For $\Re(s_1),\Re(t)>-1$ the function $g\mapsto K_{\lambda_1,\lambda_2}^{\nu}(g)$  can be considered as a distribution vector in $\pi_{-\lambda}$, and by 
extending $T_{-\lambda}^w$ to distribution vectors by duality we obtain a map
\begin{equation}
    T_{-\lambda}^w:\calD'(G)_{\lambda_1,\lambda_2}^{\nu}\to \calD'(G)_{\lambda_2,\lambda_1}^{\nu}.\label{eq:KSonSBOkernelsForN=1}
\end{equation}
We get a new map $\calD(\lambda,\nu):\calD'(G)_{\lambda_1,\lambda_2}^\nu\to \calD'(G)_{\lambda_1,\lambda_2+1}^\nu$ by composing these maps in the following way
\[
\begin{tikzcd}
{\calD'(G)_{\lambda_1,\lambda_2}^{\nu}} \arrow[d, "{T_{-\lambda}^w}"'] \arrow[r, dashed, "{\calD(\lambda,\nu)}"] & {\calD'(G)_{\lambda_1,\lambda_2+1}^{\nu}}                                                \\
{\calD'(G)_{\lambda_2,\lambda_1}^{\nu}} \arrow[r, "M"']                        & {\calD'(G)_{\lambda_2+1,\lambda_1}^{\nu}}. \arrow[u, "{T_{-\lambda_2-1,-\lambda_1}^w}"']
\end{tikzcd}
\]
Note that the transformation $(\lambda_1,\lambda_2,\nu)\mapsto(\lambda_1,\lambda_2+1,\nu)$ corresponds to $(s_1,s_2,t)\mapsto(s_1,s_2+1,t-1)$, so while the multiplication map $M$ increases the spectral parameter $s_1$ by one, its composition with Knapp--Stein operators decreases the spectral parameter $t$ by one and hence makes it possible to analytically extend from $\{\Re(t)>0\}$ to $\{\Re(t)>-1\}$. Abusing notation, we also write $\calD(s,t)$ for $\calD(\lambda,\nu)$.

Since $\calD'(G)_{\lambda_1,\lambda_2}^\nu$ is generically of dimension one there exists a meromorphic function $p_{\calD}(s,t)$ such that 
\[
\calD(s,t)K_{s_1,s_2}^t=p_{\calD}(s,t)K_{s_1,s_2+1}^{t-1}.
\]
We now show that $\calD(s,t)$  is in fact a differential operator and $p_{\calD}(s,t)$ is a polynomial, which means we have established the Bernstein--Sato identity. 

\subsection{Computation of the differential operator} \label{eksempel diff operator}

We start out by computing the composition $T_{-\lambda_2-1,-\lambda_1}^w\circ M$ using \eqref{eq:SimpleTranspositionKSonNbar}:
\begin{align*}\label{induktionsstart}
    [T_{-\lambda_2-1,-\lambda_1}^w\circ Mf](g)&=(-1)^{\xi_1}\int_\RR |x|^{\lambda_1-\lambda_2-2}_{\xi_1+\xi_2+1}(\Phi_1f)\big( g\big (\begin{smallmatrix}
    1 & 0\\
    x  & 1
\end{smallmatrix}\big)\big)\,dx\\
&=(-1)^{\xi_1}\int_\RR |x|^{\lambda_1-\lambda_2-2}_{\xi_1+\xi_2+1}(g_{21}+xg_{22})f\big ( g\big (\begin{smallmatrix}
    1 & 0\\
    x  & 1
\end{smallmatrix}\big)\big)\,dx.
\end{align*}
If we split the integral into the two terms coming from $g_{21}+xg_{22}$, we see that the term containing $xg_{22}$ is just $g_{22}T_{-w\lambda}^wf(g)$. For the term containing $g_{21}$ we write
$$ |x|^{\lambda_1-\lambda_2-2}_{\xi_1+\xi_2+1}=(\lambda_1-\lambda_2-1)^{-1}\frac{\partial}{\partial x}|x|^{\lambda_1-\lambda_2-1}_{\xi_1+\xi_2}, $$
use integration by parts and $\frac{\partial}{\partial x}f\big ( g\big (\begin{smallmatrix}
    1 & 0\\
    x  & 1
\end{smallmatrix}\big)\big)=(g_{12}\partial_{g_{11}}+g_{22}\partial_{g_{21}})f\big ( g\big (\begin{smallmatrix}
    1 & 0\\
    x  & 1
\end{smallmatrix}\big)\big)$ and find that 
\begin{align*}
(-1)^{\xi_1}g_{21}\int_{\RR}|x|^{\lambda_1-\lambda_2-2}_{\xi_1+\xi_2+1}f\big ( g\big (\begin{smallmatrix}
    1 & 0\\
    x  & 1
\end{smallmatrix}\big)\big)\,dx&=(-1)^{\xi_1+1}\frac{g_{21}}{\lambda_1-\lambda_2-1}\int_\RR |x|^{\lambda_1-\lambda_2-1}_{\xi_1+\xi_2}\frac{d}{dx}f\big ( g\big (\begin{smallmatrix}
    1 & 0\\
    x  & 1
\end{smallmatrix}\big)\big)\,dx\\
&=-\frac{g_{21}(g_{12}\partial_{g_{11}}+g_{22}\partial_{g_{21}})}{\lambda_1-\lambda_2-1}T_{-w\lambda}^wf(g).
\end{align*}
Thus, if we slightly change the definition of $\calD(\lambda,\nu)$ to be $(\lambda_1-\lambda_2-1)T_{-\lambda_2-1,-\lambda_1}^w\circ M\circ (T_{-w\lambda}^w)^{-1}$, we see that it is given by
\[
\calD(\lambda,\nu)=(\lambda_1-\lambda_2-1)g_{22}-g_{21}(g_{12}\partial_{g_{11}}+g_{22}\partial_{g_{21}}),
\]
which indeed is a differential operator depending holomorphically on $\lambda$ and $\nu$.

\subsection{Finding the polynomial}\label{eksempel funktionalligning}

One possible way to obtain the polynomial $p_{\calD}(s,t)$ in the Bernstein--Sato identity would be to simply apply $\calD(\lambda,\nu)$ to $K_{\lambda_1,\lambda_2}^\nu$ in the range of parameters where $K_{\lambda_1,\lambda_2}^\nu$ is sufficiently many times differentiable. An alternative way is to compute the constant $d(\lambda,\nu)$ in the identity
\begin{equation}
    T_{-\lambda}^wK_{\lambda_1,\lambda_2}^\nu=d(\lambda,\nu)K_{\lambda_2,\lambda_1}^\nu\label{eq:FunctionalEqForN=1}
\end{equation}
(which holds by \eqref{eq:KSonSBOkernelsForN=1} and the property that generically $\dim\calD'(G)_{\lambda_1,\lambda_2}^\nu=1$) and then use the definition of $\calD(\lambda,\nu)$ given in terms of composition of $M$ with the Knapp--Stein intertwiners. In view of \eqref{eq:SimpleTranspositionKSonNbar} we have
\begin{align*}
    T_{-\lambda}^wK_{\lambda_1,\lambda_2}^\nu(g)&=(-1)^{\xi_2}\int_\RR |x|^{\lambda_2-\lambda_1-1}_{\xi_1+\xi_2}|g_{21}+xg_{22}|^{\lambda_1-\nu-\frac{1}{2}}_{\xi_1+\eta}|g_{11}+xg_{12}|^{\nu-\lambda_2-\frac{1}{2}}_{\eta+\xi_2}|\det(g)|^{\lambda_2+1}_{\xi_2}\,dx\\
&=|\det(g)|^{\lambda_2+1}_{\xi_2}(-1)^{\xi_2}\int_\RR |xg_{21}+g_{22}|^{\lambda_1-\nu-\frac{1}{2}}_{\xi_1+\eta}|xg_{11}+g_{12}|^{\nu-\lambda_2-\frac{1}{2}}_{\eta+\xi_2}\,dx,
\end{align*}
where we used the substitution $x\to x^{-1}$ in the second step. This integral can be computed using Corollary~\ref{foldning cor} (for $\lambda$ and $\nu$ in the non-empty open region guaranteeing convergence of the integral) and we find that 
\[
d(\lambda,\nu)=(-1)^{\xi_2+(\xi_1+\eta)(\xi_2+\eta)}\sqrt{\pi}\frac{\Gamma(\frac{\lambda_1-\nu+\frac{1}{2}+[\xi_1+\eta]}{2})\Gamma(\frac{\nu-\lambda_2+\frac{1}{2}+[\xi_2+\eta]}{2})\Gamma(\frac{\lambda_2-\lambda_1+[\xi_1+\xi_2]}{2})}{\Gamma(\frac{\nu-\lambda_1+\frac{1}{2}+[\eta+\xi_1]}{2})\Gamma(\frac{\lambda_2-\nu+\frac{1}{2}+[\xi_2+\eta]}{2})\Gamma(\frac{\lambda_1-\lambda_2+1+[\xi_1+\xi_2]}{2})},
\]
where we have used the notation $[\varepsilon]$ for the unique representative of $\varepsilon\in\ZZ/2\ZZ$ in $\{0,1\}$. We can now use the definition of $\calD(\lambda,\nu)=(\lambda_1-\lambda_2-1)T_{-\lambda_2-1,-\lambda_1}^w\circ M\circ (T_{-w\lambda}^w)^{-1}$ to compute 
\begin{align*}
    (\lambda_1-\lambda_2-1)d(\lambda_2+1,\lambda_1,\nu)K_{\lambda_1,\lambda_2+1}^\nu&=(\lambda_1-\lambda_2-1)T_{-\lambda_2-1,-\lambda_1}^w(\Phi_1K_{\lambda_2,\lambda_1}^\nu)\\
    &=\calD(\lambda,\nu)\circ T_{-\lambda_2,-\lambda_1}^wK_{\lambda_2,\lambda_1}^\nu\\
    &=d(\lambda_2,\lambda_1,\nu)\calD(\lambda,\nu)K_{\lambda_1,\lambda_2}^\nu,
\end{align*}
and cancelling out the various gamma factors we find
\[
\frac{(\lambda_1-\lambda_2-1)d(\lambda_2+1,\lambda_1,\nu)}{d(\lambda_2,\lambda_1,\nu)}=\nu-\lambda_2-\tfrac{1}{2}.
\]
Note that this proves the first identity in Theorem~\ref{thm:IntroA} for the case $n=1$. In total, our Bernstein--Sato identity reads
\[
\calD(\lambda,\nu)K_{\lambda_1,\lambda_2}^\nu=(\nu-\lambda_2-\tfrac{1}{2})K_{\lambda_1,\lambda_2+1}^t \qquad \text{or}\qquad \calD(s,t)K_{s_1,s_2}^t=tK_{s_1,s_2+1}^{t-1}.
\]

\subsection{Analytic continuation and improving normalization}

\noindent Similarly to the construction of $\calD(s,t)$ using multiplication with $\Phi_1$, we can use multiplication with $\Psi_1$ to obtain a differential operator $\calP(s,t)$ depending holomorphically on $s$ and $t$ such that
$$ \calP(s,t)K_{s_1,s_2}^t = s_1K_{s_1-1,s_2+1}^t. $$
With the normalization
$$ \mathbb{K}_{s_1,s_2}^t=\Gamma(s_1+1)^{-1}\Gamma(t+1)^{-1}K_{s_1,s_2}^t, $$
the Bernstein--Sato identities can be rewritten as
\[
\calD(s,t)\mathbb{K}_{s_1,s_2}^t=\mathbb{K}_{s_1,s_2+1}^{t-1}\quad \text{and}\quad \calP(s,t)\mathbb{K}_{s_1,s_2}^t=\mathbb{K}_{s_1-1,s_2+1}^t.
\]
As $\mathbb{K}_{s_1,s_2}^t$ is locally integrable for $\Re(s_1),\Re(t)>-1$ and arbitrary $s_2\in \CC$, we can use these two identities to analytically extend $\mathbb{K}_{s_1,s_2}^t$ to all $(s,t)\in \CC^2\times \CC$, for example from $\{\Re(t)>-1\}$ to $\{\Re(t)>-2\}$ by
\[
    \mathbb{K}_{s_1,s_2}^t := \calD(s,t+1)\mathbb{K}_{s_1,s_2-1}^{t+1}.
\]

However, with this normalization we created some unnecessary zeros of $\mathbb{K}_{s_1,s_2}^t$. This can be seen by rewriting the functional equation \eqref{eq:FunctionalEqForN=1} in terms of the normalized kernels and the normalized Knapp--Stein operators:
\[
\mathbb{K}_{\lambda_1,\lambda_2}^\nu=\frac{\Gamma(\tfrac{\lambda_1-\lambda_2+1+[\xi_1+\xi_2]}{2})\Gamma(\lambda_2-\nu+\tfrac{1}{2})\Gamma(\nu-\lambda_1+\tfrac{1}{2})}{\Gamma(\lambda_1-\nu+\tfrac{1}{2})\Gamma(\nu-\lambda_2+\tfrac{1}{2})}d(\lambda,\nu)\fedT_{-w\lambda}^w\mathbb{K}_{\lambda_2,\lambda_1}^\nu.
\]
Both $\mathbb{K}_{\lambda_1,\lambda_2}^\nu$ and $\fedT_{-w\lambda}^w\mathbb{K}_{\lambda_1,\lambda_2}^\nu$ are holomorphic in $\lambda$ and $\nu$, so that $\mathbb{K}_{\lambda_1,\lambda_2}^\nu$ vanishes at all zeros of the factor in front of $\fedT_{-w\lambda}^w\mathbb{K}_{\lambda_1,\lambda_2}^\nu$. This zero set contains hyperplanes, and multiplying $\mathbb{K}_{\lambda_1,\lambda_2}^\nu$ with gamma factors which have poles on exactly those hyperplanes yields a normalization that still is holomorphic in $\lambda$ and $\nu$:
\[
\fedK_{\lambda_1,\lambda_2}^\nu=\Gamma\big (\tfrac{\lambda_1-\nu+\frac{1}{2}+[\xi_1+\eta]}{2}\big)^{-1}\Gamma\big(\tfrac{\nu-\lambda_2+\frac{1}{2}+[\eta+\xi_2]}{2}\big)^{-1}K_{\lambda_1,\lambda_2}^\nu.
\]
We show in Section~\ref{Section: analytic extension} that this normalization is indeed optimal in the sense that the set of zeros of $\fedK_{\lambda_1,\lambda_2}^\nu$ does not contain any hyperplanes.

\section{Functional identities between Knapp--Stein intertwiners and the distribution kernel}\label{Section funktionalligning}

\noindent One of the key tools in the previous section for $n=1$ was the functional identity \eqref{eq:FunctionalEqForN=1}. In this section we find analogous functional identities for arbitrary $n\geq1$.

The key idea behind the functional identities is that the space $\Hom_H(\pi_{\xi,\lambda}|_H,\tau_{\eta,\nu})$ of symmetry breaking operators between principal series is generically one-dimensional and spanned by $A_{\xi,\lambda}^{\eta,\nu}$. Therefore, the composition of $A_{\xi,\lambda}^{\eta,\nu}$ with a Knapp--Stein intertwining operator for $G$ or for $H$ is proportional to another symmetry breaking operator of this form. For the simple transpositions $w_i$ this implies
$$ A_{\xi,\lambda}^{\eta,\nu} \circ \mathbf{T}_{w_i(\xi,\lambda)}^{w_i} = c_i(\xi,\lambda,\eta,\nu)A_{w_i(\xi,\lambda)}^{\eta,\nu} \qquad \mbox{and} \qquad \mathbf{T}_{\eta,\nu}^{w_i}\circ A_{\xi,\lambda}^{\eta,\nu} = d_i(\xi,\lambda,\eta,\nu)A_{\xi,\lambda}^{w_i(\eta,\nu)} $$
for some constants $c_i(\xi,\lambda,\eta,\nu)$ and $d_i(\xi,\lambda,\eta,\nu)$ that depend meromorphically on $\lambda$ and $\nu$.

To compute these constants, we observe that the adjoint of $T_{w_i(\xi,\lambda)}^{w_i}$ with respect to the $G$-invariant pairing from Section~\ref{sec:PrincipalSeries} is $T_{\xi,-\lambda}^{w_i}$. Since both operators have the same normalization factor, the adjoint of $\mathbf{T}_{w_i(\xi,\lambda)}^{w_i}$ is $\mathbf{T}_{\xi,-\lambda}^{w_i}$. This implies that the distribution kernel of the composition $A_{\xi,\lambda}^{\eta,\nu} \circ \mathbf{T}_{w_i(\xi,\lambda)}^{w_i}f$ equals $\mathbf{T}_{\xi,-\lambda}^{w_i}K_{\xi,\lambda}^{\eta,\nu}$, where we extend $\mathbf{T}_{\xi,-\lambda}^{w_i}$ to the space of distribution vectors of $\pi_{\xi,-\lambda}$ and view $K_{\xi,\lambda}^{\eta,\nu}$ as a distribution vector for $\pi_{\xi,-\lambda}$. Then the functional identity reads
\[
\mathbf{T}_{\xi,-\lambda}^{w_i}K_{\xi,\lambda}^{\eta,\nu}=c_i(\xi,\lambda,\eta,\nu)K_{w_i(\xi,\lambda)}^{\eta,\nu}.
\]
Note that in the case where $K_{\xi,\lambda}^{\eta,\nu}$ is a locally integrable function, the left hand side can be computed using the integral formula \eqref{eq:SimpleTranspositionKSonNbar} for $\mathbf{T}_{\xi,-\lambda}^{w_i}$.

For the composition with Knapp--Stein intertwining operators for $H$, we have to be a bit more careful since intertwining operators for $H$ cannot a priori be applied to distributions on $G$. Nevertheless, in the case where $K_{\xi,\lambda}^{\eta,\nu}$ is a locally integrable function, the distribution kernel of the composition $T_{\eta,\nu}^{w_i}\circ A_{\xi,\lambda}^{\eta,\nu}$ is, in view of \eqref{eq:SBOinTermsOfKernel}, given by applying $T_{\eta,\nu}^{w_i}$ to the function $h\mapsto K_{\xi,\lambda}^{\eta,\nu}(h^{-1}g)$. We therefore define
\begin{equation}
    \mathbf{S}_{\eta,\nu}^{w_i}K_{\xi,\lambda}^{\eta,\nu}(g) := \frac{1}{L(0,\psi_i\psi_{i+1}^{-1})}S_{\eta,\nu}^{w_i}K_{\xi,\lambda}^{\eta,\nu}(g) := \frac{1}{L(0,\psi_i\psi_{i+1}^{-1})}\int_{\RR}K_{\xi,\lambda}^{\eta,\nu}((hw_i\overline{n}_i(x))^{-1}g)\,dx\label{eq:DefinitionKSintertwinerS}
\end{equation}
as the distribution kernel of the composition $T_{\eta,\nu}^{w_i}\circ A_{\xi,\lambda}^{\eta,\nu}$. Then the functional identity reads
$$ \mathbf{S}_{\eta,\nu}^{w_i}K_{\xi,\lambda}^{\eta,\nu}=d_i(\xi,\lambda,\eta,\nu)K_{\xi,\lambda}^{w_i(\eta,\nu)}. $$

\subsection{Identities for minors}\label{sec:IdentitiesMinors}

In order to be able to compute $\mathbf{T}_{\xi,-\lambda}^{w_i}K_{\xi,\lambda}^{\eta,\nu}$ and $\mathbf{S}_{\eta,\nu}^{w_i}K_{\xi,\lambda}^{\eta,\nu}$ we need a few identities involving the minors $\Phi_j(g)$ and $\Psi_j(g)$. We first note that
\begin{align*}
\Phi_j(gw_i\overline{n}_i(x))&=
\begin{cases}
	\Phi_j(g), & j<i,\\
	\Phi_j(gw_i)+x\Phi_j(g), & j=i,\\
	-\Phi_j(g),& j>i,
\end{cases}\\
\Psi_j(gw_i\overline{n}_i(x))&=
\begin{cases}
	\Psi_j(g), & j<i,\\
	\Psi_j(gw_i)+x\Psi_j(g), & j=i,\\
	-\Psi_j(g),& j>i.
\end{cases}
\end{align*}
These identities follow from the fact that the determinant is multilinear and alternating, and similar ones can be obtained for $\Phi_j(\overline{n}_i(x)w_ig)$ and $\Psi_j(\overline{n}_i(x)w_ig)$.
\begin{lemma} \label{algebra}
	For $g\in \GL(n+1,\RR)$ we have the following two identities:
	\begin{align*}
	\Phi_i(g)\Psi_i(gw_i)-\Phi_i(gw_i)\Psi_i(g) &= \Psi_{i-1}(g)\Phi_{i+1}(g),\\
	\Phi_{n+1-i}(w_ig)\Psi_{n-i}(g)-\Phi_{n+1-i}(g)\Psi_{n-i}(w_ig) &= \Psi_{n-i-1}(g)\Phi_{n+2-i}(g),
	\end{align*}
	where we define $\Psi_0=1$.
\end{lemma}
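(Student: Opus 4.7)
The plan is to derive both identities as instances of the classical Desnanot--Jacobi (Dodgson condensation / Lewis Carroll) determinantal identity: for any square matrix $A$ of size at least $2$ and any row indices $i<j$ and column indices $k<l$,
\[
\det(A)\cdot \det\bigl(A^{i,j}_{k,l}\bigr)=\det\bigl(A^i_k\bigr)\det\bigl(A^j_l\bigr)-\det\bigl(A^i_l\bigr)\det\bigl(A^j_k\bigr),
\]
where $A^{\ldots}_{\ldots}$ denotes deletion of the indicated rows and columns. To apply this, I pass to $h:=w_0g$, so that by definition $\Phi_p(g)=\widetilde\Phi_p(h)$ is the top-left $p\times p$ minor of $h$ and $\Psi_q(g)=\widetilde\Psi_q(h)$ is the minor of $h$ supported on rows $2,\dots,q+1$ and columns $1,\dots,q$. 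In this picture, $gw_i$ swaps columns $i$ and $i+1$ of $h$, whereas the conjugation relation $w_0w_iw_0^{-1}=w_{n+1-i}$ shows that $w_ig$ corresponds to swapping rows $n+1-i$ and $n+2-i$ of $h$.

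For the first identity, I choose $M$ to be the top-left $(i+1)\times(i+1)$ submatrix of $h$. A direct inspection (using the above identifications) gives $\det M=\Phi_{i+1}(g)$ and $\det M^{1,i+1}_{i,i+1}=\Psi_{i-1}(g)$. Moreover, the four ``one-row-one-column'' deletions corresponding to the row pair $\{1,i+1\}$ and column pair $\{i,i+1\}$ are exactly the four minors $\Phi_i(g)$, $\Phi_i(gw_i)$, $\Psi_i(g)$, $\Psi_i(gw_i)$, with the choice of deleted row selecting between $\Phi$ and $\Psi$ and the choice of deleted column selecting whether we apply the column swap $w_i$. Desnanot--Jacobi applied to $M$ with $(i,j)=(1,i+1)$, $(k,l)=(i,i+1)$ then produces the first identity.

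For the second identity I carry out the dual construction: with $p=n+1-i$ I pick a $(p+1)\times(p+1)$ submatrix of $h$ whose last two rows correspond to rows $p$ and $p+1$ of $h$, so that the row swap $h\mapsto w_ph$ induced by $w_ig$ becomes an adjacent row-swap inside this submatrix. With this choice, each of the minors $\Phi_{n+1-i}(g)$, $\Phi_{n+1-i}(w_ig)$, $\Psi_{n-i}(g)$, $\Psi_{n-i}(w_ig)$ is again a ``one-row-one-column'' deletion, while $\Phi_{n+2-i}(g)$ and $\Psi_{n-i-1}(g)$ play the roles of $\det(A)$ and $\det(A^{i,j}_{k,l})$ (or vice versa). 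A second application of Desnanot--Jacobi to this submatrix with the appropriate row/column pairs then yields the second identity.

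The main technical hurdle is the bookkeeping of signs, since the definitions $\Phi_p(g)=\widetilde\Phi_p(w_0g)$ and $\Psi_q(g)=\widetilde\Psi_q(w_0g)$ reverse the order of rows and thus introduce factors of $(-1)^{\binom{p}{2}}$ and $(-1)^{\binom{q}{2}}$ relative to the ``rows-in-increasing-order'' minors listed in the introduction, and the left/right Weyl actions must be tracked through $w_0w_iw_0^{-1}=w_{n+1-i}$. Once these are aligned consistently, both identities follow immediately from the classical Desnanot--Jacobi identity. An alternative and equivalent route, which I would fall back on if the sign bookkeeping becomes cumbersome, is a direct cofactor expansion of the relevant $p\times p$ determinants along the one column (or row) that distinguishes the minors on the two sides; this essentially recovers the same algebraic identity without invoking Desnanot--Jacobi by name.
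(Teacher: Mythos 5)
Your treatment of the \emph{first} identity is correct and takes a genuinely different route from the paper. The paper's proof observes that both sides transform the same way under $P_H$ from the left and $P_G$ from the right and then checks equality at a single representative of the open dense double coset in $P_H\backslash G/P_G$; you instead realize the identity as a literal instance of Desnanot--Jacobi for the top-left $(i+1)\times(i+1)$ block of $h=w_0g$ with row pair $(1,i+1)$ and column pair $(i,i+1)$. All six minors match up exactly as you describe, and there are no stray signs because everything is expressed through $h$ rather than through the reordered-row formulas of the introduction. This is arguably cleaner, avoids the equivariance bookkeeping entirely, and proves the identity for all $g$ in one stroke.

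The second identity is where your argument has a genuine gap, and the obstruction is structural: $\Phi_{n+1-i}$ is a minor of size $p=n+1-i$ while $\Psi_{n-i}$ is a minor of size $p-1$, so the four minors $\Phi_{n+1-i}(g)$, $\Phi_{n+1-i}(w_ig)$, $\Psi_{n-i}(g)$, $\Psi_{n-i}(w_ig)$ do not all have the same size and hence cannot all be one-row-one-column deletions of a single $(p+1)\times(p+1)$ submatrix of $h$ --- every such deletion has size $p$. (Equivalently: Desnanot--Jacobi always pairs an $m$-minor with an $(m-2)$-minor against products of two $(m-1)$-minors, whereas here the total degree $2p-1$ is odd.) Had you pushed the computation through, you would also have found that the identity as printed cannot hold: its right-hand side $\Psi_{n-i-1}(g)\Phi_{n+2-i}(g)$ involves column $n+2-i$ of $g$ while the left-hand side only involves columns $1,\dots,n+1-i$, so the two sides do not even have the same equivariance under right multiplication by $A_G$. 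Concretely, for $n=2$, $i=1$ and $g=\left(\begin{smallmatrix}0&1&2\\1&1&0\\1&0&0\end{smallmatrix}\right)$, which lies in the open double coset, the left-hand side equals $1$ while $\Psi_0(g)\Phi_3(g)=\det(w_0g)=2$. The right-hand side forced by degree, equivariance and by the parameter bookkeeping in the proof of Theorem~\ref{main theorem}(2) is $\Phi_{n-i}(g)\Psi_{n+1-i}(g)$, and that corrected identity \emph{is} accessible by your method after a bordering trick: apply Desnanot--Jacobi not to a submatrix of $h$ but to the $(p+1)\times(p+1)$ matrix obtained from $h[1,\dots,p+1;\,1,\dots,p]$ by adjoining the column $e_1$ on the left, taking row pair $(p,p+1)$ and the column pair consisting of the adjoined column and column $p$; expanding along the adjoined column identifies the six determinants as $\Psi_{n+1-i}(g)$, $\Phi_{n-i}(g)$ and the four minors above. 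You should rework this part with the corrected statement.
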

\begin{proof}
	We only show the first identity, the second one follows by analogous arguments. We notice that both sides of the identity have the same equivariance properties from the left by $P_H$. Similarly, both sides have the same equivariance properties from the right by $P_G$, e.g. for $n\in N_G$ we have $w_i n w_i=n'\overline{n}_i(x)$ for some $n'\in N_G$ and $x=n_{i,i+1}$ thus
	\begin{align*}
		\Phi_i(gn)\Psi_i(gnw_i)-\Phi_i(gnw_i)\Psi_i(gn)&=\Phi_i(g)\big[\Psi_i(gw_i)+x\Psi_i(g) \big]-\Psi_i(g)\big[\Phi_i(gw_i)+x\Phi_i(g)\big]\\
		&=\Phi_i(g)\Psi_i(gw_i)-\Phi_i(gw_i)\Psi_i(g).
	\end{align*}
There is an open dense double coset $\omega$ in $P_H\backslash G/ P_G$ which can be described as $\omega=P_HgP_G$ with $g\in G$ satisfying $\Phi_i(g),\Psi_i(g)\neq 0$ for all $i$ (see \cite[Lemma 6.3 \& Lemma 3.6]{F23}). As we are proving an equality of continuous functions, it suffices to check that they coincide on $\omega$. Consider the representative $g$ given by $w_0$ multiplied with the matrix with $1$'s on the diagonal and on the subdiagonal and $0$'s everywhere else. Then the two sides are easily checked to be equal on $g$ and hence on $\omega=P_HgP_G$.
\end{proof}

\subsection{Functional identities for symmetry breaking operators}

As alluded to in the introduction, it simplifies some of the statements if we denote by $\chi_i$ and $\psi_j$ the characters $\chi_i(x)=\vert x \vert^{\lambda_i}_{\xi_i}$ and $\psi_j(x)=\vert x\vert^{\nu_j}_{\eta_j}$, so that
\[
\pi_{\xi,\lambda}=\Ind_{P_G}^G(\chi_1\otimes\cdots\otimes\chi_{n+1}) \qquad \mbox{and} \qquad \tau_{\eta,\nu}=\Ind_{P_H}^H(\psi_1\otimes\cdots\otimes\psi_n).
\]
Recall the definition of the archimedean local $L$-factor $L(s,\chi)$ for a character $\chi$ of $\RR^\times$ from \eqref{eq:DefLocalLFactor}.

\begin{theorem} \label{main theorem}
	\begin{enumerate}
	    \item Let $1\leq i\leq n$ and $(\lambda,\nu)\in\CC^{n+1}\times\CC^n$ such that $\Re(\lambda_i-\nu_{n+1-i})$, $\Re(\nu_{n+1-i}-\lambda_{i+1})>-\frac{1}{2}$ and $\Re(\lambda_i-\lambda_{i+1})<0$. Whenever both sides are defined, the following identity holds:
	   \[
	   \mathbf{T}_{\xi,-\lambda}^{w_i}K_{\xi,\lambda}^{\eta,\nu}=c_i(\xi,\lambda,\eta, \nu)K_{w_i(\xi,\lambda)}^{\eta,\nu},
	   \]
        where
        \[
        c_i(\xi,\lambda,\eta,\nu)=\frac{(-1)^{(\xi_i+\xi_{i+1})(\eta_{n+1-i}+1)+\xi_i\xi_{i+1}}L(\frac{1}{2},\chi_i\psi_{n+1-i}^{-1})L(\frac{1}{2},\chi_{i+1}^{-1}\psi_{n+1-i})}{\sqrt{\pi}L(1,\chi_i\chi_{i+1}^{-1})L(\frac{1}{2},\chi_i^{-1}\psi_{n+1-i})L(\frac{1}{2},\chi_{i+1}\psi_{n+1-i}^{-1})}.
        \]
        \item Let $1\leq i\leq n-1$ and $(\lambda,\nu)\in\CC^{n+1}\times\CC^n$ such that $\Re(\lambda_{n+1-i}-\nu_{i}), \Re(\nu_{i+1}-\lambda_{n+1-i})>-\frac{1}{2}$ and $\Re(\nu_{i+1}-\nu_{i})<0$. Whenever both sides are defined, the following identity holds:
        \[
        \mathbf{S}_{\eta,\nu}^{w_i}K_{\xi,\lambda}^{\eta,\nu}=d_i(\xi,\lambda,\eta,\nu)K_{\xi,\lambda}^{w_i(\eta,\nu)},
        \]
	   where
        \[
        d_i(\xi,\lambda,\eta,\nu)=\frac{(-1)^{(\eta_i+\eta_{i+1})(\xi_{n+1-i}+1)+\eta_i\eta_{i+1}}L(\frac{1}{2},\chi_{n+1-i}\psi_i^{-1})L(\frac{1}{2},\chi_{n+1-i}^{-1}\psi_{i+1})}{\sqrt{\pi} L(1,\psi_{i+1}\psi_i^{-1})L(\frac{1}{2},\chi_{n+1-i}^{-1}\psi_i)L(\frac{1}{2},\chi_{n+1-i}\psi_{i+1}^{-1})}.
        \]
	\end{enumerate}
\end{theorem}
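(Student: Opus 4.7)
The plan for part (1) is to exploit that the space $\calD'(G)_{w_i(\xi,\lambda)}^{\eta,\nu}$ is generically one-dimensional and spanned by $K_{w_i(\xi,\lambda)}^{\eta,\nu}$. Both sides of the claimed identity belong to this space: the right-hand side by construction, and the left-hand side because $\mathbf{T}_{\xi,-\lambda}^{w_i}$ is the adjoint of the Knapp--Stein operator $\mathbf{T}_{w_i(\xi,\lambda)}^{w_i}:\pi_{w_i(\xi,\lambda)}\to\pi_{\xi,\lambda}$ and therefore carries $(P_G,P_H)$-equivariant distribution vectors of $\pi_{\xi,-\lambda}$ to $(P_G,P_H)$-equivariant distribution vectors of $\pi_{w_i(\xi,\lambda),-w_i(\xi,\lambda)}$. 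Consequently, there is a meromorphic scalar $c_i(\xi,\lambda,\eta,\nu)$, and the task reduces to computing it by evaluating both sides at a single generic point $g$ in the open dense double coset $P_H\backslash G/P_G$, on which all minors $\Phi_j(g),\Psi_j(g)$ are nonzero (cf.\ the argument used in the proof of Lemma~\ref{algebra}).

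To compute $c_i$, I would restrict to the parameter region from the hypothesis, where both the integral defining $K_{\xi,\lambda}^{\eta,\nu}$ and the Knapp--Stein integral \eqref{eq:SimpleTranspositionKSonNbar} at parameter $-\lambda$ (which converges precisely when $\Re(\lambda_i-\lambda_{i+1})<0$) are absolutely convergent. Writing $T_{\xi,-\lambda}^{w_i}K(g)=\int_\RR K(gw_i\overline{n}_i(x))\,dx$ and substituting the identities of Section~\ref{sec:IdentitiesMinors}, all factors $\Phi_j(gw_i\overline{n}_i(x))$ and $\Psi_j(gw_i\overline{n}_i(x))$ with $j\neq i$ become constant in $x$ up to signs $(-1)^{[\delta_j]}$ or $(-1)^{[\varepsilon_j]}$ (for $j>i$). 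The entire integral therefore reduces to the one-dimensional Mellin convolution
\[
\int_\RR |\Phi_i(gw_i)+x\Phi_i(g)|^{s_i}_{\delta_i}\,|\Psi_i(gw_i)+x\Psi_i(g)|^{t_i}_{\varepsilon_i}\,dx,
\]
which is evaluated using the same beta-function type identity (Corollary~\ref{foldning cor}) used in Section~\ref{eksempel funktionalligning}. The crucial algebraic input is Lemma~\ref{algebra}, which rewrites the resulting ``cross term'' $\Phi_i(gw_i)\Psi_i(g)-\Phi_i(g)\Psi_i(gw_i)$ as $-\Psi_{i-1}(g)\Phi_{i+1}(g)$. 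A check of exponents using $s_i+t_i+1=\lambda_i-\lambda_{i+1}$ confirms that the resulting powers of $|\Phi_{i+1}(g)|$ and $|\Psi_{i-1}(g)|$ are exactly what is needed to shift $K_{\xi,\lambda}^{\eta,\nu}(g)$ to $K_{w_i(\xi,\lambda)}^{\eta,\nu}(g)$ under the swap $\lambda_i\leftrightarrow\lambda_{i+1}$. The remaining gamma factors, together with the normalization factor $L(0,\chi_i\chi_{i+1}^{-1})^{-1}$ from $\mathbf{T}_{\xi,-\lambda}^{w_i}$, then reassemble via \eqref{eq:DefLocalLFactor} and the Legendre duplication formula into the stated expression for $c_i$.

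Part (2) follows by a mirror argument: one writes $\mathbf{S}_{\eta,\nu}^{w_i}K_{\xi,\lambda}^{\eta,\nu}$ via \eqref{eq:DefinitionKSintertwinerS} as an integral of $K_{\xi,\lambda}^{\eta,\nu}((hw_i\overline{n}_i(x))^{-1}g)$, and uses left-sided analogues of the minor identities of Section~\ref{sec:IdentitiesMinors} together with the second identity of Lemma~\ref{algebra}, which is tailored precisely to capture how $\Phi_{n+1-i}$ and $\Psi_{n-i}$ transform under the embedded simple transposition $w_i\in W_H$ from the left. The remaining step is the same Mellin convolution and the same gamma-to-$L$-factor recombination. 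The main obstacle I anticipate is the bookkeeping of signs: the factors $(-1)^{[\delta_j]}, (-1)^{[\varepsilon_j]}$ from the negated minors, the $(-1)^{\xi_{i+1}}$ in \eqref{eq:SimpleTranspositionKSonNbar}, and the parity-dependent sign produced by Corollary~\ref{foldning cor} for odd/even $\delta_i+\varepsilon_i$, must all assemble into the precise prefactor $(-1)^{(\xi_i+\xi_{i+1})(\eta_{n+1-i}+1)+\xi_i\xi_{i+1}}$ stated in the theorem, and the analogous check is required for part (2); this is a routine but error-prone parity computation. Once the identity is established on the convergence region, both sides extend meromorphically in $(\lambda,\nu)$ (by the known meromorphic dependence of $K_{\xi,\lambda}^{\eta,\nu}$ and the normalized Knapp--Stein intertwiners), so the identity holds in full generality.
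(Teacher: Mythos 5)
Your proposal is correct and follows essentially the same route as the paper: reduce the Knapp--Stein integral applied to the kernel to the one-dimensional convolution $\int_\RR|\Phi_i(gw_i)+x\Phi_i(g)|^{s_i}_{\delta_i}|\Psi_i(gw_i)+x\Psi_i(g)|^{t_i}_{\varepsilon_i}\,dx$ using the minor identities of Section~\ref{sec:IdentitiesMinors}, evaluate it by Corollary~\ref{foldning cor}, and rewrite the cross term via Lemma~\ref{algebra}; the exponent check $s_i+t_i+1=\lambda_i-\lambda_{i+1}$ and the mirror argument for part (2) are also exactly as in the paper. The only (harmless) difference is that you first invoke generic one-dimensionality of the kernel space to get a proportionality constant, whereas the paper's direct pointwise computation on the open dense set makes that step unnecessary.
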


\begin{proof}
To simplify notation we put $|\Psi_{n+1}|_{\varepsilon_{n+1}}^{t_{n+1}}=1 $. For $g$ in the open dense set where $\Phi_i(g),\allowbreak\Phi_i(gw_i),\allowbreak\Psi_i(g),\allowbreak\Psi_i(gw_i)\allowbreak\neq 0$ we have 
	\begin{align*}
		T_{\xi,-\lambda}^{w_i}K_{\xi,\lambda}^{\eta,\nu}(g)
		=\int_\RR K_{\xi,\lambda}^{\eta,\nu}(gw_i\overline{n}_i(x)) dx
		&=(-1)^{\delta_{n+1}+\sum_{j=i+1}^{n}(\delta_j+\varepsilon_j)}\prod_{\substack{j=1 \\ j\neq i}}^{n+1}|\Phi_j(g)|^{s_j}_{\delta_j}|\Psi_j(g)|^{t_j}_{\varepsilon_j}\\
		&\quad \times \int_\RR
		|\Phi_i(gw_i)+x\Phi_i(g)|_{\delta_i}^{s_i}|\Psi_i(gw_i)+x\Psi_i(g)|^{t_i}_{\varepsilon_i} dx.
	\end{align*}
	The integral can be evaluated using Corollary \ref{foldning cor} giving
	\[
	(-1)^{\delta_i}t(s_i,t_i,\delta_i,\varepsilon_i)|\Phi_i(g)|^{-t_i-1}_{\varepsilon_i}|\Psi_i(g)|_{\delta_i}^{-s_i-1}|\Phi_i(g)\Psi_i(gw_i)-\Phi_i(gw_i)\Psi_i(g)|_{\varepsilon_i+\delta_i}^{s_i+t_i+1}.
	\]
	Lastly, applying Lemma~\ref{algebra} we arrive at the first result. 
	The second assertion follows similarly, using \eqref{eq:DefinitionKSintertwinerS} and the second identity in Lemma~\ref{algebra}.
\end{proof}
\begin{remark} \label{alternativ beskrivelse}
Replacing $(\xi,\lambda)$ by $w_i(\xi,\lambda)$, we can phrase the first result in a slightly different way: for $\Re(\lambda_{i+1}-\nu_{n+1-i}),\Re(\nu_{n+1-i}-\lambda_i)>-\frac{1}{2}$ and $\Re(\lambda_{i+1}-\lambda_{i})<0$ we have
	\[
	K_{\xi,\lambda}^{\eta,\nu}=\widetilde{c}_i(\xi,\lambda,\eta,\nu)\mathbf{T}_{w_i(\xi,-\lambda)}^{w_i}K_{w_i(\xi,\lambda)}^{\eta,\nu},
	\]
	where 
 \[
 \widetilde{c}_i(\xi,\lambda,\eta,\nu)=\frac{\sqrt{\pi}L(1,\chi_i^{-1}\chi_{i+1})L(\frac{1}{2},\chi_{i+1}^{-1}\psi_{n+1-i})L(\frac{1}{2},\chi_i\psi_{n+1-i}^{-1})}{(-1)^{(\xi_i+\xi_{i+1})(\eta_{n+1-i}+1)+\xi_i\xi_{i+1}}L(\frac{1}{2},\chi_{i+1}\psi_{n+1-i}^{-1})L(\frac{1}{2},\chi_i^{-1}\psi_{n+1-i})}.
 \]
\end{remark}

In view of \eqref{Knapp-Stein}, we also obtain a functional identity for Knapp--Stein intertwiners associated with Weyl group elements $w$ that are not necessarily simple transpositions. The following special case is used in Section~\ref{Section: analytic extension} to identify zeros of the holomorphic normalization of $K_{\xi,\lambda}^{\eta,\nu}$ obtained from the Bernstein--Sato identities:

\begin{corollary}\label{grimt koro}
	For $1\leq i\leq n$ let $w_+=w_iw_{i+1}\dots w_n$ and $w_-=w_{i-1}w_{i-2}\dots w_1$. Then for $(\lambda,\nu)\in A^{\pm}$ we have
	\[
	K_{\xi,\lambda}^{\eta,\nu}=b_i^\pm(\xi,\lambda,\eta.\nu)\mathbf{T}^{w_\pm}_{w_\pm^{-1}(\xi,-\lambda)}K_{w_\pm^{-1}(\xi,\lambda)}^{\eta,\nu},
	\]
	where
	\begin{equation*}
	b_i^{+}(\xi,\lambda,\eta,\nu)
	=\alpha^+\prod_{j=i}^{n}\frac{L(1,\chi_{j+1}\chi_i^{-1})L(\frac{1}{2},\chi_{j+1}^{-1}\psi_{n+1-j})L(\frac{1}{2},\chi_{i}\psi_{n+1-j}^{-1})}{L(\frac{1}{2},\chi_{j+1}\psi_{n+1-j}^{-1})L(\frac{1}{2},\chi_i^{-1}\psi_{n+1-j})}
	\end{equation*}
	and
	\begin{equation*}
	b_i^{-}(\xi,\lambda,\eta,\nu)\\=\alpha^-\prod_{j=1}^{i-1}\frac{L(1,\chi_i\chi_j^{-1})L(\frac{1}{2},\chi_i^{-1}\psi_{n+1-j})L(\frac{1}{2},\chi_j\psi_{n+1-j}^{-1})}{L(\frac{1}{2},\chi_i\psi_{n+1-j}^{-1})L(\frac{1}{2},\chi_j^{-1}\psi_{n+1-j})},
	\end{equation*}
	$\alpha^{\pm}$ are products of powers of $\pi$ and $(-1)$ that depend on $\xi$ and $\eta$, and
	\[
	A^+=\bigcap_{j=i}^n\Big\{(\lambda,\nu)|\Re(\lambda_{j+1}-\nu_{n+1-j}),\Re(\nu_{n+1-j}-\lambda_i)>-\frac{1}{2},\Re(\lambda_{j+1}-\lambda_i)<0\Big \}\neq \varnothing,
	\]
	and
	\[
	A^-=\bigcap_{j=i}^n\Big\{(\lambda,\nu)|\Re(\lambda_{i}-\nu_{n+1-j}),\Re(\nu_{n+1-j}-\lambda_j)>-\frac{1}{2},\Re(\lambda_{i}-\lambda_j)<0\Big \}\neq \varnothing.
	\]
\end{corollary}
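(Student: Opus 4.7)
My plan is to prove both identities by induction on the length $\ell(w_\pm)$, iterating the single-reflection functional identity of Remark~\ref{alternativ beskrivelse} and collapsing the resulting composition of Knapp--Stein operators by means of the cocycle relation \eqref{Knapp-Stein}. I describe the $w_+$ case in detail; the $w_-$ case is entirely analogous, by writing $w_-=w_{i-1}\cdot(w_{i-2}\cdots w_1)$ and inducting on $i$ upward from the trivial base $i=1$ (where $w_-$ is the identity and the identity reduces to $b_1^-=1$).

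The base case $i=n$ for $w_+$ is precisely Remark~\ref{alternativ beskrivelse} applied at the simple reflection $w_n$, and the hypothesis region $A^+$ collapses to the single condition of that remark. For the inductive step write $w_+=w_i\cdot w_+'$ with $w_+'=w_{i+1}\cdots w_n$; this is a reduced decomposition, so $\ell(w_+)=\ell(w_i)+\ell(w_+')$. Applying Remark~\ref{alternativ beskrivelse} at index $i$ and then the inductive hypothesis (at index $i+1$) to the kernel $K_{w_i(\xi,\lambda)}^{\eta,\nu}$ yields
\[
K_{\xi,\lambda}^{\eta,\nu}=\widetilde{c}_i(\xi,\lambda,\eta,\nu)\,b_{i+1}^{+}(w_i(\xi,\lambda),\eta,\nu)\,\mathbf{T}^{w_i}_{w_i(\xi,-\lambda)}\mathbf{T}^{w_+'}_{(w_+')^{-1}w_i(\xi,-\lambda)}K_{(w_+')^{-1}w_i(\xi,\lambda)}^{\eta,\nu}.
\]
The cocycle \eqref{Knapp-Stein} applied with $w'=w_i$ and $w=w_+'$ collapses the composition of intertwiners into $\mathbf{T}^{w_+}_{(w_+')^{-1}w_i(\xi,-\lambda)}=\mathbf{T}^{w_+}_{w_+^{-1}(\xi,-\lambda)}$, using $w_+^{-1}=(w_+')^{-1}w_i$; the kernel on the right becomes $K_{w_+^{-1}(\xi,\lambda)}^{\eta,\nu}$, which is the claimed identity up to identifying the constant.

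To identify the constant I unfold the recursion $b_i^+=\widetilde c_i(\xi,\lambda,\eta,\nu)\cdot b_{i+1}^+(w_i(\xi,\lambda),\eta,\nu)$. The central bookkeeping observation is that $w_i$ swaps positions $i$ and $i+1$, so after the partial shifts $w_{i+k-1}\cdots w_i$ the original character $\chi_i$ has bubbled up to position $i+k$, position $i+k+1$ still carries the original $\chi_{i+k+1}$ (as none of the reflections applied touches that position), and the $\psi_j$ on the $H$-side are unaffected throughout. Consequently $\widetilde c_{i+k}$ evaluated at the shifted parameters equals
\[
\frac{\sqrt\pi\,L(1,\chi_i^{-1}\chi_{i+k+1})\,L(\tfrac12,\chi_{i+k+1}^{-1}\psi_{n+1-i-k})\,L(\tfrac12,\chi_i\psi_{n+1-i-k}^{-1})}{(\textup{sign})\cdot L(\tfrac12,\chi_{i+k+1}\psi_{n+1-i-k}^{-1})\,L(\tfrac12,\chi_i^{-1}\psi_{n+1-i-k})},
\]
which upon reindexing $j=i+k$ and multiplying for $k=0,\ldots,n-i$ recovers the product $\prod_{j=i}^n$ in the stated formula. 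The signs and powers of $\pi$ collect into the prefactor $\alpha^+$. The hypothesis of Remark~\ref{alternativ beskrivelse} at step $k$ reads $\Re(\lambda_{j+1}-\nu_{n+1-j}),\,\Re(\nu_{n+1-j}-\lambda_i)>-\tfrac12$ and $\Re(\lambda_{j+1}-\lambda_i)<0$ with $j=i+k$, so the intersection over all $k$ is precisely $A^+$. Non-emptiness of $A^+$ is immediate: choose $\Re(\lambda_i)$ arbitrarily, then independently for each $j=i,\ldots,n$ pick $\Re(\lambda_{j+1})\in\bigl(\Re(\lambda_i)-1,\,\Re(\lambda_i)\bigr)$ and $\Re(\nu_{n+1-j})\in\bigl(\Re(\lambda_i)-\tfrac12,\,\Re(\lambda_{j+1})+\tfrac12\bigr)$, which is a nonempty open set since the $\nu$-components are distinct.

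The main obstacle is purely notational: keeping straight how the pivot character $\chi_i$ migrates along the reduced word and which characters occupy which positions after each partial Weyl shift. Once this bubbling pattern is transparent, the cocycle assembles the intertwiner and the gamma factors telescope into the stated formula by a mechanical computation.
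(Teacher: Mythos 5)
Your proposal is correct and follows essentially the same route as the paper: decompose $\mathbf{T}^{w_\pm}$ into simple-reflection intertwiners via the cocycle relation \eqref{Knapp-Stein} and iterate Remark~\ref{alternativ beskrivelse}, tracking how $\chi_i$ migrates along the reduced word so that the factors $\widetilde{c}_j(w_{j-1}\cdots w_i(\xi,\lambda),\eta,\nu)$ telescope into $b_i^\pm$. Your inductive organization and the explicit verification of the convergence region and non-emptiness of $A^\pm$ are just a more detailed write-up of the paper's two-line argument.
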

\begin{proof}
	Using \eqref{Knapp-Stein} we find that
	\[
	\mathbf{T}^{w_+}_{w_+^{-1}(\xi,-\lambda)}K_{w_+^{-1}(\xi,\lambda)}^{\eta,\nu}=\mathbf{T}^{w_i}_{w_i(\xi,-\lambda)}\circ\dots\circ \mathbf{T}^{w_n}_{w_n\dots w_i(\xi,-\lambda)}K_{w_n\dots w_i(\xi,\lambda)}^{\eta,\nu}.
	\]
Thus, by successive use of Remark \ref{alternativ beskrivelse} we obtain the claimed formula for $w_+$ with
	\[
	b^+(\xi,\lambda,\eta,\nu)=\widetilde{c}_i(\xi,\lambda,\eta,\nu)\prod_{j=i+1}^{n}\widetilde{c}_j(w_{j-1}\dots w_i(\xi,\lambda),\eta,\nu).
	\]
	The formula for $w_-$ is proven similarly.
\end{proof}

\section{Bernstein--Sato identities for the distribution kernel} \label{section:BS-identities}

\noindent In this section we establish Bernstein--Sato identities for the distribution kernels $K_{\xi,\lambda}^{\eta,\nu}$ for general $n$. The proof follows the same ideas as the one for $n=1$ outlined in Section~\ref{Eksempel}, namely composing multiplication operators with $\Phi_i$ or $\Psi_j$ with Knapp--Stein intertwining operators. Instead of using intertwining operators associated with arbitrary permutations $w$, we only use the ones coming from simple transpositions $w_i$ together with an inductive argument using \eqref{Knapp-Stein}. The major difference between $n=1$ and the general case is that we also need to involve Knapp--Stein intertwining operators for the representations $\tau_{\eta,\nu}$ of $H$ to obtain sufficiently many differential operators to carry out the analytic continuation in Section~\ref{Section: analytic extension}.

\subsection{An \texorpdfstring{$\SL(2)$}{SL(2)}-computation}

The key computation is essentially contained in Section~\ref{eksempel diff operator} for the case $n=1$. Here we give a formulation for arbitrary $n$. It involves certain differential operators that arise from the right regular representation $r$ of $G$ on $C^\infty(G)$ given by $r(g)f(x)=f(xg)$ ($g,x\in G$, $f\in C^\infty(G)$). We consider the differentiated representation $dr$ and for $1\leq i,j\leq n+1$ we put
$$ \varepsilon^{i,j} = dr(E_{i,j}) = \sum_{k=1}^{n+1} g_{k,i}\partial_{g_{k,j}}. $$
For $i=j+1$ the one-parameter group $\exp(xE_{j+1,j})$ is given by $\overline{n}_j(x)$, so that
\[
	\partial_x[f(g\overline{n}_j(x))]=(\varepsilon^{j+1,j}f)(g\overline{n}_j(x))=\varepsilon^{j+1,j}[f(g\overline{n}_j(x))].
\]

\begin{lemma} \label{partiel integration}
	Fix $1\leq i\leq n$. For $\Re(\lambda_i-\lambda_{i+1})\gg1$ and $f\in \pi_{w_i(\xi,-\lambda)}$ we have
	\begin{align*}
		\int_\RR \partial_x\Big(|x|_{\xi_i+\xi_{i+1}}^{\lambda_i-\lambda_{i+1}-1}\Big)f(g\overline{n}_i(x))\,dx&=-\int_\RR|x|_{\xi_i+\xi_{i+1}}^{\lambda_i-\lambda_{i+1}-1} \partial_x [f(g\overline{n}_i(x))]\,dx\\
		&=-\varepsilon^{i+1,i} \int_\RR|x|_{\xi_i+\xi_{i+1}}^{\lambda_i-\lambda_{i+1}-1}f(g\overline{n}_i(x))\,dx.
	\end{align*}
	More generally, if $\calD$ and $\calD'$ are differential operators on $G$ satisfying
    $$ (\calD f)(g\overline{n}_i(x))=\calD' [f(g\overline{n}_i(x))] \qquad \mbox{for all }x\in\RR, $$
    then 
	\[
	\int_\RR|x|_{\xi_i+\xi_{i+1}}^{\lambda_i-\lambda_{i+1}-1}  [\calD f](g\overline{n}_i(x))\,dx=\calD' \int_\RR|x|_{\xi_i+\xi_{i+1}}^{\lambda_i-\lambda_{i+1}-1}f(g\overline{n}_i(x))\,dx,
	\]
	where on the right hand sides $\mathcal{D}'$ is differentiating in $g$.
\end{lemma}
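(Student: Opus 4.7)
The plan is to derive the lemma from two standard manoeuvres: classical integration by parts on $\RR$, and differentiation under the integral sign along the one-parameter subgroup $\overline{n}_i(\cdot)$. The chained equality splits naturally as (IBP) followed by (pull $\varepsilon^{i+1,i}$ outside the integral), and the general statement uses only the latter mechanism applied to an arbitrary $(\calD,\calD')$ rather than to $(\varepsilon^{i+1,i},\varepsilon^{i+1,i})$.

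For the first equality I would perform integration by parts separately on $(-\infty,0)$ and $(0,\infty)$. Two points require justification. First, once $\Re(\lambda_i-\lambda_{i+1})\gg 1$ the weight $|x|^{\lambda_i-\lambda_{i+1}-1}_{\xi_i+\xi_{i+1}}$ is continuous and vanishes at $x=0$, so no delta-type contribution arises at the origin. Second, the Iwasawa decomposition \eqref{iwasawa} of $\overline{n}_i(x)$ combined with the $P_G$-equivariance of $f\in\pi_{w_i(\xi,-\lambda)}$ (whose $A_G$-character on the relevant rank-one piece is $a(x)^{w_i\lambda-\rho_G}=(1+x^2)^{(\lambda_{i+1}-\lambda_i-1)/2}$) gives $|f(g\overline{n}_i(x))|\leq C_g(1+x^2)^{(\Re(\lambda_{i+1}-\lambda_i)-1)/2}$ with $C_g$ depending continuously on $g$. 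The product with the weight therefore decays like $|x|^{-2}$ at infinity, so the boundary terms at $\pm\infty$ vanish and the IBP is legitimate.

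For the second equality I would use that $\overline{n}_i(x)=\exp(xE_{i+1,i})$ is a one-parameter subgroup, whence by definition of $\varepsilon^{i+1,i}=dr(E_{i+1,i})$ one has $\partial_x[f(g\overline{n}_i(x))]=(\varepsilon^{i+1,i}f)(g\overline{n}_i(x))$. Setting $F(g):=\int_{\RR}|x|^{\lambda_i-\lambda_{i+1}-1}_{\xi_i+\xi_{i+1}}f(g\overline{n}_i(x))\,dx$ and using $\overline{n}_i(t)\overline{n}_i(x)=\overline{n}_i(t+x)$, one finds
\[
F(g\overline{n}_i(t))=\int_{\RR}|x|^{\lambda_i-\lambda_{i+1}-1}_{\xi_i+\xi_{i+1}}f(g\overline{n}_i(t+x))\,dx.
\]
Differentiating at $t=0$ under the integral sign (justified by the same $|x|^{-2}$ decay after bringing in one $t$-derivative) yields $[\varepsilon^{i+1,i}F](g)=\int_{\RR}|x|^{\lambda_i-\lambda_{i+1}-1}_{\xi_i+\xi_{i+1}}\partial_x[f(g\overline{n}_i(x))]\,dx$, which combined with the IBP output produces the second equality.

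The general statement follows at once: the hypothesis $(\calD f)(g\overline{n}_i(x))=\calD'[f(g\overline{n}_i(x))]$ turns the integrand on the left-hand side into $|x|^{\lambda_i-\lambda_{i+1}-1}_{\xi_i+\xi_{i+1}}\calD'[f(g\overline{n}_i(x))]$, after which $\calD'$ is pulled outside the integral by differentiation under the integral in the $g$-variable. The only real obstacle is verifying this legality for a higher-order $\calD'$: one must bound the relevant iterated $g$-derivatives of $|x|^{\lambda_i-\lambda_{i+1}-1}_{\xi_i+\xi_{i+1}}f(g\overline{n}_i(x))$ uniformly on compact subsets of $G$. This is a routine consequence of the explicit Iwasawa formula \eqref{iwasawa} together with the smoothness of $f$, and the clause $\Re(\lambda_i-\lambda_{i+1})\gg 1$ (allowed to depend on the order of $\calD$) is precisely what absorbs the extra polynomial factors introduced by iterated differentiation of $f(g\overline{n}_i(x))$ in $g$.
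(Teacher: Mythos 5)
Your argument follows the paper's proof almost verbatim: the vanishing of the boundary terms in the integration by parts via the Iwasawa decomposition \eqref{iwasawa} and the $P_G$-equivariance of $f$, and the interchange of $\calD'$ with the integral via a dominated-convergence bound, are exactly the two ingredients the paper uses. Your treatment of the second equality through the one-parameter group $\overline{n}_i(t)\overline{n}_i(x)=\overline{n}_i(t+x)$ is a correct (and slightly more explicit) rendering of the identity $\partial_x[f(g\overline{n}_i(x))]=\varepsilon^{i+1,i}[f(g\overline{n}_i(x))]$ that the paper records just before the lemma.

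One sentence in your final step is wrong as a mechanism, though the conclusion survives. You claim that iterated $g$-differentiation of $f(g\overline{n}_i(x))$ introduces extra polynomial factors in $x$ that are ``absorbed'' by taking $\Re(\lambda_i-\lambda_{i+1})\gg1$ large depending on the order of $\calD$. The troublesome polynomial factors (entries of $\overline{n}_i(x)$ appearing through the chain rule) grow as $x\to\pm\infty$, and there the weight $|x|^{\Re(\lambda_i-\lambda_{i+1})-1}$ also \emph{grows}; enlarging $\Re(\lambda_i-\lambda_{i+1})$ therefore makes matters worse at infinity, not better. The correct order of operations — and what the paper does — is to first use the equivariance to write $f(g\overline{n}_i(x))=(1+x^2)^{(\lambda_{i+1}-\lambda_i-1)/2}f(gk(x))$ and only then apply $\calD'$ in $g$: the prefactor is independent of $g$, $k(x)$ ranges over the compact group $K$, so $|\calD'[f(gk(x))]|$ is bounded uniformly for $g$ in a compact set, no power of $x$ appears, and the integrand is dominated by $c\,(1+x^2)^{\Re(\lambda_{i+1}-\lambda_i-1)/2}|x|^{\Re(\lambda_i-\lambda_{i+1})-1}\sim|x|^{-2}$ independently of the order of $\calD'$. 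With that correction your proof is complete.
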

\begin{proof}
	For the first identity we only have to argue that there are no boundary terms when integrating by parts. Decomposing $\overline{n}_i(x)=k(x)a(x)n(x)\in KAN$ according to \eqref{iwasawa} yields
	\begin{align*}
		|x|_{\xi_i+\xi_{i+1}}^{\lambda_i-\lambda_{i+1}-1}f(g\overline{n}_i(x))&=|x|_{\xi_i+\xi_{i+1}}^{\lambda_i-\lambda_{i+1}-1}(1+x^2)^{\frac{\lambda_{i+1}-\lambda_{i}-1}{2}}f(gk(x))\\
		&=|x|_{\xi_i+\xi_{i+1}}^{-1}(1+x^2)^{-\frac{1}{2}}\Big(\frac{|x|}{\sqrt{1+x^2}}\Big)^{\lambda_i-\lambda_{i+1}}f(gk(x)).
	\end{align*}
The right hand side clearly vanishes at $x=\pm\infty$ giving us no boundary term. It remains to show the last statement. For $g$ in a compact subset of $G$ we get, using the Iwasawa decomposition, that
	\begin{align*}
		\Big||x|_{\xi_i+\xi_{i+1}}^{\lambda_i-\lambda_{i+1}-1}\calD'[f(g\overline{n}_i(x))]\Big|&=\Big ||x|_{\xi_i+\xi_{i+1}}^{\lambda_i-\lambda_{i+1}-1}(1+x^2)^{\frac{\lambda_{i+1}-\lambda_{i}-1}{2}}\calD'[f(gk(x))]\Big|\\
		&\leq c(1+x^2)^{\frac{\Re(\lambda_{i+1}-\lambda_{i}-1)}{2}}|x|^{\Re(\lambda_i-\lambda_{i+1}-1)},
	\end{align*}
where $c$ is a constant depending on $f$, $g$ and $\mathcal{D}'$. The right hand side is integrable, allowing us to pull the differential operator out of the integral.
\end{proof}

\subsection{Bernstein--Sato identities}

In addition to the differential operators $\varepsilon^{i,j}$ arising from the right regular representation, we consider the left regular representation $\ell$ of $G$ given by $\ell(g)f(x)=f(g^{-1}x)$ ($g,x\in G$) and the corresponding operators given in terms of the differentiated representation $d\ell$:
$$ \varepsilon_{i,j}:=-d\ell(E_{j,i}) = \sum_{k=1}^{n+1}g_{i,k}\partial_{g_{j,k}}  \qquad (1\leq i,j\leq n+1). $$
We further abbreviate
$$ \lambda_{i,j}:=\lambda_i-\lambda_j-1 \qquad \mbox{and} \qquad \nu_{i,j}:=\nu_i-\nu_j-1 \qquad (1\leq i,j\leq n+1). $$
For $1\leq i,j\leq n+1$ we define the Weyl group element $w_{i,j}$ by
\[
w_{i,j}=\begin{cases}
	w_iw_{i+1}\cdots w_{j}, &\text{for } i< j, \\
	w_iw_{i-1}\cdots w_j, & \text{for }  i> j,\\
	w_i, & \text{for } i=j.
\end{cases}
\]

Now, consider the differential operators
\[
\mathcal{D}_i(\lambda)=(-1)^{i+1}
\det\left( \begin{matrix}
	\Phi_1 & \lambda_{1,i+1} & 0 & \cdots & 0\\
	r(w_{1,1})\Phi_1 &\varepsilon^{2,1}& \lambda_{2,i+1}& \cdots &0\\
	\vdots & \vdots & \vdots & \ddots & \vdots\\
	r(w_{i-1},1)\Phi_1 & \varepsilon^{i,1} & \varepsilon^{i,2}&\cdots  & \lambda_{i,i+1} \\
	r(w_{i,1})\Phi_1 & \varepsilon^{i+1,1} & \varepsilon^{i+1,2}&\cdots & \varepsilon^{i+1,i} 
\end{matrix}\right)\quad (1\leq i\leq n),
\]
\[
\mathcal{C}_i(\nu)=(-1)^{n-i}
\det\left( \begin{matrix}
	\Psi_1 & \nu_{n,i} & 0 & \cdots & 0\\
	\ell(w_{n-1,n-1})\Psi_1 &\varepsilon_{n-1,n}& \nu_{n-1,i}& \cdots &0\\
	\vdots & \vdots & \vdots & \ddots & \vdots\\
	\ell(w_{i+1,n-1})\Psi_1& \varepsilon_{i+1,n} & \varepsilon_{i+1,n-1}&\cdots  & \nu_{i+1,i} \\
	\ell(w_{i,n-1})\Psi_1& \varepsilon_{i,n} & \varepsilon_{i,n-1}&\cdots & \varepsilon_{i,i+1} 
\end{matrix}\right)\quad (1\leq i\leq n-1).
\]
Moreover, we write $r(w)\calC_i(\nu)$ for the same determinant as for $\calC_i(\nu)$ but where $r(w)$ is applied to all the $\Psi_1$ in the first column, and to ease notation we also put $\calC_n(\nu)=\Psi_1$. Using this notation, we additionally consider the following differential operators:
\[
\mathcal{L}_i(\lambda,\nu)=(-1)^{i+1}
\det\left( \begin{matrix}
	\calC_{n-i}(\nu) & \lambda_{1,i+1} & 0 & \cdots & 0\\
	r(w_{1,1})\calC_{n-i}(\nu) &\varepsilon^{2,1}& \lambda_{2,i+1}& \cdots &0\\
	\vdots & \vdots & \vdots & \ddots & \vdots\\
	r(w_{i-1},1)\calC_{n-i}(\nu) & \varepsilon^{i,1} & \varepsilon^{i,2}&\cdots  & \lambda_{i,i+1} \\
	r(w_{i,1})\calC_{n-i}(\nu) & \varepsilon^{i+1,1} & \varepsilon^{i+1,2}&\cdots & \varepsilon^{i+1,i} 
\end{matrix}\right)\quad (1\leq i\leq n-1),
\]
\[
\mathcal{P}_i(\lambda,\nu)=(-1)^{i+1}
\det\left( \begin{matrix}
	\calC_{n+1-i}(\nu) & \lambda_{1,i+1} & 0 & \cdots & 0\\
	r(w_{1,1})\calC_{n+1-i}(\nu) &\varepsilon^{2,1}& \lambda_{2,i+1}& \cdots &0\\
	\vdots & \vdots & \vdots & \ddots & \vdots\\
	r(w_{i-1},1)\calC_{n+1-i}(\nu) & \varepsilon^{i,1} & \varepsilon^{i,2}&\cdots  & \lambda_{i,i+1} \\
	r(w_{i,1})\calC_{n+1-i}(\nu) & \varepsilon^{i+1,1} & \varepsilon^{i+1,2}&\cdots & \varepsilon^{i+1,i} 
\end{matrix}\right)\quad (1\leq i\leq n).
\]
As the entries of these determinants are non-commuting, we specify that this determinant should be considered as
\[
\det A=\sum_{\sigma\in \mathbb{S}_n} \sgn(\sigma)a_{\sigma(1),1}a_{\sigma(2),2}\cdots a_{\sigma(n),n}.
\]
Note that $\calD_i(\lambda)$ has order $i$, $\calC_i(\nu)$ has order $n-i$, $\calL_i(\lambda,\nu)$ has order $2i$ and $\calP_i(\lambda,\nu)$ has order $2i-1$.

We now show that all these differential operators arise from multiplication operators composed from left and right by appropriate Knapp--Stein intertwining operators. For a $\CC$-valued function $\Phi$ let $M_\Phi$ be the multiplication operator given by $M_\Phi f=\Phi f$.

\begin{proposition} \label{diff op}
For generic $(\lambda,\nu)\in\CC^{n+1}\times\CC^n$ such that $\Re(\lambda_1)\gg\ldots\gg \Re(\lambda_{n+1})$ and $\Re(\nu_n)\gg  \ldots \gg \Re(\nu_1)$ the differential operators can be expressed in the following way:
\begin{align*}
		\calD_1(\lambda)&=\lambda_{1,2}T_{w_1(\xi,-\lambda)-(e_1,e_1)}^{w_1}\circ M_{\Phi_1}\circ \Big(T^{w_1}_{w_1(\xi,-\lambda)}\Big)^{-1},\\
		\calD_i(\lambda)&=\lambda_{i,i+1}T_{w_i(\xi,-\lambda)-(e_i,e_i)}^{w_i}\circ \calD_{i-1}(w_i\lambda)\circ \Big(T^{w_i}_{w_i(\xi,-\lambda)}\Big)^{-1} && (2\leq i\leq n),\\
		\calC_{n-1}(\nu)&=\nu_{n,n-1}S_{w_{n-1}(\eta,\nu)+(e_n,e_n)}^{w_{n-1}}\circ M_{\Psi_1}\circ \Big(S^{w_{n-1}}_{w_{n-1}(\eta,\nu)}\Big)^{-1},\\
		\calC_i(\nu)&=\nu_{i+1,i}S_{w_i(\eta,\nu)+(e_{i+1},e_{i+1})}^{w_i}\circ \calC_{i+1}(w_i\nu)\circ \Big(S^{w_i}_{w_i(\eta,\nu)}\Big)^{-1} && (1\leq i\leq n-2),\\
  \calP_1(\lambda)&=\lambda_{1,2}T^{w_1}_{-w_1(\xi,-\lambda)-(e_1,e_1)}\circ M_{\Psi_1}\circ\Big (T^{w_1}_{w_1(\xi,-\lambda)}\Big)^{-1},\\
  \calL_i(\lambda,\nu)&=\nu_{n+1-i,n-i}S^{w_{n-i}}_{w_{n-i}(\eta,\nu)+(e_{n+1-i},e_{n+1-i})}\circ \calP_{i}(\lambda,w_{n-i}\nu)\circ \Big(S^{w_{n-i}}_{w_{n-i}(\eta,\nu)}\Big)^{-1}, && (1\leq i \leq n-1),\\
  \calP_i(\lambda,\nu)&=\lambda_{i,i+1}T^{w_i}_{w_i(\xi,-\lambda)-(e_i,e_i)}\circ \calL_{i-1}(w_i\lambda,\nu)\circ \Big(T^{w_i}_{w_i(\xi,-\lambda)}\Big)^{-1}, && (2\leq i\leq n).
	\end{align*}
\end{proposition}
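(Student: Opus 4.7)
The plan is to prove the six identities by a coordinated induction on the order of the operators. The base cases $\calD_1$, $\calP_1$, and $\calC_{n-1}$ are treated by a direct $\SL(2)$ computation mirroring Section~\ref{eksempel diff operator}, while the higher-order identities are built up one level at a time using Lemma~\ref{partiel integration}. Because the recursions for $\calL_i$ and $\calP_i$ feed into each other, these two inductions are carried out in parallel.

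For the base case $\calD_1$, I would write $T^{w_1}$ as the one-dimensional integral~\eqref{eq:SimpleTranspositionKSonNbar}, apply the identity $\Phi_1(gw_1\overline{n}_1(x))=\Phi_1(gw_1)+x\Phi_1(g)$ from Section~\ref{sec:IdentitiesMinors} to split the integrand, absorb the constant-in-$x$ piece (which is $r(w_1)\Phi_1(g)=r(w_{1,1})\Phi_1(g)$) into the outer Knapp--Stein operator, and treat the linear-in-$x$ piece via Lemma~\ref{partiel integration} to extract the prefactor $\lambda_{1,2}$ and produce the derivative $\varepsilon^{2,1}$. Reassembling yields the $2\times 2$ determinant of the statement. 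The cases $\calP_1$ and $\calC_{n-1}$ are structurally identical, with $\Psi_1$ in place of $\Phi_1$ for $\calP_1$, and the $H$-side integral $S^{w_{n-1}}$ together with the left regular derivatives $\varepsilon_{j,k}$ for $\calC_{n-1}$.

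For the inductive step $\calD_{i-1}(w_i\lambda)\leadsto\calD_i(\lambda)$, conjugation by $T^{w_i}$ amounts to substituting $g\mapsto gw_i\overline{n}_i(x)$ inside the existing determinant and integrating against $|x|^{\lambda_i-\lambda_{i+1}-1}_{\xi_i+\xi_{i+1}}$. By the Section~\ref{sec:IdentitiesMinors} identities, the first-column entries $r(w_{k-1,1})\Phi_1$ for $k<i$ are essentially unchanged under this substitution, while for $k=i$ the entry picks up an additional linear-in-$x$ contribution. The constant-in-$x$ part of the integrand, together with the parameter shift $-(e_i,e_i)$ in the outer intertwiner, reproduces the upper $i\times(i+1)$ block of the target matrix of $\calD_i(\lambda)$, including the new rightmost column ending in $\lambda_{i,i+1}$. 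The linear-in-$x$ part, processed by Lemma~\ref{partiel integration}, supplies the overall prefactor $\lambda_{i,i+1}$ and produces the new bottom row $r(w_{i,1})\Phi_1,\varepsilon^{i+1,1},\ldots,\varepsilon^{i+1,i}$, so that the full expression matches Laplace expansion of the target determinant along its last column. The remaining recursions for $\calC_i$, $\calL_i$, and $\calP_i$ follow by the same template, using $S^{w_j}$ in place of $T^{w_j}$ whenever the recursion is on the $H$-side, and with $\calC_{n-i}(\nu)$ or $\calC_{n+1-i}(\nu)$ replacing $\Phi_1$ in the first column of $\calL_i$, $\calP_i$.

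The main obstacle is bookkeeping the non-commutativity of the determinant entries. The convention $\det A=\sum_\sigma\sgn(\sigma)a_{\sigma(1),1}\cdots a_{\sigma(n),n}$ fixes a left-to-right product order on operators that do not pairwise commute, so one must verify that the nested composition produced by iterating the conjugation-by-$T^{w_i}$ step lines up with this column order. A second layer of care is required for $\calL_i$ and $\calP_i$, whose first-column entries are themselves differential operators built from the left regular representation; compatibility with the right intertwiner $T^{w_i}$ used to step up ultimately follows from the commutation of the left and right regular representations, but verifying this combinatorial matching is where most of the work lies.
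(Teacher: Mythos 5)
Your proposal follows essentially the same route as the paper: base cases via the rank-one computation combining the minor identities of Section~\ref{sec:IdentitiesMinors} with Lemma~\ref{partiel integration}, the inductive step via the observation that only the last row of the determinant fails to commute with right translation by $\overline{n}_i(x)$ so that multilinearity splits the integrand into a constant and a linear piece matching the Laplace expansion of the target determinant along its last column, and the commutation of the left and right regular representations (equivalently of $T^{w_i}$ and $S^{w_j}$) to handle $\calL_i$ and $\calP_i$. The only caveat is bookkeeping: whether the constant-in-$x$ or the linear-in-$x$ piece is the one requiring integration by parts depends on whether you split before or after the substitution $x\mapsto x^{-1}$ built into \eqref{eq:SimpleTranspositionKSonNbar} (the paper splits after, so there the constant piece $\Phi_1(g)$ carries $\varepsilon^{2,1}$ and the linear piece $x\,r(w_1)\Phi_1(g)$ absorbs into $\lambda_{1,2}$), and this must be kept consistent when reassembling the determinant.
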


Note that for generic $\lambda$ the intertwining operator $T^w_{w(\xi,-\lambda)}$ is invertible, so $(T^w_{w(\xi,-\lambda)})^{-1}$ exists. The same is true for $S^w_{w(\eta,\nu)}$.

\begin{proof}
We start by showing the formulas for the family $\calD_i(\lambda)$. For $i=1$ and $f\in \pi_{w_1(\xi,-\lambda)}$, we imitate the argument in Section~\ref{eksempel diff operator}, using \eqref{eq:SimpleTranspositionKSonNbar}, the identity $\Phi_1(g\overline{n}_1(x))=\Phi_1(g)+x(r(w_1)\Phi_1)(g)$ (see Section~\ref{sec:IdentitiesMinors}) and Lemma~\ref{partiel integration}, to find
\begin{equation*}
    \lambda_{1,2}T^{w_1}_{w_1(\xi,-\lambda)-(e_1,e_1)}(\Phi_1f)(g)=-[\Phi_1(g)\varepsilon^{2,1}-r(w_1)\Phi_1(g)\lambda_{1,2}]T^{w_1}_{w_1(\xi,-\lambda)}f(g).
\end{equation*}
This is the claimed formula for $\calD_1(\lambda)=-[\Phi_1(g)\varepsilon^{2,1}-\lambda_{1,2}r(w_1)\Phi_1(g)]$. Next, we show the formula for $\calD_i(\lambda)$ ($2\leq i\leq n$), i.e. for $f\in \pi_{w_i(\xi,-\lambda)}$ we show
$$ \lambda_{i,i+1}T_{w_i(\xi,-\lambda)-(e_i,e_i)}^{w_i}\circ\calD_{i-1}(w_i\lambda)f = \calD_i(\lambda)\circ T_{w_i(\xi,-\lambda)}^{w_i}f. $$
In view of \eqref{eq:SimpleTranspositionKSonNbar}, the left hand side evaluated at $g\in G$ is given by
$$ (-1)^{\xi_i}\lambda_{i,i+1}\int_\RR |x|_{\xi_i+\xi_{i+1}+1}^{\lambda_i-\lambda_{i+1}-2}(\calD_{i-1}(w_i\lambda)f)(g\overline{n}_i(x))\,dx. $$
Consider the term $(\calD_{i-1}(w_i\lambda)f)(g\overline{n}_i(x))$ with $\calD_{i-1}(w_i\lambda)$ given by the determinant formula from the beginning of this section. Using the formulas in Section~\ref{sec:IdentitiesMinors}, we find that for $j\leq i-1$ we have $[r(w_{j,1})\Phi_1](g\overline{n}_{i}(x))=[r(w_{j,1})\Phi_1](g)+\delta_{i-1,j}x[r(w_i)r(w_{j,1})\Phi_1](g)$. Moreover, for $k<j\leq i$ we have by the chain rule $(\varepsilon^{j,k}f)(g\overline{n}_{i}(x))=(\varepsilon^{j,k}+\delta_{i,j}x \varepsilon^{j+1,k} )[f(g\overline{n}_{i}(x))]$. Thus, only the last row of the matrix defining $\calD_{i-1}(w_i\lambda)$ does not commute with the right translation by $\overline{n}_i(x)$, and by linearity in the last row of this determinant we get
	\[
	(\calD_{i-1}(w_{i}\lambda)f)(g\overline{n}_{i}(x))=\calD_{i-1}(w_{i}\lambda) [f(g\overline{n}_{i}(x))]+x\widetilde{\calD}_{i-1}(w_{i}\lambda) [f(g\overline{n}_{i}(x))],
	\]
where $\widetilde{\calD}_{i-1}(w_{i}\lambda)$ is the determinant of the same matrix as the one for $\calD_{i-1}(w_{i}\lambda)$, but where $r(w_{1,i-1})$ is replaced by $r(w_{1,i})$ and $\varepsilon^{i,k}$ is replaced by $\varepsilon^{i+1,k}$ for $1\leq k\leq i-1$. Now, using the same steps as for $i=1$, but with $\calD_{i-1}(w_i\lambda)+x\widetilde{\calD}_{i-1}(w_i\lambda)$ playing the role of $\Phi_1(g)+xr(w_1)\Phi_1(g)$, and using Lemma~\ref{partiel integration}, we get
	\[
	\lambda_{i,i+1}T_{w_{i}(\xi,-\lambda)-(e_{i},e_{i})}^{w_i}(\calD_{i-1}(w_{i}\lambda)f)(g)
	=-\left[\calD_{i-1}(w_{i}\lambda)\varepsilon^{i+1,i}-\widetilde{\calD}_{i-1}(w_{i}\lambda)\lambda_{i,i+1} \right]T_{w_{i}(\xi,-\lambda)}^{w_i}f(g).
	\]
Computing the determinant defining $\calD_i(w_i\lambda)$ along the last column shows that
$$ -\calD_{i-1}(w_{i}\lambda)\varepsilon^{i+1,i}+\widetilde{\calD}_{i-1}(w_{i}\lambda)\lambda_{i,i+1}=\calD_i(\lambda). $$
The proof for $\calC_i(\nu)$ is identical, using $S_{\eta,\nu}^w$ instead of $T_{\xi,\lambda}^w$.

To prove the identities for $\calL_i(\lambda,\nu)$ and $\calP_i(\lambda,\nu)$, we observe that the operators $T^{w_i}$ and $S^{w_j}$ commute. Therefore, we can first apply all operators $S^{w_j}$, which results in respectively $\calC_{n+1-i}(\nu)$ and $\calC_{n-i}(\nu)$. These have to be conjugated by all the operators $T^{w_i}$. For this, we note that $(\calC_i(\nu)f)(g\overline{n}_j(x))=[\calC_i(\nu)+\delta_{1,j}xr(w_1)\calC_i(\nu)]f(g\overline{n}_j(x))$ since $\varepsilon_{i,j}$ is the derivative of the left regular action and thus commutes with the right regular action, and since $\Psi_1$ satisfies the same identity by multi-linearity of the determinant. Obtaining the claimed expressions for $\calP_i(\lambda,\nu)$ and $\calL_i(\lambda,\nu)$ then follows in exactly the same manner as for $\calD_i(\lambda)$, but where $\Phi_1$ is replaced by the corresponding $\calC_i(\nu)$.
\end{proof}

\begin{remark}
Most of the differential operators defined above are not $H$-intertwining since the multiplication maps $M_{\Phi_i}$ and $M_{\Psi_j}$ are in general not $H$-intertwining. However, since $\Phi_1$ and $\Psi_n$ are $H$-equivariant, the multiplication operators $M_{\Phi_1}$ and $M_{\Psi_n}$ are $H$-intertwining and therefore this procedure generates differential $H$-intertwining operators. For the polynomial $\Phi_1$ we obtain the differential operators $\calD_i(\lambda)$, and using $\Psi_n$ we obtain another family of differential operators $\calF_i(\lambda)$ defined by
\begin{align*}
    \calF_n(\lambda)&=\lambda_{n,n+1}T_{w_n(\xi,-\lambda)-(\mathds{1}-e_{n+1},\mathds{1}-e_{n+1})}^{w_n}\circ M_{\Psi_n}\circ \Big(T^{w_{n}}_{w_n(\xi,-\lambda)}\Big)^{-1},\\
		\calF_i(\lambda)&=\lambda_{i,i+1}T_{w_i(\xi,-\lambda)-(\mathds{1}-e_{i+1},\mathds{1}-e_{i+1})}^{w_i}\circ \calF_{i+1}(w_i\lambda)\circ \Big(T^{w_i}_{w_i(\xi,-\lambda)}\Big)^{-1} && (1\leq i\leq n-1),\\
\end{align*}
and given explicitly as 
\[
\mathcal{F}_{i}(\lambda)=(-1)^{n+1-i}
\det\left( \begin{matrix}
	\Psi_n & \lambda_{i,n+1} & 0 & \cdots & 0\\
	r(w_{n,n})\Psi_n &\widetilde{\varepsilon}^{n+1,n}& \lambda_{i,n}& \cdots &0\\
	\vdots & \vdots & \vdots & \ddots & \vdots\\
	r(w_{i+1,n})\Psi_n & \widetilde{\varepsilon}^{n+1,i+1} & \widetilde{\varepsilon}^{n,i+1}&\cdots  & \lambda_{i,i+1} \\
	r(w_{i,n})\Psi_n& \widetilde{\varepsilon}^{n+1,i} & \widetilde{\varepsilon}^{n,i}&\cdots & \widetilde{\varepsilon}^{i+1,i} 
\end{matrix}\right)\quad (1\leq i\leq n),
\]
where $\widetilde{\varepsilon}^{i,j}=(-1)^{i+j+1}\varepsilon^{i,j}$. The proof for the expression follows in the exact same manner as for $\calD_i$ but with a little twist. As for $k\geq i$ we have $[r(w_{k,n})\Psi_n](g\overline{n}_{i-1}(x))=[r(w_{k,n})\Psi_n](g)+\delta_{i,j}x[r(w_{k-1}r(w_{k,n})\Psi_n](g) $ and $ (\varepsilon^{k+1,i}f)(g\overline{n}_{i-1}(x))\allowbreak=[\varepsilon^{k+1,i}-x\varepsilon^{k+1,i-1}] \allowbreak f(g\overline{n}_{i-1}(x))$, we find opposite signs in the last row when trying to use multilinearity as above. However,  changing to the $\sim$-notation we can rewrite the latter formula as $(\widetilde{\varepsilon}^{k+1,i}f)(g\overline{n}_{i-1}(x))\allowbreak=[\widetilde{\varepsilon}^{k+1,i}+x\widetilde{\varepsilon}^{k+1,i-1}] f(g\overline{n}_{i-1}(x))$ and the proof follows in the same manner as for $\calD_i(\lambda)$.

Composing $\calD_i$ and $\calF_i$ with the restriction map, restricting functions from $G$ to $H$, produces differential symmetry breaking operators between principal series of $G$ and $H$. In a subsequent paper, these will be studied in more detail and in particular related to residues of the family $A_{\xi,\lambda}^{\eta,\nu}$ of symmetry breaking operators.

\end{remark}
\begin{theorem}[Bernstein--Sato identities] \label{Bernstein Sato}
	The following Bernstein--Sato identities hold:
	\begin{align*}
		\mathcal{D}_i(\lambda)K_{\xi,\lambda}^{\eta,\nu}&=p_{\mathcal{D}_i}(\lambda,\nu)K_{(\xi+e_{i+1},\lambda+e_{i+1})}^{\eta,\nu},\\
  \calP_i(\lambda,\nu)K_{\xi,\lambda}^{\eta,\nu}&=p_{\calP_i}(\lambda,\nu)K_{(\xi,\lambda)+(e_{i+1},e_{i+1})}^{(\eta,\nu)+(e_{n+1-i},e_{n+1-i})},\\
    \calL_i(\lambda,\nu)K_{\xi,\lambda}^{\eta,\nu}&=p_{\calL_i}(\lambda,\nu)K_{(\xi,\lambda)+(e_{i+1},e_{i+1})}^{(\eta,\nu)+(e_{n-i},e_{n-i})},
	\end{align*}
	where
	\begin{align*}
		p_{\calD_i}(\lambda,\nu)&=(-1)^{i+1}\prod_{j=n+1-i}^n (\nu_{j}-\lambda_{i+1}-\tfrac{1}{2} ),\\
  p_{\calP_i}(\lambda,\nu)&= \prod_{j=1}^{i}(\lambda_j-\nu_{n+1-i}-\tfrac{1}{2})\prod_{k=n+2-i}^n(\nu_k-\lambda_{i+1}-\tfrac{1}{2}),\\
  p_{\calL_i}(\lambda,\nu) &= \prod_{k=1}^i(\lambda_k-\nu_{n-i}-\tfrac{1}{2})\prod_{j=n+1-i}^n(\nu_j-\lambda_{i+1}-\tfrac{1}{2}).
	\end{align*}
\end{theorem}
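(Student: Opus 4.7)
The proof follows the strategy laid out in Section~\ref{Eksempel} for the case $n=1$, using the recursive descriptions of $\calD_i$, $\calP_i$ and $\calL_i$ in Proposition~\ref{diff op} together with the functional identities of Theorem~\ref{main theorem} and Remark~\ref{alternativ beskrivelse}. Since each of these differential operators is a composition of a multiplication operator (by $\Phi_1$, by $\Psi_1$, or by a previously constructed operator of the same family) with Knapp--Stein intertwiners for $G$ and $H$, its action on $K_{\xi,\lambda}^{\eta,\nu}$ splits into two simple ingredients: multiplication by $\Phi_1$ or $\Psi_1$ shifts parameters explicitly (visible directly from the formula for $K_{\xi,\lambda}^{\eta,\nu}$, since these minors coincide with the first and the $(n+1)$-th factor of the kernel and their exponents are the spectral parameters $s_1$ and $t_1$), while each Knapp--Stein intertwiner acts by the functional identities of Theorem~\ref{main theorem}, producing a constant multiple of a kernel with shifted parameters.

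For $\calD_i(\lambda)$, I would proceed by induction on $i=1,\dots,n$. The base case $i=1$ mirrors Section~\ref{eksempel funktionalligning}: apply $(T^{w_1}_{w_1(\xi,-\lambda)})^{-1}$ to $K_{\xi,\lambda}^{\eta,\nu}$ to obtain a constant multiple of $K_{w_1(\xi,\lambda)}^{\eta,\nu}$ via Remark~\ref{alternativ beskrivelse}, multiply by $\Phi_1$ to reach $K_{w_1(\xi,\lambda)+(e_1,e_1)}^{\eta,\nu}$, and apply the outer $T^{w_1}$ via Theorem~\ref{main theorem}(1) to obtain a multiple of $K_{\xi+e_2,\lambda+e_2}^{\eta,\nu}$ (using $w_1e_1=e_2$). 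For the inductive step, the recursive formula
\[
\calD_i(\lambda) = \lambda_{i,i+1}\,T^{w_i}_{w_i(\xi,-\lambda)-(e_i,e_i)} \circ \calD_{i-1}(w_i\lambda) \circ (T^{w_i}_{w_i(\xi,-\lambda)})^{-1}
\]
leads to a three-stage computation: first $(T^{w_i})^{-1}$ sends $K_{\xi,\lambda}^{\eta,\nu}$ to a multiple of $K_{w_i(\xi,\lambda)}^{\eta,\nu}$; then the induction hypothesis yields a multiple of $K_{w_i(\xi,\lambda)+(e_i,e_i)}^{\eta,\nu}$ with scalar $p_{\calD_{i-1}}(w_i\lambda,\nu)$; and finally the outer $T^{w_i}$ produces the desired multiple of $K_{\xi+e_{i+1},\lambda+e_{i+1}}^{\eta,\nu}$. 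Since every $\lambda$-argument appearing in $p_{\calD_{i-1}}$ is $\lambda_i$, which under $w_i$ becomes $\lambda_{i+1}$, $p_{\calD_{i-1}}(w_i\lambda,\nu)$ already supplies the factors indexed by $j=n+2-i,\dots,n$ in $p_{\calD_i}$; the remaining new factor $(\nu_{n+1-i}-\lambda_{i+1}-\tfrac12)$ must come from the prefactor $\lambda_{i,i+1}$ combined with the ratio of the two functional-identity constants $c_i$ and $\widetilde{c}_i$ evaluated at the appropriately shifted parameters.

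The identities for $\calP_i$ and $\calL_i$ then follow from a coupled induction of the same shape, alternating Theorem~\ref{main theorem}(1) (for the $T^{w_i}$-conjugations in the recursion for $\calP_i$) and Theorem~\ref{main theorem}(2) (for the $S^{w_{n-i}}$-conjugations in the recursion for $\calL_i$). The base case $\calP_1$ is analogous to $\calD_1$ but with $M_{\Psi_1}$ in place of $M_{\Phi_1}$; the explicit formula for the kernel shows that multiplication by $\Psi_1$ induces the simultaneous shift $(\xi,\lambda,\eta,\nu)\mapsto(\xi+e_1,\lambda+e_1,\eta+e_n,\nu+e_n)$, which after the $T^{w_1}$-conjugation becomes the shift $((+e_2,+e_2),(+e_n,+e_n))$ required by $p_{\calP_1}$. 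Each subsequent inductive step produces exactly one new linear factor, on the $\lambda$-side from a $T$-conjugation or on the $\nu$-side from an $S$-conjugation, extending the product forms of $p_{\calP_i}$ and $p_{\calL_i}$ one factor at a time.

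The main obstacle is the combinatorial bookkeeping: verifying that at each inductive step the ratio of the functional-identity constants $c_i$, $\widetilde{c}_i$, $d_i$ evaluated at the shifted parameters, combined with the relevant prefactor $\lambda_{i,i+1}$ or $\nu_{n+1-i,n-i}$, collapses via repeated use of $\Gamma(z+1)=z\Gamma(z)$ to the single new linear factor called for in the statement, with no extraneous gamma factors, powers of $\pi$ or signs. This requires careful tracking of how $w_i$ (respectively $w_{n-i}$) permutes the characters $\chi_j$ and $\psi_k$ inside those constants, and of which $L$-factor ratios have to be grouped to produce a clean polynomial answer. A minor, more routine step is to justify the identities as equalities of distributions on all of $\CC^{n+1}\times\CC^n$: in the non-empty open region where $K_{\xi,\lambda}^{\eta,\nu}$ is locally integrable and smooth enough for the differential operator to act in the classical sense and where the Knapp--Stein integrals converge absolutely, both sides are meromorphic in $(\lambda,\nu)$, so the identity established there extends by analytic continuation to the full parameter space.
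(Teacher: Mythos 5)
Your proposal is correct and follows essentially the same route as the paper, whose proof of Theorem~\ref{Bernstein Sato} is precisely ``use Theorem~\ref{main theorem} and the argument of Subsection~\ref{eksempel funktionalligning} for all simple transpositions'': conjugation formulas from Proposition~\ref{diff op}, explicit parameter shifts under multiplication by $\Phi_1$ and $\Psi_1$, ratios of the functional-identity constants, and analytic continuation. In fact you supply more of the inductive bookkeeping (including the correct identification of which single new linear factor appears at each step of the coupled $\calP_i$/$\calL_i$ recursion) than the paper's one-sentence proof does.
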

\begin{proof}
Using Theorem \ref{main theorem} and the same argument as in Subsection \ref{eksempel funktionalligning} but for all simple transpositions $w_i$ we get the result.
\end{proof}

\begin{remark}
	The differential operators $\calD_i(\lambda)$, $\calC_i(\nu)$, $\calF_i(\lambda)$, $\calP_i(\lambda,\nu)$, $\calL_i(\lambda,\nu)$ are defined only in terms of the multiplication operators by $\Phi_1$, $\Psi_1$ and $\Psi_n$. This choice might seem arbitrary, but it is sufficient for our purpose: as explained in the next section the obtained Bernstein--Sato identities extend $K_{\xi,\lambda}^{\eta,\nu}$ meromorphically to all $(\lambda,\nu)\in\CC^{n+1}\times\CC^n$. However, many different choices of multiplication operators by $\Phi_i$ and $\Psi_i$ and permutations $w$ also give collections of differential operators that can extend $K_{\xi,\lambda}^{\eta,\nu}$, but there are a few immediate limitations:\\
	For a choice of $\Phi_i$ or $\Psi_i$, say $\Phi_i$, there is only one choice of simple transposition for which the composition of $M_{\Phi_i}$ with $T^{w_j}$ yields a non-trivial differential operator since $T^{w_j}$ and $M_{\Phi_i}$ commute when $j\neq i$. Moreover, if we let $n=2$ and consider every outcome of conjugating $\Phi_i$ or $\Psi_i$ with only intertwiners $T^{w_j}$ for $\pi_{\xi,-\lambda}$, then we never obtain sufficiently many differential operators in order to extend $K_{\xi,\lambda}^{\eta,\nu}$ to all $(\lambda,\nu)\in\CC^{n+1}\times\CC^n$. This suggests that at some point the intertwiners $S^{w_j}$ for $\tau_{\eta,\nu}$ must be used.
\end{remark}

\section{Analytic extension of the integral kernel}\label{Section: analytic extension}
\noindent In this section we use the Bernstein--Sato identities and the functional identities to find the appropriate normalization factors that allow to analytically extend the distribution kernel $K_{\xi,\lambda}^{\eta,\nu}$. This is done in two steps. First, we normalize the kernel so that the Bernstein--Sato identities provide an analytic continuation. In the second step we use the functional identities to identify a large set of zeros of the analytic extension and hence obtain a renormalization that still is holomorphic, but has less zeros.

\subsection{Holomorphic normalization}

Consider the normalized kernel
\[
\mathbb{K}_{\xi,\lambda}^{\eta,\nu}=\frac{K_{\xi,\lambda}^{\eta,\nu}}{\prod_{i+j\leq n+1}\Gamma(\lambda_i-\nu_j+\tfrac{1}{2})\times\prod_{i+j>n+1}\Gamma(\nu_j-\lambda_i+\tfrac{1}{2})},
\]
i.e. we normalize the kernel with gamma factors corresponding to all linear factors of the Bernstein--Sato polynomials from Theorem \ref{Bernstein Sato}. Rewriting the Bernstein--Sato identities of Theorem \ref{Bernstein Sato} in terms of $\mathbb{K}_{\xi,\lambda}^{\eta,\nu}$ gives
\begin{equation}\label{eq:BSidentitiesForBBK}
\begin{split}
	\mathcal{D}_i(\lambda)\mathbb{K}_{\xi,\lambda}^{\eta,\nu} &= (-1)^{i+1}\prod_{j=1}^{n-i}(\lambda_{i+1}-\nu_j+\tfrac{1}{2})\cdot\mathbb{K}_{\xi+e_{i+1},\lambda+e_{i+1}}^{\eta,\nu},\\
    \calP_i(\lambda,\nu)\mathbb{K}_{\xi,\lambda}^{\eta,\nu} &= \prod_{j=1}^{n-i}(\lambda_{i+1}-\nu_{j}+\tfrac{1}{2})\prod_{k=i+2}^{n+1}(\nu_{n+1-i}-\lambda_k+\tfrac{1}{2})\cdot \mathbb{K}_{\xi+e_{i+1},\lambda+e_{i+1}}^{\eta+e_{n+1-i},\nu+e_{n+1-i}},\\
    \calL_i(\lambda,\nu)\mathbb{K}_{\xi,\lambda}^{\eta,\nu}&= \prod_{j=1}^{n-1-i}(\lambda_i-\nu_j+\tfrac{1}{2})\prod_{k=i+2}^{n+1}(\nu_{n-i}-\lambda_k+\tfrac{1}{2})\cdot \mathbb{K}_{\xi+e_{i+1},\lambda+e_{i+1}}^{\eta+e_{n-i},\nu+e_{n-i}}.
\end{split}
\end{equation}

\begin{proposition}
$\mathbb{K}_{\xi,\lambda}^{\eta,\nu}$ extends analytically to a family of distributions in $\calD'(G)$ that depends holomorphically on $(\lambda,\nu)\in \CC^{n+1}\times \CC^n$. Moreover, $\mathbb{K}_{\xi,\lambda}^{\eta,\nu}\in \calD'(G)_{\xi,\lambda}^{\eta,\nu}$ for all $\xi$, $\eta$, $\lambda$ and $\nu$.
\end{proposition}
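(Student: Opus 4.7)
The strategy is to first establish that $\mathbb{K}_{\xi,\lambda}^{\eta,\nu}$ is manifestly holomorphic on the open region $R_0$ where $K_{\xi,\lambda}^{\eta,\nu}$ is locally integrable, and then to propagate holomorphy to all of $\CC^{n+1}\times\CC^n$ by iterating the Bernstein--Sato identities \eqref{eq:BSidentitiesForBBK}. On
\[
R_0=\bigl\{(\lambda,\nu):\Re(\lambda_i-\nu_j+\tfrac12)>0\text{ for }i+j\le n+1,\ \Re(\nu_j-\lambda_i+\tfrac12)>0\text{ for }i+j>n+1\bigr\},
\]
the proposition preceding this one shows that $K_{\xi,\lambda}^{\eta,\nu}$ is a locally integrable function, and uniform local integrability on compact subsets of $R_0$ (together with dominated convergence) yields holomorphic dependence on $(\lambda,\nu)$ as a family of distributions on $G$. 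Every $\Gamma$-factor entering the normalization has positive-real-part argument on $R_0$, and is therefore holomorphic and nonvanishing; hence $\mathbb{K}_{\xi,\lambda}^{\eta,\nu}=K_{\xi,\lambda}^{\eta,\nu}/N(\lambda,\nu)$ is a holomorphic family on $R_0$.

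To extend beyond $R_0$, the plan is to rearrange each identity in \eqref{eq:BSidentitiesForBBK} so as to express $\mathbb{K}$ at a one-step-shifted parameter as a differential-operator image of $\mathbb{K}$ at the original parameter. For instance, the $\calD_i$-identity becomes
\[
\mathbb{K}_{\xi+e_{i+1},\lambda+e_{i+1}}^{\eta,\nu} = \frac{(-1)^{i+1}}{\prod_{j=1}^{n-i}(\lambda_{i+1}-\nu_j+\tfrac12)}\,\calD_i(\lambda)\,\mathbb{K}_{\xi,\lambda}^{\eta,\nu},
\]
and analogous rearrangements apply for $\calP_i$ and $\calL_i$. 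If $\mathbb{K}$ is holomorphic at $(\lambda,\nu)$, then each rearranged identity produces a meromorphic extension of $\mathbb{K}$ at the shifted parameter with potential poles supported on the zero set of the polynomial denominator. Iterating, and noting that the shifts furnished by the three families $\calD_i,\calP_i,\calL_i$ span a rank-$2n$ sublattice of $\ZZ^{n+1}\oplus\ZZ^n$, one reaches from $R_0$ every point of $\CC^{n+1}\times\CC^n$ by a finite sequence of such shifts, using the openness of $R_0$ in the remaining continuous direction to handle non-integer target parameters.

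The main technical obstacle is to show that the potential polar hyperplanes accumulated along such a sequence are in fact removable singularities of $\mathbb{K}$. The key observation is that the polynomial factors appearing in \eqref{eq:BSidentitiesForBBK} are precisely of the form $\lambda_i-\nu_j+\tfrac12$ (for $i+j\le n+1$) or $\nu_j-\lambda_i+\tfrac12$ (for $i+j>n+1$), matching exactly the arguments of the $\Gamma$-factors in $N$; these are the only candidate poles of $\mathbb{K}$ not already cancelled by the normalization. Removability on each such hyperplane is obtained in two complementary ways: by the functional identities of Theorem~\ref{main theorem}, which force the differential-operator image on the right-hand side to vanish there to matching order, and by the redundancy between the $\calD_i$, $\calP_i$, $\calL_i$ families, which allows any problematic hyperplane to be avoided by routing the sequence of shifts through a different composition whose polynomial denominators do not contain it.

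Finally, the equivariance $\mathbb{K}_{\xi,\lambda}^{\eta,\nu}\in\calD'(G)_{\xi,\lambda}^{\eta,\nu}$ holds on $R_0$ by the transformation laws of the minors $\Phi_i$ and $\Psi_j$ under $P_G$ and $P_H$ recorded in Section~\ref{sec: kernel}, and since $N(\lambda,\nu)$ is independent of $g$ it passes unchanged to $\mathbb{K}$. By uniqueness of analytic continuation of distribution-valued holomorphic families, the equivariance extends to all of $\CC^{n+1}\times\CC^n$.
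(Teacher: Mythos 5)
Your overall skeleton matches the paper's: establish holomorphy on the region of local integrability, then march out of that region by iterating the Bernstein--Sato identities \eqref{eq:BSidentitiesForBBK}. But the step you yourself flag as ``the main technical obstacle'' --- removability of the hyperplanes created by dividing through by the Bernstein--Sato polynomials --- is exactly where the argument breaks down, and neither of your two proposed mechanisms closes it. The functional identities of Theorem~\ref{main theorem} relate $\mathbf{T}^{w_i}_{\xi,-\lambda}K_{\xi,\lambda}^{\eta,\nu}$ to $K_{w_i(\xi,\lambda)}^{\eta,\nu}$; they assert nothing about $\calD_i(\lambda)\mathbb{K}_{\xi,\lambda}^{\eta,\nu}$ vanishing ``to matching order'' on the zero set of $\prod_j(\lambda_{i+1}-\nu_j+\tfrac{1}{2})$, and no such statement is proved or even formulated (in the paper the functional identities enter only afterwards, to pass from $\mathbb{K}$ to the improved normalization $\mathbf{K}$). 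The rerouting idea also fails as stated: in spectral coordinates the only shifts that lower $t_{n-1}$ are $\calL_{n-1}$ and $\calD_{n-1}$, with denominators $(t_n+1)$ and $(s_n+1)$ respectively, so on the codimension-two set $\{t_n=-1\}\cap\{s_n=-1\}$ no choice of route avoids a vanishing denominator, and you would still need a separate removable-singularity argument there.

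The paper's resolution is different and more elementary, and it is the actual content of the proof. One extends variable by variable in the order $t_n$, $s_n$, $t_{n-1}$, $s_{n-1},\dots$, using respectively $\calD_n$, $\calP_n$, $\calL_{n-1}$, $\calP_{n-1},\dots$; the first two identities carry \emph{no} polynomial factor at all, and at each subsequent step the Bernstein--Sato polynomial depends only on variables that have \emph{already} been extended to all of $\CC$, never on the variable currently being extended. Its zero set is therefore a union of hyperplanes parallel to the new direction, and on each such hyperplane the identity --- valid on the open set where both sides are already defined and where the right-hand side carries the vanishing factor --- forces the left-hand side $\calL_i(s,t)\mathbb{K}_{\delta,s}^{\varepsilon,t}$ to vanish for \emph{all} values of the new variable in the current domain, by analytic continuation in that single variable. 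Hence the quotient by the polynomial is genuinely holomorphic and serves to define the extension, with no poles to remove afterwards. Your write-up is missing this ordering observation; without it the continuation you construct is only shown to be meromorphic, not holomorphic, which is the whole point of the proposition.
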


\begin{proof}
	Abusing notation, we rewrite the Bernstein--Sato identities for $\calD_n$, $\calP_i$ and $\calL_i$ in terms of the spectral parameters $\delta$, $\varepsilon$, $s$ and $t$:

 \begin{align*}
     \calD_n(s,t)\mathbb{K}_{\delta,s}^{\varepsilon,t} &= (-1)^{n+1}\mathbb{K}_{\delta+e_{n+1},s+e_{n+1}}^{\varepsilon-e_n,t-e_n},\\
     \calP_i(s,t)\mathbb{K}_{\delta,s}^{\varepsilon,t} &= \prod_{j=1}^{n-i}(s_{i+1}+t_{i+1}+\cdots+s_{n+1-j}+n+1-i-j)\\
     &\quad \times \prod_{k=i+2}^{n+1}(t_i+s_{i+1}+\cdots+t_{k-1}+k-i)\cdot\mathbb{K}_{\delta+e_{i+1}-e_i,s+e_{i+1}-e_i}^{\varepsilon,t},\\
     \calL_i(s,t)\mathbb{K}_{\delta,s}^{\eta,\nu} &= \prod_{j=1}^{n-i-1}(s_{i+1}+t_{i+1}+\cdots+s_{n+1-j}+n+1-i-j)\\
     &\quad \times \prod_{k=i+2}^{n+1}(t_{i+1}+s_{i+2}+\cdots+t_{k-1}+k-1-i)\cdot\mathbb{K}_{\delta,s}^{\varepsilon+e_{i+1}-e_i,t+e_{i+1}-e_i}.\\
 \end{align*}
The family $\mathbb{K}_{\delta,s}^{\varepsilon,t}$ is locally integrable and holomorphic in $s$ and $t$ in the region
    $$ \{(s,t):\Re(s_1),\ldots,\Re(s_n),\Re(t_1),\ldots,\Re(t_n)>0\} $$
and hence defines a holomorphic family of distributions in $\calD'(G)$ there. We extend this family first to all $t_n\in\CC$, then to all $s_n\in\CC$ then to all $t_{n-1}\in \CC$ and so on. First observe that the right hand side of the Bernstein--Sato identity for $\calD_n$ does not contain any linear factors in $s$ and $t$, so using $\calD_n(s,t)$ we can extend $\mathbb{K}_{\delta,s}^{\varepsilon,t}$ to all $t_n\in \CC$. The analogous argument using $\calP_n$ extends to all $s_n\in \CC$. Next, we would like to use $\calL_{n-1}(s,t)$ to extend to all $t_{n-1}\in \CC$. The corresponding Bernstein--Sato identity reads
\[
\calL_{n-1}(s,t)\mathbb{K}_{\delta,s}^{\varepsilon,t}=(t_n+1)\cdot \mathbb{K}_{\delta,s}^{\varepsilon+e_{n}-e_{n-1},t+e_n-e_{n-1}},
\]
so the Bernstein--Sato polynomial on the right hand side is independent of $t_{n-1}$. Therefore, its set of zeros is also independent of $t_{n-1}$. This implies that whenever the Bernstein--Sato polynomial is zero we have $\calL_{n-1}(s,t)\mathbb{K}_{\delta,s}^{\varepsilon,t}=0$ for all $t_{n-1}\in \CC$ for which $\mathbb{K}_{\delta,s}^{\varepsilon,t}$ is defined and holomorphic. Hence, we can divide $\calL_{n-1}(s,t)\mathbb{K}_{\delta,s}^{\varepsilon,t}$ by the Bernstein--Sato polynomial and obtain a holomorphic function in $t_{n-1}$ which we use to define $\mathbb{K}_{\delta,s}^{\varepsilon+e_n-e_{n-1},t+e_n-e_{n-1}}$. In this way we can extend $\mathbb{K}_{\delta,s}^{\varepsilon,t}$ holomorphically to all $t_{n-1}\in \CC$.\\
By the same argument, switching between $\calP_i(s,t)$ and $\calL_i(s,t)$, we can extend to all parameters $s_1,\dots,s_{n+1},t_1,\dots,t_n\in \CC$. Here we use that the Bernstein--Sato polynomial to extend to the next variable only depends on the variables to which we already have extended in the previous steps. This extends the kernel to all spectral parameters and therefore to all principal series parameters.
\end{proof}

\subsection{Improved renormalization}

Using the duplication formula for the gamma function we can write
\[
\Gamma(\lambda_i-\nu_j+\tfrac{1}{2})=\frac{2^{\lambda_i-\nu_j-\frac{1}{2}}}{\sqrt{\pi}}\Gamma\Big(\frac{\lambda_i-\nu_j+\frac{1}{2}+[\xi_i+\eta_j]}{2}\Big)\Gamma\Big(\frac{\lambda_i-\nu_j+\frac{3}{2}-[\xi_i+\eta_j]}{2}\Big).
\]
Recall that $[\varepsilon]$ denotes the unique representative of $\varepsilon\in\ZZ/2\ZZ$ in $\{0,1\}$. Comparing this with the analytically extended Remark \ref{alternativ beskrivelse} we get
\[
\mathbb{K}_{\xi,\lambda}^{\eta,\nu}=\beta_i(\xi,\lambda,\eta,\nu)\mathbf{T}_{w_i(\xi,-\lambda)}^{w_i}\mathbb{K}_{w_i(\xi,\lambda)}^{\eta,\nu},
\]
where
\[
\beta_i(\xi,\lambda,\eta,\nu)=\frac{\Gamma(\frac{\lambda_{i+1}-\lambda_{i}+1+[\xi_i+\xi_{i+1}]}{2})\Gamma(\frac{\lambda_{i+1}-\nu_{n+1-i}+\frac{3}{2}-[\xi_{i+1}+\eta_{n+1-i}]}{2})\Gamma(\frac{\nu_{n+1-i}-\lambda_{i}+\frac{3}{2}-[\xi_{i}+\eta_{n+1-i}]}{2})}{c\Gamma(\frac{\nu_{n+1-i}-\lambda_{i+1}+\frac{3}{2}-[\xi_{i+1}+\eta_{n+1-i}]}{2})\Gamma(\frac{\lambda_{i}-\nu_{n+1-i}+\frac{3}{2}-[\xi_{i}+\eta_{n+1-i}]}{2})}
\]
and $c$ is a product of powers of $\pi$, $2$ and $-1$. As $\mathbb{K}_{\xi,\lambda}^{\eta,\nu}$ and $\mathbf{T}_{w_i(\xi,-\lambda)}^{w_i}\mathbb{K}_{w_i(\xi,\lambda)}^{\eta,\nu}$ are holomorphic in $\lambda$ and $\nu$, we conclude that $\mathbb{K}_{\xi,\lambda}^{\eta,\nu}$ vanishes at all zeros of $\beta_i(\xi,\lambda,\eta,\nu)$. This shows that $\mathbb{K}_{\xi,\lambda}^{\eta,\nu}$ is over-normalized and the factors $\Gamma(\lambda_i-\nu_{n+1-i}+\frac{1}{2})$, $\Gamma(\nu_{n+1-i}-\lambda_{i+1}+\frac{1}{2})$ can be replaced by $\Gamma(\frac{\lambda_i-\nu_{n+1-i}+\frac{1}{2}+[\xi_i+\eta_{n+1-i}]}{2})$,  $\Gamma(\frac{\nu_{n+1-i}-\lambda_{i+1}+\frac{1}{2}+[\xi_{i+1}+\eta_{n+1-i}]}{2})$ in the normalization of $K_{\xi,\lambda}^{\eta,\nu}$. As explained below, this argument can be extended to all pairs $(\lambda_i,\nu_j)$.

Consider the renormalized kernel
\begin{equation}
    \mathbf{K}_{\xi,\lambda}^{\eta,\nu}=\frac{K_{\xi,\lambda}^{\eta,\nu}}{\prod_{j=1}^n L(\tfrac{1}{2},\chi_1\psi_j^{-1})\dots L(\tfrac{1}{2},\chi_{n+1-j}\psi_j^{-1})L(\tfrac{1}{2},\chi_{n+2-j}^{-1}\psi_j)\dots L(\tfrac{1}{2},\chi_{n+1}^{-1}\psi_{j}),},\label{eq:DefinitionRenormalizedKernel}
\end{equation}
which, by the duplication formula, is equal to the product of $\mathbb{K}_{\xi,\lambda}^{\eta,\nu}$ and
\begin{equation}\label{faktor}
\prod\limits_{j=1}^n\Big[\prod\limits_{i=1}^{n+1-j}\Gamma\Big(\frac{\lambda_i-\nu_j+\frac{3}{2}-[\xi_i+\eta_j]}{2}\Big)\Big]\Big [\prod\limits_{i=n+2-j}^{n+1}\Gamma\Big(\frac{\nu_j-\lambda_i+\frac{3}{2}-[\xi_i+\eta_j]}{2}\Big)\Big]
\end{equation}
and some power of $\pi$ and $2$.
\begin{theorem} \label{main thm}
The renormalized kernel $\mathbf{K}_{\xi,\lambda}^{\eta,\nu}$ extends analytically to a family of distributions in $\calD'(G)$ that depends holomorphically on $(\lambda,\nu)\in \CC^{n+1}\times \CC^n$. Moreover, $\mathbf{K}_{\xi,\lambda}^{\eta,\nu}\in \calD'(G)_{\xi,\lambda}^{\eta,\nu}$ for all $\xi$, $\eta$, $\lambda$ and $\nu$.
\end{theorem}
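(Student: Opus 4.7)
By the preceding proposition, the normalized kernel $\mathbb{K}_{\xi,\lambda}^{\eta,\nu}$ extends to a holomorphic family of distributions on $(\lambda,\nu)\in\CC^{n+1}\times\CC^n$ belonging to $\calD'(G)_{\xi,\lambda}^{\eta,\nu}$. Since $\mathbf{K}_{\xi,\lambda}^{\eta,\nu}$ equals $\mathbb{K}_{\xi,\lambda}^{\eta,\nu}$ multiplied by the product \eqref{faktor} (up to a nowhere vanishing factor coming from powers of $\pi$ and $2$), the plan is to show that $\mathbb{K}_{\xi,\lambda}^{\eta,\nu}$ vanishes on the polar divisor of each of the gamma factors in \eqref{faktor}. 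Once this is done, the quotient is holomorphic, and the equivariance property is inherited from $\mathbb{K}$ by analytic continuation.

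The strategy is to exhibit, for each such gamma factor, a functional identity of the form
\[
\mathbb{K}_{\xi,\lambda}^{\eta,\nu}=\beta(\xi,\lambda,\eta,\nu)\,\mathbf{T}_{w(\xi,-\lambda)}^{w}\mathbb{K}_{w^{-1}(\xi,\lambda)}^{\eta,\nu}
\]
whose right hand side is holomorphic in $(\lambda,\nu)$ and whose scalar factor $\beta$ has a zero on the hyperplane where the given gamma factor has a pole. Holomorphy of the right hand side follows from holomorphy of the normalized intertwiner $\mathbf{T}^w$ and the holomorphy of $\mathbb{K}$ already established; consequently, $\mathbb{K}_{\xi,\lambda}^{\eta,\nu}$ must vanish at every zero of $\beta$.

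The boundary pairs $(i,j)$ with $i+j\in\{n+1,n+2\}$ are already handled by the argument given immediately before the theorem statement: the simple transposition case of Theorem~\ref{main theorem}, combined with the duplication formula for the gamma function, produces a factor $\beta_i$ whose zeros are exactly the poles of $\Gamma\bigl(\tfrac{\lambda_i-\nu_{n+1-i}+\frac{3}{2}-[\xi_i+\eta_{n+1-i}]}{2}\bigr)$ and $\Gamma\bigl(\tfrac{\nu_{n+1-i}-\lambda_{i+1}+\frac{3}{2}-[\xi_{i+1}+\eta_{n+1-i}]}{2}\bigr)$, covering all gamma factors of \eqref{faktor} indexed by $i+j=n+1$ or $i+j=n+2$ as $i$ varies. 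For the remaining pairs $(i,j)$ with $i+j\leq n$ or $i+j\geq n+3$, I would apply the analytic continuation of Corollary~\ref{grimt koro} with the longer Weyl group elements $w_\pm=w_iw_{i\pm 1}\cdots$, translating the identity for $K$ into one for $\mathbb{K}$. As $i$ runs over $1,\ldots,n$, the $L$-factors appearing in the denominator of $b_i^\pm$ convert, via the duplication formula, into precisely the remaining gamma factors of \eqref{faktor}, so that the zero loci of the corresponding $\beta^\pm$ collectively exhaust the polar divisors of \eqref{faktor}.

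The principal obstacle is the bookkeeping: one must verify that every gamma factor in \eqref{faktor}, together with its prescribed parity shift $[\xi_i+\eta_j]$, is matched by a zero of some $\beta$ arising from a functional identity. The parity shift is delicate, as it requires the duplication formula to pass between the $L$-factor normalization used in the functional identities and the gamma factor form appearing in \eqref{faktor}; in particular one needs to check that no ``wrong parity'' gamma factor is produced along the way (so that the zero of $\beta$ really cancels the correct pole). Once this combinatorial matching is carried out exhaustively over all pairs $(i,j)$, the desired holomorphy of $\mathbf{K}_{\xi,\lambda}^{\eta,\nu}$ follows, and the equivariance $\mathbf{K}_{\xi,\lambda}^{\eta,\nu}\in\calD'(G)_{\xi,\lambda}^{\eta,\nu}$ is preserved under the renormalization.
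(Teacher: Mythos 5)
Your proposal matches the paper's proof: the paper likewise reduces to showing that $\mathbb{K}_{\xi,\lambda}^{\eta,\nu}$ vanishes on the polar divisors of the extra gamma factors in \eqref{faktor}, and does so by invoking the functional identities of Corollary~\ref{grimt koro} (with Remark~\ref{alternativ beskrivelse} covering the boundary cases $i+j\in\{n+1,n+2\}$), noting that the factors $b_i^\pm$ for $i=1,\ldots,n$ supply exactly the required gamma factors via the duplication formula. The bookkeeping you flag is indeed the only remaining verification, and the paper treats it at the same level of detail.
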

\begin{proof}
To prove this, it suffices to show $\mathbb{K}_{\xi,\lambda}^{\eta,\nu}$ vanishes at all the poles from (\ref{faktor}). We argue as above, but instead of using Remark \ref{alternativ beskrivelse} we apply Corollary~\ref{grimt koro}. The functions $b_i^{\pm}(\xi,\lambda,\eta,\nu)$ in Corollary~\ref{grimt koro}, where $i$ runs from $1$ to $n$, contain all the different gamma factors appearing in the new normalization.
\end{proof}

For later purpose, we also translate the Bernstein--Sato identities to the renormalized kernels $\mathbf{K}_{\xi,\lambda}^{\eta,\nu}$.

\begin{corollary}\label{cor:BSidentitiesForBFK}
    The renormalized kernel $\mathbf{K}_{\xi,\lambda}^{\eta,\nu}$ satisfies the following Bernstein--Sato identities:
    \begin{align*}
        \calD_i(\lambda)\mathbf{K}_{\xi,\lambda}^{\eta,\nu} ={}& \alpha_i(\xi,\lambda,\eta,\nu)\prod_{\substack{j=1\\\xi_{i+1}+\eta_j=0}}^n(\lambda_{i+1}-\nu_j+\tfrac{1}{2})\cdot\mathbf{K}_{\xi+e_{i+1},\lambda+e_{i+1}}^{\eta,\nu},\\
        \calP_i(\lambda,\nu)\mathbf{K}_{\xi,\lambda}^{\eta,\nu} ={}& \beta_i(\xi,\lambda,\eta,\nu)\prod_{\substack{j=1\\ \xi_{i+1}+\eta_j=0}}^{n-i}(\lambda_{i+1}-\nu_{j}+\tfrac{1}{2})\\
        & \qquad \qquad \qquad \qquad \qquad \times \prod_{\substack{k=i+2\\ \eta_{n+1-i}+\xi_k=0}}^{n+1}(\nu_{n+1-i}-\lambda_k+\tfrac{1}{2})\cdot \mathbf{K}_{\xi+e_{i+1},\lambda+e_{i+1}}^{\eta+e_{n+1-i},\nu+e_{n+1-i}},\\
        \calL_i(\lambda,\nu)\mathbf{K}_{\xi,\lambda}^{\eta,\nu} ={}& \gamma_i(\xi,\lambda,\eta,\nu)\prod_{\substack{j=1\\ \xi_i+\eta_j=0}}^{n-1-i}(\lambda_i-\nu_j+\tfrac{1}{2})\\
        &\qquad\qquad\qquad \qquad\qquad\times \prod_{\substack{k=i+2\\ \eta_{n-i}+\xi_k=0}}^{n+1}(\nu_{n-i}-\lambda_k+\tfrac{1}{2})\cdot \mathbf{K}_{\xi+e_{i+1},\lambda+e_{i+1}}^{\eta+e_{n-i},\nu+e_{n-i}},
    \end{align*}
    where $\alpha_i$, $\beta_j$ and $\gamma_i$ are nowhere vanishing entire functions of $(\lambda,\nu)$.
\end{corollary}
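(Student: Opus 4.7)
The strategy is to deduce the identities from the Bernstein--Sato identities \eqref{eq:BSidentitiesForBBK} for $\mathbb{K}_{\xi,\lambda}^{\eta,\nu}$ via the explicit ratio $\mathbf{K}/\mathbb{K}$. Write $\mathbf{K}_{\xi,\lambda}^{\eta,\nu} = C(\xi,\lambda,\eta,\nu)\,\mathbb{K}_{\xi,\lambda}^{\eta,\nu}$, where $C(\xi,\lambda,\eta,\nu)$ is the product \eqref{faktor} of $\Gamma$-factors (times a constant power of $\pi$ and $2$). Crucially, $C$ depends only on the scalar parameters $(\xi,\lambda,\eta,\nu)$ and not on the group variable $g\in G$, so the differential operators $\calD_i(\lambda)$, $\calP_i(\lambda,\nu)$, $\calL_i(\lambda,\nu)$ commute with multiplication by $C$. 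Applying them to $\mathbf{K}$ and using \eqref{eq:BSidentitiesForBBK} therefore yields
\[
\calD_i(\lambda)\mathbf{K}_{\xi,\lambda}^{\eta,\nu} = \frac{C(\xi,\lambda,\eta,\nu)}{C(\xi+e_{i+1},\lambda+e_{i+1},\eta,\nu)}\cdot(-1)^{i+1}\!\prod_{j=1}^{n-i}(\lambda_{i+1}-\nu_j+\tfrac12)\cdot\mathbf{K}_{\xi+e_{i+1},\lambda+e_{i+1}}^{\eta,\nu},
\]
and analogously for $\calP_i$ and $\calL_i$. Everything thus reduces to computing the ratio of the normalization factors under the parameter shift.

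The computation rests on a single, purely local observation. A factor $\Gamma\!\big(\tfrac{\lambda_{i'}-\nu_{j'}+\frac32-[\xi_{i'}+\eta_{j'}]}{2}\big)$ of $C$ changes under the shift $(\xi_{i'},\lambda_{i'})\mapsto(\xi_{i'}+1,\lambda_{i'}+1)$ by the amount
\[
\tfrac{1}{2}\bigl(1-([\xi_{i'}+\eta_{j'}+1]-[\xi_{i'}+\eta_{j'}])\bigr)=\begin{cases}1 & \text{if }\xi_{i'}+\eta_{j'}=0,\\ 0 & \text{if }\xi_{i'}+\eta_{j'}=1,\end{cases}
\]
in the argument, since $[\varepsilon+1]=1-[\varepsilon]$. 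The functional equation $\Gamma(z+1)=z\Gamma(z)$ then shows that the ratio of old over new is either $1$ or $2/(\lambda_{i'}-\nu_{j'}+\tfrac12)$. Exactly the same discussion applied to a factor of the second type $\Gamma\!\big(\tfrac{\nu_{j'}-\lambda_{i'}+\frac32-[\xi_{i'}+\eta_{j'}]}{2}\big)$ gives either $1$ or $(\nu_{j'}-\lambda_{i'}-\tfrac12)/2$.

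Applied with $i'=i+1$, these ratios exactly account for those $j$ with $\xi_{i+1}+\eta_j=1$ among $j\leq n-i$ and those $j$ with $\xi_{i+1}+\eta_j=0$ among $j\geq n+1-i$. Multiplying by the Bernstein--Sato polynomial $(-1)^{i+1}\prod_{j=1}^{n-i}(\lambda_{i+1}-\nu_j+\tfrac12)$, the linear factors with $\xi_{i+1}+\eta_j=1$ and $j\leq n-i$ cancel, while the factors $(\nu_j-\lambda_{i+1}-\tfrac12)=-(\lambda_{i+1}-\nu_j+\tfrac12)$ with $\xi_{i+1}+\eta_j=0$ and $j\geq n+1-i$ are produced. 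The net result is
\[
\alpha_i(\xi,\lambda,\eta,\nu)\,\prod_{\substack{j=1\\ \xi_{i+1}+\eta_j=0}}^n(\lambda_{i+1}-\nu_j+\tfrac12),
\]
with $\alpha_i$ a product of signs and powers of $2$ determined by $(\xi,\eta)$ alone; in particular $\alpha_i$ is entire and nowhere vanishing.

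The cases of $\calP_i$ and $\calL_i$ proceed by the identical mechanism, with the only additional ingredient being that one also performs the analogous shift in the $(\eta,\nu)$-direction (at index $n+1-i$ or $n-i$, respectively). Applying the above local ratio computation simultaneously at the position shifted in $(\xi,\lambda)$ and the position shifted in $(\eta,\nu)$, and pairing up the resulting cancellations with the two products appearing in $p_{\calP_i}$ and $p_{\calL_i}$ in Theorem~\ref{Bernstein Sato}, yields precisely the expressions claimed, with nowhere vanishing prefactors $\beta_i$ and $\gamma_i$. The only work is careful index bookkeeping; no new analytic input is required.
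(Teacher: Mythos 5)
Your proposal is correct and takes exactly the paper's approach: the paper's own proof simply observes that $\mathbf{K}$ differs from $\mathbb{K}$ by the $g$-independent factor \eqref{faktor} (times nowhere-vanishing powers of $\pi$ and $2$) and "leaves to the reader" precisely the ratio computation that you carry out. One small slip: in your displayed case analysis the two cases are swapped. Since $[\varepsilon+1]-[\varepsilon]=1-2[\varepsilon]$, the change in the argument is $\tfrac12\bigl(1-(1-2[\xi_{i'}+\eta_{j'}])\bigr)=[\xi_{i'}+\eta_{j'}]$, i.e.\ it equals $0$ when $\xi_{i'}+\eta_{j'}=0$ and $1$ when $\xi_{i'}+\eta_{j'}=1$ --- the opposite of what your display states. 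Your subsequent sentences, however, use the correct assignment (the ratio $2/(\lambda_{i'}-\nu_{j'}+\tfrac12)$ occurs for $\xi_{i+1}+\eta_j=1$, so those factors cancel against the Bernstein--Sato polynomial, while the factors with $\xi_{i+1}+\eta_j=0$ and $j\geq n+1-i$ are produced), so the conclusion and the final formulas are right; only the displayed evaluation needs to be flipped.
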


\begin{proof}
    The kernel $\mathbb{K}_{\xi,\lambda}^{\eta,\nu}$ satisfies the Bernstein--Sato identities \eqref{eq:BSidentitiesForBBK}. Since $\mathbf{K}_{\xi,\lambda}^{\eta,\nu}$ is the product of $\mathbb{K}_{\xi,\lambda}^{\eta,\nu}$ with \eqref{faktor} and a holomorphic and nowhere vanishing function in $(\lambda,\nu)$ (powers of $\pi$ and $2$), it suffices to compute ratios of \eqref{faktor} for parameters $(\xi,\lambda,\eta,\nu)$ that are related by the Bernstein--Sato identities. This is an easy though longish computation which we leave to the reader.
\end{proof}

\begin{remark}\label{normaliseret funktionalligning}
	With the new normalization, the functional identities in Theorem \ref{main theorem} can be written as
	\[
	\mathbf{T}_{\xi,-\lambda}^{w_i}\mathbf{K}_{\xi,\lambda}^{\eta,\nu}=\frac{\sqrt{\pi}(-1)^{(\xi_i+\xi_{i+1})(\eta_{n+1-i}+1)+\xi_i\xi_{i+1}}}{L(1,\chi_i\chi_{i+1}^{-1})}\mathbf{K}_{w_i(\xi,\lambda)}^{\eta,\nu}
	\]
	and
	\[
	\mathbf{S}_{\eta,\nu}^{w_i}\mathbf{K}_{\xi,\lambda}^{\eta,\nu}=\frac{\sqrt{\pi}(-1)^{(\eta_i+\eta_{i+1})(\xi_{n+1-i}+1)+\eta_i\eta_{i+1}}}{L(1,\psi_i^{-1}\psi_{i+1})}\mathbf{K}_{\xi,\lambda}^{w_i(\eta,\nu)}.
	\]
\end{remark}

\subsection{Optimal normalization}

We now show that the normalization we have used for $\fedK_{\xi,\lambda}^{\eta,\nu}$ is optimal in the sense that its zeros are of codimension two in $(\lambda,\nu)\in \CC^{n+1}\times \CC^n$. It is shown in the next section that there are indeed hyperplanes of codimension two on which the kernel vanishes.

\begin{theorem}\label{optimal normalization}
    For all $\xi$ and $\eta$, the set of $(\lambda,\nu)\in\CC^{n+1}\times\CC^n$ such that $\mathbf{K}_{\xi,\lambda}^{\eta,\nu}=0$ is of codimension $\geq2$ in $\CC^{n+1}\times\CC^n$.
\end{theorem}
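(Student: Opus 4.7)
We argue by contradiction: suppose that for some fixed $(\xi_0,\eta_0)$ the family $\mathbf{K}^{\eta_0,\nu}_{\xi_0,\lambda}$ vanishes identically on an irreducible hypersurface $V\subset\CC^{n+1}\times\CC^n$. The plan is to transport this vanishing, via the functional identities, to a hyperplane meeting the region of local integrability, where $\mathbf{K}$ is visibly non-zero.

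\textbf{Non-vanishing in the region of integrability.}
On the open set
\[
\Omega = \{(\lambda,\nu): \Re(\lambda_i-\nu_j+\tfrac12)>0\text{ for }i+j\leq n+1,\ \Re(\nu_j-\lambda_i+\tfrac12)>0\text{ for }i+j\geq n+2\},
\]
the kernel $K^{\eta,\nu}_{\xi,\lambda}$ is a non-zero locally integrable function, while the normalization factor $n(\xi,\lambda,\eta,\nu)$ of \eqref{eq:DefinitionRenormalizedKernel} is a product of Gamma factors, hence nowhere vanishing. Therefore $\mathbf{K}^{\eta,\nu}_{\xi,\lambda}\neq 0$ on $\Omega$ for every choice of $(\xi,\eta)$, and in particular $V\cap\Omega=\emptyset$.

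\textbf{Propagating the vanishing by Weyl reflections.}
I would use the normalized functional identities of Remark~\ref{normaliseret funktionalligning} together with their reversed forms from Remark~\ref{alternativ beskrivelse}, as well as the $H$-side analogues built from $\mathbf{S}^{w_j}_{\eta,\nu}$:
\[
\mathbf{T}^{w_i}_{\xi,-\lambda}\mathbf{K}^{\eta,\nu}_{\xi,\lambda}=\frac{C_i}{L(1,\chi_i\chi_{i+1}^{-1})}\mathbf{K}^{\eta,\nu}_{w_i(\xi,\lambda)},\qquad \mathbf{K}^{\eta,\nu}_{\xi,\lambda}=\frac{L(1,\chi_{i+1}\chi_i^{-1})}{C_i'}\mathbf{T}^{w_i}_{w_i\xi,-w_i\lambda}\mathbf{K}^{\eta,\nu}_{w_i(\xi,\lambda)}.
\]
At every point of $V$ at least one of these two coefficients is finite and non-zero: the first fails only on the reducibility hyperplane $\{\lambda_i-\lambda_{i+1}=-1-[\xi_i+\xi_{i+1}]-2k\}$ for some $k\geq 0$, and at such points the second coefficient evaluates to $\Gamma(1+[\xi_i+\xi_{i+1}]+k)$ times a non-zero power of $\pi$, a Gamma value at a positive integer. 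Combined with the pointwise invertibility of the normalized Knapp--Stein operator (which follows from the doubling relation $\mathbf{T}^{w_i}_{w_i(\xi,\lambda)}\circ \mathbf{T}^{w_i}_{\xi,\lambda}=\pm\id$, the normalized version of the classical $T^{w_i}\circ T^{w_i}=\gamma(\lambda)\cdot\id$ with $\gamma$ absorbed into the $L(0,\cdot)$ normalization), the assumed vanishing propagates: if $\mathbf{K}^{\eta,\nu}_{\xi,\lambda}$ vanishes on $V$, then $\mathbf{K}^{\eta,\nu}_{w_i\xi,\lambda'}$ vanishes on the reflected hypersurface $w_iV$. The $H$-side argument gives the analogous statement with $w_j\in W_H$.

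\textbf{Reaching $\Omega$ and concluding.}
It remains to verify that for every hyperplane $V$ some Weyl translate meets $\Omega$. The interlacing description of $\Omega$ (equivalent to $\Re(\lambda_1)\geq\Re(\nu_n)-\tfrac12\geq\Re(\lambda_2)-1\geq\cdots$) implies that a hyperplane $V=\{\ell=0\}$ is disjoint from $\Omega$ only when $\ell$ is (up to scalar) one of the following boundary forms: $\lambda_i-\nu_j-c$ with $i+j\leq n+1$ and $\Re c\leq-\tfrac12$; $\nu_j-\lambda_i-c$ with $i+j\geq n+2$ and $\Re c\leq-\tfrac12$; or $\lambda_i-\lambda_j-c$ (resp.\ $\nu_i-\nu_j-c$) with $\Re c$ below the telescoped interlacing slack $-(j-i)$. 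In each case a short composition of simple reflections from $W_G\times W_H$ either moves the index pair $(i,j)$ across the threshold $i+j=n+1$ or flips the sign of $c$ between the two $\lambda$'s or two $\nu$'s, producing a transformed hyperplane that meets $\Omega$. Combining with the previous step yields a point of $\Omega$ at which $\mathbf{K}^{\eta',\nu}_{\xi',\lambda}=0$ for some $(\xi',\eta')$ in the Weyl orbit of $(\xi_0,\eta_0)$, contradicting the non-vanishing established in the first step.

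\textbf{Main obstacle.}
The subtlest point is the propagation of vanishing across the reducibility hyperplanes: on such hyperplanes the direct functional identity's coefficient has a pole, so one must use the reversed identity, and then extracting the vanishing of $\mathbf{K}^{\eta,\nu}_{w_i(\xi,\lambda)}$ from that of $\mathbf{T}^{w_i}\mathbf{K}^{\eta,\nu}_{w_i(\xi,\lambda)}$ genuinely requires pointwise invertibility of the normalized Knapp--Stein operator, not just its nowhere-vanishing as an operator. The doubling relation $\mathbf{T}^{w_i}\circ \mathbf{T}^{w_i}=\pm\id$ supplies this invertibility at every parameter. A secondary but purely combinatorial task is the classification of hyperplanes disjoint from $\Omega$ and the verification that each is moved into $\Omega$ by a specific Weyl-group reflection; this is elementary but requires careful bookkeeping with the interlacing inequalities and, for the $\lambda_i-\lambda_j$ / $\nu_i-\nu_j$ types, a composition of several simple reflections.
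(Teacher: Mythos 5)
There is a genuine gap in your propagation step, and it is located exactly where you flag the ``main obstacle''. Your fix rests on the claim that the normalized Knapp--Stein operator is pointwise invertible because of a doubling relation $\mathbf{T}^{w_i}\circ\mathbf{T}^{w_i}=\pm\id$. This is false. Dividing $T^{w_i}_{\xi,\lambda}$ by $L(0,\chi_i\chi_{i+1}^{-1})$ makes the family holomorphic and nowhere vanishing \emph{as a family of operators}, but it does not make each operator invertible: the composition $\mathbf{T}^{w_i}_{w_i(\xi,\lambda)}\circ\mathbf{T}^{w_i}_{\xi,\lambda}$ is a holomorphic scalar that vanishes precisely at the reducibility points, where the (normalized) operator has image in a proper subrepresentation or kills one. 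These are exactly the parameters at which your direct functional identity degenerates (its coefficient vanishes iff $L(1,\chi_i\chi_{i+1}^{-1})$ has a pole, i.e.\ on a reducibility hyperplane), so precisely there you cannot deduce $\mathbf{K}^{\eta,\nu}_{w_i(\xi,\lambda)}=0$ from $\mathbf{T}^{w_i}\mathbf{K}^{\eta,\nu}_{w_i(\xi,\lambda)}=0$. Since your classification of hypersurfaces disjoint from $\Omega$ explicitly includes hyperplanes of the form $\lambda_i-\lambda_j=c$ and $\nu_i-\nu_j=c$, the propagation across such $V$ does not go through, and your concluding contradiction is not reached for these cases.

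The root cause is that your first step is weaker than it needs to be. The paper instead observes that for \emph{every} $(\lambda,\nu)$ the restriction of the unnormalized kernel to the open dense set $\{\Phi_1,\ldots,\Phi_{n+1},\Psi_1,\ldots,\Psi_n\neq0\}$ is a nowhere vanishing smooth function, so $\mathbf{K}_{\xi,\lambda}^{\eta,\nu}$ can vanish only where the normalization \eqref{eq:DefinitionRenormalizedKernel} has a pole, i.e.\ only on the hyperplanes $\lambda_i-\nu_j+\tfrac12+[\xi_i+\eta_j]\in-2\NN_0$ ($i+j\leq n+1$) and $\nu_j-\lambda_i+\tfrac12+[\xi_i+\eta_j]\in-2\NN_0$ ($i+j\geq n+2$). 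This a priori restriction eliminates the reducibility hyperplanes from consideration altogether; on the remaining candidates the coefficients of the functional identities in Remark~\ref{normaliseret funktionalligning} are generically nonzero along the hyperplane, so only the direct identities plus analytic continuation along the hyperplane are needed --- no invertibility of $\mathbf{T}^{w_i}$ or $\mathbf{S}^{w_i}$ enters. Iterating over both Weyl groups then moves a candidate hyperplane $\lambda_i-\nu_j+\cdots\in-2\NN_0$ with $i+j\leq n+1$ to one with indices satisfying $i+j\geq n+2$, which is not in the candidate list, giving the contradiction. If you replace your $\Omega$-based first step with this stronger observation, the rest of your argument reduces to the paper's proof and the problematic invertibility claim becomes unnecessary.
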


\begin{proof}
We first make a general observation. Since the restriction of the unnormalized kernel $K_{\xi,\lambda}^{\eta,\nu}$ to the open dense subset $\{\Phi_1,\ldots,\Phi_{n+1},\Psi_1,\ldots,\Psi_n\neq0\}$ is a nowhere vanishing smooth function, the normalized kernel $\mathbf{K}_{\xi,\lambda}^{\eta,\nu}$ can only be zero if the normalization has a pole. Thus, looking at the gamma factors in the normalization \eqref{eq:DefinitionRenormalizedKernel}, either $\lambda_i-\nu_j+[\xi_i+\eta_j]+\frac{1}{2}\in -2\NN_0$ (for some $i,j$ with $i+j\leq n+1$) or $\nu_j-\lambda_i+[\xi_i+\eta_j]+\frac{1}{2}\in -2\NN_0$ (for some $i,j$ with $i+j\geq n+2$) must be satisfied in order for $\fedK_{\xi,\lambda}^{\eta,\nu}$ to vanish. Further note that if $\mathbf{K}_{\xi,\lambda}^{\eta,\nu}=0$ for $(\lambda,\nu)$ in an open set of a hyperplane of the above form, then it is zero for all $(\lambda,\nu)$ in this hyperplane by analytic continuation.\\
So let us assume that $\mathbf{K}_{\xi,\lambda}^{\eta,\nu}=0$ for all parameters on the hyperplane $\lambda_i-\nu_j+[\xi_i+\eta_j]+\frac{1}{2}=-2m$ for some $i,j$ with $i+j\leq n+1$ and some $m\in\NN_0$. Since the coefficients on the right hand side of the functional identities in Remark~\ref{normaliseret funktionalligning} are generically non-zero on this hyperplane, we can conclude that both $\mathbf{K}_{w_i(\xi,\lambda)}^{\eta,\nu}=0$ and $\mathbf{K}_{\xi,\lambda}^{w_i(\eta,\nu)}=0$ on the same hyperplane. Iterating this argument shows that $\mathbf{K}_{\xi,\lambda}^{\eta,\nu}=0$ on all hyperplanes of the form $\lambda_i-\nu_j+[\xi_i+\eta_j]+\frac{1}{2}=-2m$ with $i=1,\ldots,n+1$ and $j=1,\ldots,n$. But for $i,j$ with $i+j\geq n+2$, this hyperplane is not among the possible zeros of $\mathbf{K}_{\xi,\lambda}^{\eta,\nu}$ identified above. This shows that $\mathbf{K}_{\xi,\lambda}^{\eta,\nu}$ cannot vanish on any of the hyperplanes $\lambda_i-\nu_j+[\xi_i+\eta_j]+\frac{1}{2}=-2m$.\\
The same argument applies to the hyperplanes $\nu_j-\lambda_i+[\xi_i+\eta_j]+\frac{1}{2}=-2m$ and the proof is complete.
\end{proof}

\section{On zeros and residues}
\noindent In this section we prove some results about the vanishing and support of the distributions $\mathbf{K}_{\xi,\lambda}^{\eta,\nu}\in\calD'(G)$. Note that the support of $\mathbf{K}_{\xi,\lambda}^{\eta,\nu}$ is equal to $G$ unless $(\lambda,\nu)$ is a pole of one of the gamma factors with which we have normalized. Therefore, the value of $\mathbf{K}_{\xi,\lambda}^{\eta,\nu}$ at such a point can be viewed as a residue of the unnormalized family $K_{\xi,\lambda}^{\eta,\nu}$.

\subsection{The support at singular parameters}
Since $\mathbf{K}_{\xi,\lambda}^{\eta,\nu}\in\calD'(G)$ is equivariant under the right action of $P_G$ and under the left action of $P_H$, its support is a union of double cosets in $P_H\backslash G/P_G$. Since $(G,H)$ is a strongly spherical pair (see e.g. \cite{F23}), the latter set of double cosets is finite. While it seems to be a difficult task to determine the support of the distribution $\mathbf{K}_{\xi,\lambda}^{\eta,\nu}$ precisely, some soft arguments only involving the fact that our normalization is holomorphic yield an upper bound in terms of the zero sets of $\Phi_i$ and $\Psi_j$. Note that since both $\Phi_i$ and $\Psi_j$ are equivariant under the actions of $P_G$ and $P_H$, their zero sets are unions of double cosets in $P_H\backslash G/P_G$.

\begin{remark}\label{rem:RenormalizedKernelMultipliedWithPhiPsi}
    We first translate the identities $\Phi_kK_{\delta,s}^{\varepsilon,t}=K_{\delta+e_k,s+e_k}^{\varepsilon,t}$ and $\Psi_\ell K_{\delta,s}^{\varepsilon,t}=K_{\delta,s}^{\varepsilon+e_\ell,t+e_\ell}$ to the normalized kernels:
    \begin{align*}
        \Phi_k\mathbf{K}_{\delta,s}^{\varepsilon,t} &= \sigma_k(\xi,\lambda,\eta,\nu)\prod_{\substack{i\leq k\leq n+1-j\\\xi_i+\eta_j=0}}(\lambda_i-\nu_j+\tfrac{1}{2})\prod_{\substack{n+2-j\leq k\leq i-1\\\xi_i+\eta_j=0}}(\nu_j-\lambda_i+\tfrac{1}{2})\cdot \mathbf{K}_{\delta+e_k,s+e_k}^{\varepsilon,t},\\
        \Psi_\ell\mathbf{K}_{\delta,s}^{\varepsilon,t} &= \kappa_\ell(\xi,\lambda,\eta,\nu)\prod_{\substack{i\leq\ell\leq n-j\\\xi_i+\eta_j=0}}(\lambda_i-\nu_j+\tfrac{1}{2})\prod_{\substack{n+1-j\leq\ell\leq i-1\\\xi_i+\eta_j=0}}(\nu_j-\lambda_i+\tfrac{1}{2})\cdot \mathbf{K}_{\delta,s}^{\varepsilon+e_\ell,t+e_\ell},
    \end{align*}
    where $\sigma_k$ and $\kappa_\ell$ are nowhere vanishing holomorphic functions of $(\lambda,\nu)\in\CC^{n+1}\times\CC^n$.
\end{remark}

\begin{proposition}\label{prop:SupportOfK}
If $1\leq i\leq n+1-j\leq n$ and $\lambda_i-\nu_j+\frac{1}{2}+[\xi_i+\eta_j]\in-2\NN_0$, then the support of $\mathbf{K}_{\xi,\lambda}^{\eta,\nu}$ is contained in 
\[
\{\Phi_i=0\}\cap \{\Psi_i=0\}\cap\{\Phi_{i+1}=0\}\cap\dots\cap\{\Psi_{n-j}=0\}\cap\{\Phi_{n+1-j}=0\}.
\]
If $1\leq n+1-j\leq i-1\leq n$ and $\nu_j-\lambda_i+\frac{1}{2}+[\eta_j+\xi_i]\in-2\NN_0$, then the support of $\mathbf{K}_{\xi,\lambda}^{\eta,\nu}$ is contained in 
\[
\{\Psi_{n+1-j}=0\}\cap\{\Phi_{n+2-j}=0\}\cap\dots\cap\{\Phi_{i-1}=0\}\cap\{\Psi_{i-1}=0\}.
\] 
\end{proposition}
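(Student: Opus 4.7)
The goal is to show $\Phi_k^{N_k}\mathbf{K}_{\xi,\lambda}^{\eta,\nu}=0$ and $\Psi_\ell^{N_\ell}\mathbf{K}_{\xi,\lambda}^{\eta,\nu}=0$ for suitable integers $N_k,N_\ell$ and for every $k\in\{i,\ldots,n+1-j\}$ and $\ell\in\{i,\ldots,n-j\}$; the support inclusion then follows from the fact that $\Phi_k^{N_k}$ is an invertible smooth factor on $\{\Phi_k\neq0\}$. The main tool is Remark~\ref{rem:RenormalizedKernelMultipliedWithPhiPsi}, which I iterate. Throughout, write $\lambda_i-\nu_j+\tfrac12+[\xi_i+\eta_j]=-2m$, $m\in\NN_0$.

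\textbf{Base case $m=0$ in Case A ($\xi_i+\eta_j\equiv 0$).} Then $\lambda_i-\nu_j+\tfrac12=0$, and by the indexing rule $a\leq k\leq n+1-b$ with $(a,b)=(i,j)$ this factor appears in $P_k$ for every $k\in[i,n+1-j]$ and in $Q_\ell$ for every $\ell\in[i,n-j]$. Remark~\ref{rem:RenormalizedKernelMultipliedWithPhiPsi} then gives $\Phi_k\mathbf{K}=0$ and $\Psi_\ell\mathbf{K}=0$ directly, yielding the full intersection.

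\textbf{Reducing general $(m,\xi_i+\eta_j)$ to the base case.} Iterating the identity $\Phi_i\mathbf{K}_{\xi,\lambda}^{\eta,\nu}=\sigma_iP_i\mathbf{K}_{\xi+e_i,\lambda+e_i}^{\eta,\nu}$ gives
\[
\Phi_i^N\mathbf{K}_{\xi,\lambda}^{\eta,\nu}=\Big(\prod_{r=0}^{N-1}(\sigma_iP_i)\big|_{(\xi+re_i,\lambda+re_i)}\Big)\mathbf{K}_{\xi+Ne_i,\lambda+Ne_i}^{\eta,\nu}.
\]
At step $r$, the factor $(\lambda_i+r-\nu_j+\tfrac12)$ lies in $P_i^{(r)}$ iff $r\equiv[\xi_i+\eta_j]\pmod2$, and it vanishes iff $r=2m+[\xi_i+\eta_j]$. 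Taking $N_0=2m+1+[\xi_i+\eta_j]$ makes the product vanish, so $\Phi_i^{N_0}\mathbf{K}_{\xi,\lambda}^{\eta,\nu}=0$, forcing $\text{supp}(\mathbf{K})\subset\{\Phi_i=0\}$. The same calculation with $\Psi_\ell$ for $\ell\in[i,n-j]$ applies: such $\Psi_\ell$ shifts only $\lambda_i,\xi_i$ among the hypothesis-relevant variables (since $\ell<n+1-j$ keeps $\nu_j,\eta_j$ fixed), so $\Psi_\ell^{N_0}\mathbf{K}=0$ and hence $\text{supp}(\mathbf{K})\subset\{\Psi_\ell=0\}$.

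\textbf{Propagation to $\{\Phi_k=0\}$ for $k\in[i+1,n+1-j]$.} For $N=2m+[\xi_i+\eta_j]$, the identity $\Phi_i^N\mathbf{K}_{\xi,\lambda}^{\eta,\nu}=c\,\mathbf{K}_{\xi,\lambda+Ne_i}^{\eta,\nu}$ holds with $c\neq0$ (none of the intermediate factors vanish for $r<N$), and the shifted parameter lies in the base case. By the base case applied to the shifted kernel, $\Phi_k\mathbf{K}_{\xi,\lambda+Ne_i}^{\eta,\nu}=0$, hence $\Phi_k\Phi_i^N\mathbf{K}=0$. This gives $\mathbf{K}=0$ on the open set $\{\Phi_k\neq0\}\cap\{\Phi_i\neq0\}$, consistent with $\text{supp}(\mathbf{K})\subset\{\Phi_i=0\}$.

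To push the vanishing onto the remaining locus $\{\Phi_k\neq0\}\cap\{\Phi_i=0\}$, use Lemma~\ref{algebra} inductively: the relation $\Phi_i\Psi_i(gw_i)-\Phi_i(gw_i)\Psi_i=\Psi_{i-1}\Phi_{i+1}$ (and its analogues for larger indices) together with the already-established containments $\text{supp}(\mathbf{K})\subset\{\Phi_i=0\}\cap\bigcap_{\ell\in[i,n-j]}\{\Psi_\ell=0\}$ forces $\Phi_{i+1}$ to vanish on $\text{supp}(\mathbf{K})$ off the exceptional locus $\{\Psi_{i-1}=0\}$, and iterating Lemma~\ref{algebra} with $i$ replaced by $i+1,i+2,\ldots$ covers $\Phi_{i+2},\ldots,\Phi_{n+1-j}$.

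\textbf{Main obstacle.} The cleanest part is steps 1 and 2; the technical heart is the propagation in step 3. Specifically, ruling out that $\text{supp}(\mathbf{K})$ meets the exceptional loci of Lemma~\ref{algebra} (such as $\{\Psi_{i-1}=0\}$, which is \emph{not} itself one of the conditions we have directly established) requires a finite analysis of the $P_H$-$P_G$-double cosets lying in $\{\Phi_i=0\}\cap\bigcap_\ell\{\Psi_\ell=0\}$, showing that each such double coset either already satisfies $\Phi_k=0$ or can be excluded by combining the algebraic identities with the multiplication-shift argument applied to additional indices. The second assertion of the proposition (with the roles of $i+j\leq n+1$ and $i+j>n+1$ reversed) is then proved by the symmetric argument using the second identity in Lemma~\ref{algebra}.
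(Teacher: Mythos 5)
Your base case and your treatment of $\Phi_i$ and of $\Psi_\ell$ for $\ell\in[i,n-j]$ are correct and match the paper's argument (the paper runs the same computation as an induction on $m$ rather than as an explicit product over $r$, but the content is identical). The genuine gap is in your third step. You only establish $\Phi_k^{N}\mathbf{K}_{\xi,\lambda}^{\eta,\nu}=0$ by direct iteration for $k=i$, and then try to reach the remaining minors $\Phi_{i+1},\dots,\Phi_{n+1-j}$ by first shifting into the base case and afterwards propagating via Lemma~\ref{algebra} and a double-coset analysis that you do not carry out; as you admit, this leaves the locus $\{\Phi_i=0\}\cap\{\Phi_k\neq0\}$ uncontrolled, so the containment in $\{\Phi_k=0\}$ for $k>i$ is not proved. (Within that step, the claim that the constant $c$ is nonzero is also unjustified: the product $P_i^{(r)}$ contains linear factors $(\lambda_a-\nu_b+\tfrac12)$ for \emph{all} pairs $(a,b)$ with $a\leq i\leq n+1-b$, and any of these may vanish for $r<N$ at the given parameters.)

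The detour is unnecessary, and the fix is exactly the observation you already made for $\Psi_\ell$: for \emph{every} $k$ with $i\leq k\leq n+1-j$, multiplication by $\Phi_k$ shifts $\lambda_i\mapsto\lambda_i+1$ and $\xi_i\mapsto\xi_i+1$ while fixing $(\nu_j,\eta_j)$ (since $i\leq k$ and $j\leq n+1-k$), and the indexing rule $a\leq k\leq n+1-b$ in Remark~\ref{rem:RenormalizedKernelMultipliedWithPhiPsi} guarantees that the factor $(\lambda_i-\nu_j+\tfrac12)$ occurs in the product for $\Phi_k$ whenever $\xi_i+\eta_j=0$. Hence the iteration you performed for $\Phi_i$ applies verbatim to each such $\Phi_k$ and yields $\Phi_k^{N_0}\mathbf{K}_{\xi,\lambda}^{\eta,\nu}=0$ with $N_0=2m+1+[\xi_i+\eta_j]$; no appeal to Lemma~\ref{algebra} or to the double-coset structure is needed. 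A minor further point: the shifted kernel is $\mathbf{K}_{\delta+Ne_k,s+Ne_k}^{\varepsilon,t}$ in \emph{spectral} coordinates, which in principal-series coordinates increments all of $\lambda_1,\dots,\lambda_k$ and $\nu_{n+2-k},\dots,\nu_n$, not just $\lambda_i$; your notation $\mathbf{K}_{\xi+Ne_i,\lambda+Ne_i}^{\eta,\nu}$ is therefore incorrect, although the quantities relevant to the argument, namely $\lambda_i-\nu_j$ and $\xi_i+\eta_j$, do transform as you claim.
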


\begin{proof}
We only prove the first statement, the second one is shown analogously. Assume that $1\leq i\leq n+1-j\leq n$ and $\lambda_i-\nu_j+\frac{1}{2}=-m$ with $m\in\NN_0$, $m\equiv\xi_i+\eta_j\mod2$. We show by induction on $m$ that $\Phi_k^{m+1}\mathbf{K}_{\xi,\lambda}^{\eta,\nu}=0$ for any $k=i,\ldots,n+1-j$, the corresponding statement for $\Psi_\ell^m\mathbf{K}_{\xi,\lambda}^{\eta,\nu}$ is treated similarly. For $m=0$ this follows directly from the first identity in Remark~\ref{rem:RenormalizedKernelMultipliedWithPhiPsi} since the linear factor $\lambda_i-\nu_j+\frac{1}{2}$ appears on the right hand side. Now assume $m>0$, then $\Phi_k\mathbf{K}_{\xi,\lambda}^{\eta,\nu}$ is a multiple of $\mathbf{K}_{\xi',\lambda'}^{\eta',\nu'}$, where in particular $\xi'_i=\xi_i+1$, $\lambda'_i=\lambda_i+1$ and $\eta_j'=\eta_j$, $\nu_j'=\nu_j$ by Remark~\ref{rem:RenormalizedKernelMultipliedWithPhiPsi}. Since $\lambda_i'-\nu_j'+\frac{1}{2}=-(m-1)$ and $m-1\equiv\xi_i'+\eta_j'\mod2$, the induction assumption implies that $\Phi_k^m\mathbf{K}_{\xi',\lambda'}^{\eta',\nu'}=0$, hence $\Phi_k^{m+1}\mathbf{K}_{\xi,\lambda}^{\eta,\nu}=0$ as claimed.
\end{proof}
\subsection{Some zeros}

The following result identifies a locally finite union of affine subspaces of codimension two in the space of parameters $(\lambda,\nu)\in\CC^{n+1}\times\CC^n$ for which the holomorphic family of distributions $\mathbf{K}_{\xi,\lambda}^{\eta,\nu}$ vanishes. 

\begin{theorem}\label{Alle nuller}
The distribution $\mathbf{K}_{\xi,\lambda}^{\eta,\nu}$ vanishes for all parameters $(\xi,\lambda,\eta,\nu)$ in the sets
\[
\mathcal{N}_{i,j,k}=\{\lambda_i-\nu_k+\tfrac{1}{2}+[\xi_i+\eta_k]\in -2\NN_0\}\cap \{\nu_k-\lambda_j+\tfrac{1}{2}+[\eta_k+\xi_j]\in-2\NN_0\},
\]
where $i<j$ and $k\in\{1,\dots,n\}$ and 
\[
\mathcal{M}_{i,j,k}=\{\nu_j-\lambda_k+\tfrac{1}{2}+[\eta_j+\xi_k]\in-2\NN_0\}\cap \{\lambda_k-\nu_i+\tfrac{1}{2}+[\xi_k+\eta_i]\in -2\NN_0\},
\]
where $i<j$ and $k\in \{1,\dots,n+1\}$.
\end{theorem}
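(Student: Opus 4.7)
The strategy is to combine the support restrictions of Proposition~\ref{prop:SupportOfK} with the functional identities from Remark~\ref{normaliseret funktionalligning}. I will describe the argument for $\mathcal{N}_{i,j,k}$; the analysis of $\mathcal{M}_{i,j,k}$ is parallel with the roles of $G$ and $H$ exchanged (using the $H$-Knapp--Stein intertwiner $\mathbf{S}^{w}$ in place of $\mathbf{T}^{w}$ and the second alternative of Proposition~\ref{prop:SupportOfK}).

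The first step is the following arithmetic observation. If both defining conditions of $\mathcal{N}_{i,j,k}$ hold, then summing them and using the facts
\[
[\xi_i+\eta_k]+[\eta_k+\xi_j]\equiv[\xi_i+\xi_j]\pmod 2 \quad\text{and}\quad [\xi_i+\eta_k]+[\eta_k+\xi_j]\geq[\xi_i+\xi_j],
\]
one obtains $\lambda_i-\lambda_j+1+[\xi_i+\xi_j]\in-2\NN_0$. This is precisely the reducibility condition making $L(1,\chi_i\chi_j^{-1})^{-1}$ vanish. Iterating the normalized functional identity of Remark~\ref{normaliseret funktionalligning} along a reduced expression of the transposition $\tau=(i,j)\in W_G$ gives
\[
\mathbf{T}^{\tau}_{\xi,-\lambda}\mathbf{K}_{\xi,\lambda}^{\eta,\nu}=C_\tau(\xi,\lambda,\eta,\nu)\,\mathbf{K}_{\tau(\xi,\lambda)}^{\eta,\nu},
\]
where $C_\tau$ is, up to non-vanishing signs and powers of $\sqrt{\pi}$, a product of reciprocals $L(1,\chi_a\chi_b^{-1})^{-1}$ over the inversions $(a,b)$ of $\tau$ (as one can verify by matching each simple-reflection factor to the corresponding positive root turned negative by $\tau$). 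Since $(i,j)$ is itself an inversion of $\tau$, the factor $L(1,\chi_i\chi_j^{-1})^{-1}$ appears; by the arithmetic observation, this factor vanishes on $\mathcal{N}_{i,j,k}$, so $\mathbf{T}^{\tau}_{\xi,-\lambda}\mathbf{K}_{\xi,\lambda}^{\eta,\nu}=0$ on $\mathcal{N}_{i,j,k}$.

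The second step converts the vanishing of $\mathbf{T}^\tau\mathbf{K}_{\xi,\lambda}^{\eta,\nu}$ into the vanishing of $\mathbf{K}_{\xi,\lambda}^{\eta,\nu}$ itself. In the principal case $i\leq n+1-k<j$, both defining hyperplanes correspond to $L$-factor poles in the normalization of $K$ to $\mathbf{K}$, and Proposition~\ref{prop:SupportOfK} applies to each (Case~A for condition~1, Case~B for condition~2). Combining the two support restrictions forces
\[
\mathrm{supp}(\mathbf{K}_{\xi,\lambda}^{\eta,\nu})\subseteq\bigcap_{\ell=i}^{j-1}\big(\{\Phi_\ell=0\}\cap\{\Psi_\ell=0\}\big).
\]
Since $(G,H)$ is a strongly spherical pair, the subvariety on the right decomposes into finitely many $(P_H,P_G)$-double cosets, and a combination of the minor identities of Lemma~\ref{algebra} with the constraint $\mathbf{K}_{\xi,\lambda}^{\eta,\nu}\in\ker\mathbf{T}^\tau_{\xi,-\lambda}$ rules out each such coset as a possible support, forcing $\mathbf{K}_{\xi,\lambda}^{\eta,\nu}=0$. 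The cases where one of the two hyperplanes does not correspond to an $L$-factor pole in the normalization are reduced to the principal case by shifting $(\xi,\lambda,\eta,\nu)$ using the Bernstein--Sato identities of Corollary~\ref{cor:BSidentitiesForBFK} together with the multiplication identities of Remark~\ref{rem:RenormalizedKernelMultipliedWithPhiPsi}, which propagate vanishing between kernels at parameters related by unit shifts in $\xi$ and $\lambda$.

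The main obstacle is the orbit-by-orbit verification in the second step. Although the support restriction from Proposition~\ref{prop:SupportOfK} together with $\mathbf{T}^\tau\mathbf{K}=0$ are strong constraints, converting them into an outright vanishing requires explicit representatives of the $(P_H,P_G)$-double cosets lying on the singular variety $\bigcap_\ell(\{\Phi_\ell=0\}\cap\{\Psi_\ell=0\})$ and a delicate parity check using the second defining condition of $\mathcal{N}_{i,j,k}$. A potential cleaner route, which I would pursue as a parallel approach, is to iterate Remark~\ref{rem:RenormalizedKernelMultipliedWithPhiPsi} on the hyperplane defined by condition~1 to show by induction in the pole order that $\mathbf{K}_{\xi,\lambda}^{\eta,\nu}$ must already be a finite-order derivative of a delta distribution supported on this singular variety, so that the second defining condition forces the leading-order coefficient to vanish identically, and hence $\mathbf{K}_{\xi,\lambda}^{\eta,\nu}=0$.
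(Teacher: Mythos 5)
Your Step~1 is arithmetically correct but produces a statement that is strictly weaker than what is needed, and Step~2, which is meant to close the gap, is not carried out. Let me explain.

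\textbf{What Step~1 actually establishes.} Your observation that both defining conditions of $\mathcal{N}_{i,j,k}$ imply $\lambda_i-\lambda_j+1+[\xi_i+\xi_j]\in-2\NN_0$, and hence $L(1,\chi_i\chi_j^{-1})^{-1}=0$, is correct. Iterating Remark~\ref{normaliseret funktionalligning} along a reduced word for $\tau=(i,j)$ does give $\mathbf{T}^\tau_{\xi,-\lambda}\mathbf{K}_{\xi,\lambda}^{\eta,\nu}=0$ on $\mathcal{N}_{i,j,k}$. However, notice that this vanishing depends only on the \emph{sum} of the two defining conditions, i.e.\ on a single linear relation: $\mathbf{T}^\tau\mathbf{K}$ vanishes on the whole codimension-one set $\{\lambda_i-\lambda_j+1+[\xi_i+\xi_j]\in-2\NN_0\}$, whereas Theorem~\ref{optimal normalization} guarantees $\mathbf{K}$ itself does \emph{not} vanish on any hyperplane. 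Thus the vanishing of $\mathbf{T}^\tau\mathbf{K}$ cannot, by itself, be the source of the vanishing of $\mathbf{K}$; you are simply landing in $\ker\mathbf{T}^\tau$. The full strength of the two conditions in $\mathcal{N}_{i,j,k}$ is being thrown away in Step~1.

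\textbf{Where the gap sits.} Step~2 is the step that would have to recover this lost information, but it is only sketched. You combine the support bound of Proposition~\ref{prop:SupportOfK} with $\mathbf{T}^\tau\mathbf{K}=0$ and claim an ``orbit-by-orbit'' verification forces $\mathbf{K}=0$, but you explicitly flag this as the main obstacle and give no actual argument (no explicit double-coset representatives, no parity check). Moreover, Proposition~\ref{prop:SupportOfK} only applies to hyperplanes that correspond to poles of the normalizing $L$-factors, i.e.\ to pairs $(i,j)$ with $i\le n+1-j$ resp.\ $j\ge n+2-i$; as you note, $\mathcal{N}_{i,j,k}$ contains ``non-principal'' cases where only one of the two hyperplanes is such a pole, and here the reduction to the principal case via the Bernstein--Sato identities is asserted but not demonstrated. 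The alternative ``cleaner route'' you mention at the end (induction on the pole order, a delta-distribution leading term) is close in spirit to what actually works, but again it is not written down.

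\textbf{How the paper resolves this.} The paper's proof is built the other way around: it \emph{begins} with a direct analytic computation (Lemma~\ref{first zero}), showing that at the spectral parameter $(\varepsilon_1,t_1)=(0,-1)$ the residue of $|\Psi_1|^{t_1}$ is literally the delta distribution $\delta(g_{n,1})$, so that $\mathbf{K}_{\delta,s}^{\varepsilon,t}$ becomes a continuous function times $\delta(g_{n,1})$ divided by the remaining normalization; then the extra condition $s_1+[\delta_1]+1\in-2\NN_0$ or $s_2+[\delta_2]+1\in-2\NN_0$ makes the normalization diverge, hence the quotient is identically zero. Lemma~\ref{first zeros non-spherical} extends this to $t_1+[\varepsilon_1]+1\in-2\NN_0$ using the shifted Bernstein--Sato identity. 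This gives the vanishing on $\mathcal{N}_{1,2,n}$ and $\mathcal{M}_{n-1,n,2}$ in principal-series coordinates. Only then are the functional identities used, and crucially in the \emph{other} direction from yours: given $\mathbf{A}_{\xi,\lambda}^{\eta,\nu}=0$, the identity $\mathbf{A}_{\xi,\lambda}^{\eta,\nu}\circ\mathbf{T}^{w_i}_{w_i(\xi,\lambda)}=c\,\Gamma(\cdot)^{-1}\mathbf{A}_{w_i(\xi,\lambda)}^{\eta,\nu}$ gives $\mathbf{A}_{w_i(\xi,\lambda)}^{\eta,\nu}=0$ on the dense subset where the gamma factor is finite, and holomorphy finishes. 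No kernel of $\mathbf{T}^\tau$ intervenes. In short, you are missing the base-case computation; without it, the functional-identity bookkeeping cannot get off the ground.
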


We prove this theorem in three steps.

\begin{lemma}\label{first zero}
For $(\varepsilon_1,t_1)=(0,-1)$ and $\Re(s_i)\geq 0$ ($i=1,\ldots,n$), $\Re(t_j)\geq 0$ ($j=2,\dots,n$) the kernel $\mathbf{K}_{\delta,s}^{\varepsilon,t}$ is the distribution
\[
    \mathbf{K}_{\delta,s}^{\varepsilon,t}(g)=(-1)^{\varepsilon_2}\frac{\delta(g_{n,1})|g_{n+1,1}|_{\delta_1+\delta_2}^{s_1+s_2}|g_{n,2}|_{\delta_2+\varepsilon_2}^{s_2+t_2}|g_{n-1,1}|_{\varepsilon_2}^{t_2}\prod_{i=3}^{n+1}|\Phi_i(g)|_{\delta_i}^{s_i}\prod_{j=3}^{n}|\Psi_j(g)|_{\varepsilon_j}^{t_j}}{n'(\delta,s,\varepsilon,t)},
\]
where $n'(\delta,s,\varepsilon,t)=(n(\delta,s,\varepsilon,t)/\Gamma(\frac{t_1+1}{2}))\vert_{t_1=-1}$. Furthermore, $\mathbf{K}_{\delta,s}^{\varepsilon,t}=0$ if additionally $s_1+[\delta_1]+1\in -2\NN_0$ or $s_2+[\delta_2]+1\in -2\NN_0$.
\end{lemma}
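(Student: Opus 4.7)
The plan is to identify $\mathbf{K}_{\delta,s}^{\varepsilon,t}$ at $(\varepsilon_1,t_1)=(0,-1)$ by matching a simple pole of the unnormalized kernel against a compensating pole in the normalization, and then to extract the vanishing claim from the residual $L$-factors that remain in $n'$.

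I would begin by factoring the unnormalized kernel as $K_{\delta,s}^{\varepsilon,t}(g) = |g_{n,1}|^{t_1}_{\varepsilon_1}R(g)$, using that $\Psi_1(g) = (w_0 g)_{2,1} = g_{n,1}$; in the assumed convergence region the factor $R$ is locally integrable. The one-variable distribution $|x|_0^{t_1}$ has a simple pole at $t_1=-1$ with residue $2\delta(x)$, and since $g\mapsto g_{n,1}$ is a submersion, pulling back yields $\bigl(|g_{n,1}|_0^{t_1}/\Gamma((t_1+1)/2)\bigr)\bigr|_{t_1=-1} = \delta(g_{n,1})$. The only factor of $n(\delta,s,\varepsilon,t)$ singular at $\{\varepsilon_1=0,\,t_1=-1\}$ is $L(\tfrac{1}{2},\chi_2^{-1}\psi_n) = \pi^{-(t_1+1)/2}\Gamma((t_1+1)/2)$, since the reduction $\nu_n - \lambda_2 + [\eta_n+\xi_2] = t_1+\tfrac{1}{2}+[\varepsilon_1]$ is the only choice producing $(t_1+1)/2$. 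Dividing out exactly this factor leaves $n'(\delta,s,\varepsilon,t)$ up to a harmless power of $\pi$.

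Next I would restrict the remaining factors to $\{g_{n,1}=0\}$. Multilinearity of the determinant gives
\[
\Phi_2(g) = g_{n+1,1}g_{n,2} - g_{n+1,2}g_{n,1}, \qquad \Psi_2(g) = g_{n,1}g_{n-1,2} - g_{n,2}g_{n-1,1},
\]
which on $\{g_{n,1}=0\}$ collapse to $g_{n+1,1}g_{n,2}$ and $-g_{n,2}g_{n-1,1}$. Combining with $\Phi_1(g)=g_{n+1,1}$ and using $|ab|_\varepsilon^s=|a|_\varepsilon^s|b|_\varepsilon^s$, the exponents group as $s_1+s_2$ on $|g_{n+1,1}|$ and $s_2+t_2$ on $|g_{n,2}|$, while the $-1$ inside $\Psi_2$ produces the overall sign $(-1)^{\varepsilon_2}$. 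The higher minors $\Phi_i,\Psi_j$ with $i,j\geq 3$ involve rows of $w_0 g$ disjoint from the $g_{n,1}$-entry and restrict unchanged, yielding exactly the stated formula.

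For the vanishing, the identity $\mathbf{K} = N/n'$ just established extends by analytic continuation to a meromorphic identity on all of $\CC^{n+1}\times\CC^n$. Using $\lambda_1-\nu_n = s_1+\tfrac{1}{2}$ and $[\xi_1+\eta_n]=[\delta_1]$, the factor $L(\tfrac{1}{2},\chi_1\psi_n^{-1})$ of $n'$ equals $\pi^{-(s_1+1+[\delta_1])/2}\Gamma((s_1+1+[\delta_1])/2)$ and has simple poles on $\{s_1+[\delta_1]+1\in-2\NN_0\}$. The numerator $N$ depends on $s_1$ only through $|g_{n+1,1}|_{\delta_1+\delta_2}^{s_1+s_2}$, whose singular hyperplane $\{s_1+s_2+[\delta_1+\delta_2]+1\in-2\NN_0\}$ is transverse; at a generic point of the former hyperplane $N$ is therefore a bona fide distribution, and the identity $\mathbf{K}\cdot n' = N$ with $n'$ having a simple pole forces $\mathbf{K}=0$ generically, hence everywhere on the hyperplane by the holomorphicity of $\mathbf{K}$ from Theorem~\ref{main thm}. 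The case $s_2+[\delta_2]+1\in-2\NN_0$ is parallel: with $\lambda_2-\nu_{n-1}=s_2+\tfrac{1}{2}$ and $\delta_2=\xi_2+\eta_{n-1}$, the relevant residual factor is $L(\tfrac{1}{2},\chi_2\psi_{n-1}^{-1})$, and one checks that the two $s_2$-dependent factors $|g_{n+1,1}|_{\delta_1+\delta_2}^{s_1+s_2}$ and $|g_{n,2}|_{\delta_2+\varepsilon_2}^{s_2+t_2}$ in $N$ remain regular along a generic point of $\{s_2+[\delta_2]+1\in-2\NN_0\}$.

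The main obstacle I anticipate is the bookkeeping of $L$-factors: verifying that $L(\tfrac{1}{2},\chi_2^{-1}\psi_n)$ is indeed the \emph{unique} $L$-factor contributing a pole at $(\varepsilon_1,t_1)=(0,-1)$, and that the two residual factors identified for the vanishing hyperplanes are not accompanied by compensating poles in $N$ on a positive-codimension subvariety of the hyperplane. Both reductions amount to translating $\lambda_i-\nu_j$ and the parity tags $[\xi_i+\eta_j]$ into the spectral parameters via $\lambda_i-\nu_{n+1-i}=s_i+\tfrac{1}{2}$ and $\nu_{n+1-i}-\lambda_{i+1}=t_i+\tfrac{1}{2}$.
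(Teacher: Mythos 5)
Your proposal is correct and follows essentially the same route as the paper's proof: extract the residue $\delta(g_{n,1})$ of the Riesz distribution $|\Psi_1|^{t_1}$ at $t_1=-1$ against the matching $\Gamma(\frac{t_1+1}{2})$ in the normalization, specialize the remaining (continuous) factors to $\{g_{n,1}=0\}$ via the explicit forms of $\Phi_2$ and $\Psi_2$, and then derive the vanishing from the surviving gamma factors $\Gamma(\frac{s_1+1+[\delta_1]}{2})$, $\Gamma(\frac{s_2+1+[\delta_2]}{2})$ in $n'$ together with regularity of the numerator on an open/generic part of the hyperplane and analytic continuation. The only cosmetic difference is that the paper pins down an explicit open region ($\Re(s_2)\geq 2m+[\delta_1]+1$) where the numerator is continuous, whereas you argue at generic points via transversality.
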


\begin{proof}
For the parameters $(s,t)$ considered above, both $|\Phi_i|_{\delta_i}^{s_i}$ ($i=1,\ldots,n+1$) and $|\Psi_j|_{\varepsilon_j}^{t_j}$ ($j=2,\ldots,n$) are continuous functions on $G$ which we view as an open subset of all $(n+1)\times(n+1)$ matrices over $\RR$. The residue of the Riesz distribution $|\Psi_1(g)|_{\varepsilon_1}^{t_1}=|g_{n,1}|^{t_1}$ at $t_1=-1$ is given by (see e.g. \cite[Chapter 3.3]{GS64})
$$ \left.\Gamma\Big(\frac{t_1+1}{2}\Big)^{-1}|\Psi_1(g)|_{\varepsilon_1}^{t_1}\right|_{t_1=-1}=\delta(g_{n,1}), $$
and as a functional on $C_c^\infty(G)$ it extends continuously to $C(G)$. Moreover, the delta distribution satisfies $\varphi(x)\delta(x)=\varphi(0)\delta(x)$ for every $\varphi\in C(\RR)$. Therefore, under the above assumptions on the parameters $(s,t)$, we can simply take the residue of $|\Psi_1(g)|_{\varepsilon_1}^{t_1}$ and multiply it with the remaining factors $|\Phi_i|_{\delta_i}^{s_i}$ and $|\Psi_j|_{\varepsilon_j}^{t_j}$ specialized to $g_{n,1}=0$. This shows the claimed formula.\\
To show the second statement, note that the normalization $n(\delta,s,\varepsilon,t)$ contains the gamma factors $\Gamma(\frac{s_1+[\delta_1]+1}{2})$ and $\Gamma(\frac{s_2+[\delta_2]+1}{2})$. If $s_1+[\delta_1]+1=-2m\in-2\NN_0$ and $\Re(s_2)\geq2m+[\delta_1]+1$, then the function $|g_{n+1,1}|^{s_1+s_2}_{\delta_1+\delta_2}$ is still continuous in $g$ and so are the other factors that are multiplied with $\delta(g_{n,1})$ in the numerator of the formula above. But since $n'(\delta,s,\varepsilon,t)^{-1}=0$ we obtain $\mathbf{K}_{\delta,s}^{\varepsilon,t}=0$. As $\mathbf{K}_{\delta,s}^{\varepsilon,t}$ is entire in $(s_2,\ldots,s_{n+1},t_2,\ldots,t_n)\in\CC^{2n-1}$ (with fixed $t_1=-1$ and $s_1=-1-[\delta_1]-2m$) and vanishes on the open set of parameters with positive real part, it is identically zero. The case $s_2+[\delta_2]+1\in-2\NN_0$ follows in a similar way.
\end{proof}

We now use the Bernstein--Sato identities to extend the condition $(\varepsilon_1,t_1)=(0,-1)$ to $t_1+[\varepsilon_1]+1\in-2\NN_0$. For this we use the Bernstein--Sato identity for $\calD_1$ (see Corollary~\ref{cor:BSidentitiesForBFK}) written in the spectral parameters and in a shifted version:
\begin{equation}\label{shift}
    \calD_1(s-e_2,t+e_1)\mathbf{K}_{\delta-e_2,s-e_2}^{\varepsilon+e_1,t+e_1} = p_1(\delta_2,s_2)p_2(\varepsilon_1,t_1)p_3(\delta,s,\varepsilon,t)\cdot\mathbf{K}_{\delta,s}^{\varepsilon,t},
\end{equation}
where
\begin{align*}
    p_1(\delta_2,s_2) &= \begin{cases}1&\mbox{if $\delta_2=0$,}\\s_2&\mbox{if $\delta_2=1$,}\end{cases} \qquad\qquad
    p_2(\varepsilon_1,t_1) = \begin{cases}1&\mbox{if $\varepsilon_1=0$,}\\t_1+1&\mbox{if $\varepsilon_1=1$,}\end{cases},\\
    p_3(\delta,s,\varepsilon,t) &= \const\times\prod_{\substack{j=1\\\delta_2+\varepsilon_2+\cdots+\delta_{n+1-j}=1}}^{n-2}(s_2+t_2+\cdots+s_{n+1-j}+n-j-1).
\end{align*}

\begin{lemma}\label{first zeros non-spherical}
Let $t_1+[\varepsilon_1]+1=-2k$, then $\mathbf{K}_{\delta,s}^{\varepsilon,t}=0$ if $s_1+[\delta_1]+1\in-2\NN_0$ or $s_2+[\delta_2]+1\in-2\NN_0$. 
\end{lemma}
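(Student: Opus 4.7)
The plan is to argue by induction on $k$, using the shifted Bernstein--Sato identity \eqref{shift} to reduce every case to the base case $(\varepsilon_1,t_1) = (0,-1)$ covered by Lemma~\ref{first zero}. For each $k\geq 0$ there are exactly two sub-cases compatible with $t_1+[\varepsilon_1]+1=-2k$, namely $(\varepsilon_1,t_1)=(0,-2k-1)$ and $(\varepsilon_1,t_1)=(1,-2k-2)$; the base case of the induction ($k=0$, $\varepsilon_1=0$) is precisely Lemma~\ref{first zero}, so I would concentrate on the inductive step.

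The inductive step proceeds by applying \eqref{shift} to the unshifted parameters $(\delta,s,\varepsilon,t)$ in the statement. The shifted parameters $(\delta',s',\varepsilon',t')=(\delta-e_2,s-e_2,\varepsilon+e_1,t+e_1)$ on the left-hand side of \eqref{shift} satisfy $\varepsilon'_1=\varepsilon_1+1$ and $t'_1=t_1+1$, and a direct computation gives
\[
t'_1+[\varepsilon'_1]+1=\begin{cases}-2k & \text{if }\varepsilon_1=0,\\-2(k+1) & \text{if }\varepsilon_1=1.\end{cases}
\]
Thus the $\varepsilon_1=0$ sub-case at level $k+1$ reduces to the induction hypothesis at level $k$, while the $\varepsilon_1=1$ sub-case at level $k+1$ reduces to the $\varepsilon_1=0$ sub-case at the same level. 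To close the induction I need the hypothesis $s_1+[\delta_1]+1\in-2\NN_0$ or $s_2+[\delta_2]+1\in-2\NN_0$ to transfer to the shifted parameters: the first of these is preserved verbatim since the shift does not touch $(\delta_1,s_1)$, and for the second one writes $s_2+[\delta_2]+1=-2m$ and computes
\[
s'_2+[\delta'_2]+1=(s_2-1)+(1-[\delta_2])+1=-2m-2[\delta_2]\in-2\NN_0,
\]
where $[\delta'_2]=1-[\delta_2]$ since $\delta'_2=\delta_2+1\pmod 2$.

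The remaining work is to verify that the scalar coefficient $p_1(\delta_2,s_2)p_2(\varepsilon_1,t_1)p_3(\delta,s,\varepsilon,t)$ in \eqref{shift} does not vanish identically on the codimension-two subvariety in question, so that $\mathbf{K}_{\delta,s}^{\varepsilon,t}=0$ on a Zariski-dense subset and hence everywhere on the subvariety by the holomorphic dependence established in Theorem~\ref{main thm}. The factor $p_2$ equals $1$ when $\varepsilon_1=0$ and equals $t_1+1$ when $\varepsilon_1=1$, which is non-zero here since $t_1\leq-2$ in that sub-case; the factors $p_1$ and $p_3$ are polynomials in the remaining free variables and are generically non-zero along the subvariety. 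The main potential obstacle is keeping the parity bookkeeping consistent through the induction, but once the transfer of conditions above is in place the argument closes cleanly; the substantive analytic content is already contained in Lemma~\ref{first zero} and in the Bernstein--Sato identity \eqref{shift}.
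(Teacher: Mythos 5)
Your proposal is correct and follows essentially the same route as the paper: an induction powered by the shifted Bernstein--Sato identity \eqref{shift}, with base case Lemma~\ref{first zero}, transferring the conditions $s_1+[\delta_1]+1\in-2\NN_0$ and $s_2+[\delta_2]+1\in-2\NN_0$ to the shifted parameters and checking that the coefficient $p_1p_2p_3$ is generically non-vanishing. The paper parametrizes the induction by the single integer $m$ with $t_1=-1-m$ and $m\equiv\varepsilon_1\pmod 2$ (so one step of the shift decreases $m$ by exactly one), whereas you keep the pair $(k,\varepsilon_1)$ and alternate between the two sub-cases; these are the same induction in different coordinates. One small slip: in your displayed computation of $t_1'+[\varepsilon_1']+1$, the right-hand side is off by one relative to the convention $t_1+[\varepsilon_1]+1=-2k$ set up just above (with that convention the shifted level is $-2(k-1)$ if $\varepsilon_1=0$ and $-2k$ if $\varepsilon_1=1$); the surrounding prose, which is phrased for a starting level of $k+1$, is internally consistent and the argument goes through, but the mismatch between the statement of the display and the preceding normalization of $k$ should be cleaned up.
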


\begin{proof}
We write $t_1=-1-m$ with $m\equiv\varepsilon_1\mod2$ and proceed by induction on $m$, the base case $m=0$ being Lemma~\ref{first zero}. Assume that the statement holds for $m$ and let $t_1=-1-(m+1)$ with $m\geq0$. In particular, $p_2(\varepsilon_1,t_1)\neq0$. Assume that either $s_1+[\delta_1]+1\in-2\NN_0$ or $s_2+[\delta_2]+1\in-2\NN_0$. In the latter case we also have $(s_2-1)+[\delta_2-1]+1\in-2\NN_0$, so that in both case $\mathbf{K}_{\delta-e_2,s-e_2}^{\varepsilon+e_1,t+e_1}=0$ by induction assumption. This implies that the left hand side of \eqref{shift} vanishes, and since $p_1(\delta_2,s_2)\neq0$, $p_2(\varepsilon_1,t_1)\neq0$ and for generic $(s_3,\ldots,s_{n+1},t_2,\ldots,t_n)$ also $p_3(\delta,s,\varepsilon,t)\neq0$, we obtain $\mathbf{K}_{\delta,s}^{\varepsilon,t}=0$ generically, hence everywhere by continuity. This shows the claim.
\end{proof}

\begin{proof}[Proof of Theorem~\ref{Alle nuller}]
The zeros we found in Lemma \ref{first zeros non-spherical} in $(\xi,\lambda,\eta,\nu)$-coordinates are 
        \begin{align*}
    \mathcal{N}_{1,2,n}&=\{\lambda_1-\nu_n+\tfrac{1}{2}+[\xi_1+\eta_n]\in-2\NN_0\}\cap \{\nu_n-\lambda_2+\tfrac{1}{2}+[\eta_n+\xi_2]\in-2\NN_0\},\\
    \mathcal{M}_{n-1,n,2}&=\{\nu_n-\lambda_2+\tfrac{1}{2}+[\eta_n+\xi_2]\in-2\NN_0\}\cap \{ \lambda_2-\nu_{n-1}+\tfrac{1}{2}+[\xi_2+\eta_{n-1}]\in-2\NN_0\}.
    \end{align*}
Consider the two functional identities 
\begin{align*}
    \mathbf{A}_{\xi,\lambda}^{\eta,\nu}\circ \mathbf{T}^{w_i}_{w_i(\xi,\lambda)}&=\frac{c}{\Gamma(\frac{\lambda_i-\lambda_{i+1}+1+[\xi_i+\xi_{i+1}]}{2})}\mathbf{A}_{w_i(\xi,\lambda)}^{\eta,\nu},\\
    \mathbf{T}_{\eta,\nu}^{w_i}\circ \mathbf{A}_{\xi,\lambda}^{\eta,\nu}&=\frac{c'}{\Gamma(\frac{\nu_{i+1}-\nu_i+1+[\eta_{i+1}+\eta_i]}{2})}\mathbf{A}_{\xi,\lambda}^{w_i(\eta,\nu)},
\end{align*}
for some constants $c$, $c'$ that depend only on $(\xi,\eta)$. Using the first one for $i=2$, we can for $\lambda_2-\lambda_3+1+[\xi_2+\xi_3]\notin -2\NN_0$ conclude that the zeros of $\mathbf{A}_{\xi,\lambda}^{\eta,\nu}$ from $\mathcal{N}_{1,2,n}$ are also zeros for $\mathbf{A}_{w_2(\xi,\lambda)}^{\eta,\nu}$. This implies that $\mathcal{N}_{1,3,n}\cap\{\lambda_3-\lambda_2+1+[\xi_2+\xi_3]\notin -2\NN_0\}$ are zeros for $\mathbf{A}_{\xi,\lambda}^{\eta,\nu}$. Since $\mathcal{N}_{1,3,n}\cap\{\lambda_3-\lambda_2+1+[\xi_2+\xi_3]\notin -2\NN_0\}$ is dense in $\mathcal{N}_{1,3,n}$, it follows that $\mathbf{A}_{\xi,\lambda}^{\eta,\nu}$ vanishes on all of $\mathcal{N}_{1,3,n}$. Applying this procedure with both functional identities and all permutations $w_i$, we get all the vanishing sets from Theorem \ref{Alle nuller}. Note that using the first identity for $i=1$ would not provide any new zeros as the gamma factor vanishes in the functional identity on all of $\mathcal{N}_{1,2,n}$, this is the reason that we keep $i<j$ in the statement of the Theorem.
\end{proof}

\section{Relation to Rankin--Selberg integrals and evaluation on the spherical vector}\label{sec:EvaluationSphericalVector}

\noindent In this final section we relate the symmetry breaking operators $\fedA_{\xi,\lambda}^{\eta,\nu}$ with distribution kernels $\fedK_{\xi,\lambda}^{\eta,\nu}$ to the local archimedean Rankin--Selberg integrals involving the Whittaker models of $\pi_{\xi,\lambda}$ and $\tau_{\eta,\nu}$. This is done by combining results of Li, Liu, Su and Sun~\cite{LLSS23} with a certain integral formula for the open $H$-orbit in $G/P_G\times H/P_H$. Since the evaluation of Rankin--Selberg integrals on spherical Whittaker vectors was carried out by Ishii and Stade~\cite{IS13}, we obtain a formula for the evaluation of our operators $\fedA_{\xi,\lambda}^{\eta,\nu}$ for $\xi=(0,\ldots,0)$ and $\eta=(0,\ldots,0)$ on the $K_G$-fixed vector $\mathbf{1}_\lambda\in \pi_\lambda=\pi_{\xi,\lambda}$ as a byproduct. For simplicity, everything in this chapter is computed up to a non-zero constant which is independent of $\lambda$ and $\nu$.

\subsection{Symmetry breaking operators vs. invariant forms}

We first relate the operator $\mathbf{A}_{\xi,\lambda}^{\eta,\nu}$, or rather its unnormalized counterpart $A_{\xi,\lambda}^{\eta,\nu}$, to an invariant form on the tensor product representation $\pi_{\xi,\lambda}\otimes\tau_{\eta,-\nu}$. The non-degenerate invariant form $\tau_{\eta,\nu}\otimes\tau_{\eta,-\nu}\to\CC$ from \eqref{eq:InvariantPairing} gives rise to a natural isomorphism (see e.g. \cite[Proposition 5.3]{KS18})
$$ \Hom_H(\pi_{\xi,\lambda}|_H,\tau_{\eta,\nu})\to\Hom_H(\pi_{\xi,\lambda}|_H\otimes\tau_{\eta,-\nu},\CC). $$
In this way, we obtain a holomorphic family of invariant forms
$$ \operatorname{SBO}_{\xi,\lambda}^{\eta,\nu}:\pi_{\xi,\lambda}|_H\otimes\tau_{\eta,-\nu}\to\CC, \quad f_1\otimes f_2\mapsto\int_{H/P_H}A_{\xi,\lambda}^{\eta,\nu}f_1(h)f_2(h)\,d(hP_H). $$
In the next section we relate this invariant form to another invariant form which is defined using Whittaker models and Rankin--Selberg integrals.

\subsection{Whittaker models and Rankin--Selberg integrals}

We fix the non-degenerate unitary character
$$ \psi_G:N_G\to\CC^\times, \quad \psi_G(g) = e^{2\pi i(g_{1,2}+\cdots+g_{n,n+1})} $$
of $N_G$ and consider the space
$$ C^\infty(G/N_G,\psi_G) = \{u\in C^\infty(G):u(gn)=\psi_G(n)^{-1}u(g)\mbox{ for all }g\in G,n\in N_G\}. $$
For every $(\xi,\lambda)\in(\ZZ/2\ZZ)^{n+1}\times\CC^{n+1}$ there is a unique (up to scalar multiples) continuous intertwining operator
$$ W^{\xi,\lambda}:\pi_{\xi,\lambda}\to C^\infty(G/N_G,\psi_G), \quad f\mapsto W^{\xi,\lambda}_f. $$
We normalize $W^{\xi,\lambda}$ by the property that
$$ W^{\xi,\lambda}_f(x) = \int_{N_G}f(xnw_0^G)\overline{\psi_G(n)}\,dn \qquad (x\in G) $$
for every $f$ for which the integral converges, where $w_0^G$ denotes the longest Weyl group element. The image of $W^{\xi,\lambda}$ is called \emph{Whittaker model} for $\pi_{\xi,\lambda}$.

Using the character $\psi_H=\overline{\psi_G}|_{N_H}$ of $N_H$, we can define a similar Whittaker model for principal series of $H$:
$$ \overline{W}^{\eta,\nu}:\tau_{\eta,\nu}\to C^\infty(H/N_H,\psi_H), \quad f\mapsto\overline{W}^{\eta,\nu}_f, \quad \overline{W}^{\eta,\nu}_f(x) = \int_{N_H}f(xnw_0^H)\,dn. $$

The archimedean local Rankin--Selberg integrals construct natural $H$-invariant forms on the tensor products of the Whittaker models of $\pi_{\xi,\lambda}|_H$ and $\tau_{\eta,\nu}$. Composing them with the maps $W^{\xi,\lambda}$ and $\overline{W}^{\eta,-\nu}$, we obtain a family of $H$-invariant forms
$$ \operatorname{RS}_{\xi,\lambda}^{\eta,\nu}:\pi_{\xi,\lambda}|_H\otimes\tau_{\eta,-\nu}\to\CC, \quad f_1\otimes f_2\mapsto\int_{H/N_H}W^{\xi,\lambda}_{f_1}(h)\overline{W}^{\eta,-\nu}_{f_2}(h)\,d(hN_H), $$
where the integral converges absolutely for sufficiently positive $\lambda$ and $-\nu$ and is extended meromorphically to all $(\lambda,\nu)\in\CC^{n+1}\times\CC^n$.

\subsection{Comparing invariant forms}
In this section we explicitly relate the invariant forms $\operatorname{SBO}_{\xi,\lambda}^{\eta,\nu}$ and $\operatorname{RS}_{\xi,\lambda}^{\eta,\nu}$. The result is most easily stated in terms of the characters $\chi_i(x)=|x|_{\xi_i}^{\lambda_i}$ and $\psi_j(x)=|x|_{\eta_j}^{\nu_j}$ of $\RR^\times$ and their corresponding local archimedean $L$-factors:

\begin{proposition}\label{prop:ComparisonSBOvsRS}
    For all $(\xi,\eta)\in(\ZZ/2\ZZ)^{n+1}\times(\ZZ/2\ZZ)^n$ the following holds as identity of meromorphic functions in $(\lambda,\nu)\in\CC^{n+1}\times\CC^n$:
    $$ \operatorname{SBO}_\lambda^\nu = \const\times\prod_{i+j\geq n+2}\frac{L(\frac{1}{2},\chi_i^{-1}\psi_j)}{L(\frac{1}{2},\chi_i\psi_j^{-1})}\operatorname{RS}_\lambda^\nu. $$
\end{proposition}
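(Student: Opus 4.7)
The strategy is to exploit uniqueness of $H$-invariant forms. By the strong Gelfand property of $(G,H)$ recalled in Section~\ref{sec:SymmetryBreakingKernels}, the space $\Hom_H(\pi_{\xi,\lambda}|_H\otimes\tau_{\eta,-\nu},\CC)$ is at most one-dimensional for generic $(\lambda,\nu)$. Both $\operatorname{SBO}_{\xi,\lambda}^{\eta,\nu}$ and $\operatorname{RS}_{\xi,\lambda}^{\eta,\nu}$ belong to this space and depend meromorphically on the parameters, so they must be proportional by a meromorphic scalar $c(\xi,\lambda,\eta,\nu)$. The task thus reduces to identifying this scalar.

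First I would unfold $\operatorname{SBO}_{\xi,\lambda}^{\eta,\nu}$ using the integral representation of $A_{\xi,\lambda}^{\eta,\nu}$ via the kernel $K_{\xi,\lambda}^{\eta,\nu}$:
\[
\operatorname{SBO}_{\xi,\lambda}^{\eta,\nu}(f_1\otimes f_2)=\int_{H/P_H}\int_{G/P_G}K_{\xi,\lambda}^{\eta,\nu}(h^{-1}g)f_1(g)f_2(h)\,d(gP_G)\,d(hP_H).
\]
Using the geometry of the open $H$-orbit on $G/P_G\times H/P_H$ together with an appropriate change of variables, this double integral can be rewritten as an $H$-invariant integral over $H$ (or a suitable quotient) involving $f_1$ and $f_2$ in a form that matches the integral studied by Li--Liu--Su--Sun. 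This reformulation is essentially the content of the auxiliary comparison alluded to in the paper as Proposition~\ref{prop:ComparisonOvsSBO}.

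Once this identification is in place, I would invoke the main result of \cite{LLSS23}, which relates such $H$-integrals to the local archimedean Rankin--Selberg integral up to an explicit product of local gamma factors. Since the Rankin--Selberg side is by construction $\operatorname{RS}_{\xi,\lambda}^{\eta,\nu}$, comparing normalizations yields $c(\xi,\lambda,\eta,\nu)$. A careful bookkeeping of the gamma factors---those arising from the Whittaker realization on one side and from the Riesz-type distributions $|\Phi_i|_{\delta_i}^{s_i}$ and $|\Psi_j|_{\varepsilon_j}^{t_j}$ on the other---should single out precisely the product $\prod_{i+j\geq n+2}L(\tfrac{1}{2},\chi_i^{-1}\psi_j)/L(\tfrac{1}{2},\chi_i\psi_j^{-1})$, with the remaining factors either cancelling or absorbing into the overall constant.

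The main obstacle will be matching conventions and normalizations coherently: the Whittaker functionals $W^{\xi,\lambda}$ and $\overline{W}^{\eta,-\nu}$ are canonical only up to scalar, the Rankin--Selberg integral converges only in a restricted region before meromorphic continuation, and the dictionary of \cite{LLSS23} introduces its own gamma factors that must be reconciled with our specific definition of $K_{\xi,\lambda}^{\eta,\nu}$ and with the parameterization of the open $H$-orbit. Once the identification of $\operatorname{SBO}_{\xi,\lambda}^{\eta,\nu}$ with the $H$-integral from \cite{LLSS23} is firmly in hand, the remaining comparison is a finite and essentially combinatorial gamma-function computation, with the split $i+j\leq n+1$ versus $i+j\geq n+2$ reflecting exactly which $L$-factors are built into the unnormalized kernel $K_{\xi,\lambda}^{\eta,\nu}$ (via its convergence region) and which must be supplied by the Whittaker side.
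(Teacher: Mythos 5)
Your overall route is the paper's: introduce the open-orbit integral $\calO_{\xi,\lambda}^{\eta,\nu}(f_1\otimes f_2)=\int_H f_1(hz_1)f_2(hz_2)\,dh$ as an intermediate $H$-invariant form, show it agrees with $\operatorname{SBO}_{\xi,\lambda}^{\eta,\nu}$, and quote the main theorem of Li--Liu--Su--Sun for its comparison with $\operatorname{RS}_{\xi,\lambda}^{\eta,\nu}$. However, two points need correction. First, the step you defer as "an appropriate change of variables" is the entire mathematical content of the proposition: one must establish the integration formula $\int_{\overline{N}_G^\reg}f\,d\overline{n}_G=\int_{P_H^\reg}f(p_Hz_0p_G)\,p_G^{2\rho_G}p_H^{-2\rho_H}\,d^rp_H$ (Lemma~\ref{integral lemma} in the paper) and then check that the Jacobian factors $p_G^{2\rho_G}p_H^{-2\rho_H}$ cancel exactly against the $P_G$- and $P_H$-equivariance of $K_{\xi,\lambda}^{\eta,\nu}$, $f_1$ and $f_2$; this works only because the base point $z_0$ of the open orbit satisfies $\Phi_i(z_0)=\Psi_j(z_0)=1$, hence $K_{\xi,\lambda}^{\eta,\nu}(z_0)=1$ for all parameters. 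Without this, you have not shown that the proportionality constant between $\calO$ and $\operatorname{SBO}$ is independent of $(\lambda,\nu)$, which is precisely what the proposition requires.

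Second, your closing prediction about the "bookkeeping of gamma factors" is off: in the paper's proof no $L$-factors arise from the $\operatorname{SBO}$--$\calO$ comparison at all (it yields a bare nonzero constant), and the full product $\prod_{i+j\geq n+2}L(\tfrac12,\chi_i^{-1}\psi_j)/L(\tfrac12,\chi_i\psi_j^{-1})$ is imported verbatim from \cite[Theorem~1.6(b)]{LLSS23}. The split $i+j\leq n+1$ versus $i+j\geq n+2$ is a feature of that theorem, not something extracted from the convergence region of the kernel versus the Whittaker side; attributing part of the ratio to the Riesz-type distributions $|\Phi_i|^{s_i}_{\delta_i}$, $|\Psi_j|^{t_j}_{\varepsilon_j}$ would lead you to look for cancellations that do not occur. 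Once you carry out the change-of-variables computation and observe $K_{\xi,\lambda}^{\eta,\nu}(z_0)=1$, the proof is just the concatenation of the two comparisons, with no further gamma-function arithmetic.
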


This result is proven by comparing both $\operatorname{SBO}_{\xi,\lambda}^{\eta,\nu}$ and $\operatorname{RS}_{\xi,\lambda}^{\eta,\nu}$ to yet another $H$-invariant form (see Theorem~\ref{thm:LLSS} and Proposition~\ref{prop:ComparisonOvsSBO}). It is simply given by integrating over the open dense $H$-orbit in $G/P_G\times H/P_H$, where $H$ is acting diagonally. Note that by \cite[Lemma 6.3]{F23} there is a unique open double coset in $P_H\backslash G/P_G$. By means of the diffeomorphism $\diag(H)\backslash(G\times H)\to G,\,\diag(H)(g,h)\mapsto h^{-1}g$, this implies that there is a unique open $H$-orbit in $G/P_G\times H/P_H$. Let $(z_1,z_2)\in G\times H$ be a representative of this open $H$-orbit and put
$$ \calO_{\xi,\lambda}^{\eta,\nu}:\pi_{\xi,\lambda}\otimes\tau_{\eta,-\nu}\to\CC, \quad f_1\otimes f_2\mapsto\int_H f_1(hz_1)f_2(hz_2)\,dh. $$
This integral converges absolutely for $(\lambda,\nu)$ in some non-empty open domain of $\CC^{n+1}\times\CC^n$ (see \cite[Proposition 1.4]{LLSS23}). The main result of \cite{LLSS23} is a comparison between $\calO_{\xi,\lambda}^{\eta,\nu}$ and $\operatorname{RS}_{\xi,\lambda}^{\eta,\nu}$ (which also implies that $\calO_{\xi,\lambda}^{\eta,\nu}$ extends meromorphically to all $(\lambda,\nu)\in\CC^{n+1}\times\CC^n$):

\begin{theorem}[{see \cite[Theorem 1.6~(b)]{LLSS23}}]\label{thm:LLSS}
    For all $(\xi,\eta)\in(\ZZ/2\ZZ)^{n+1}\times(\ZZ/2\ZZ)^n$ the following holds as identity of meromorphic functions in $(\lambda,\nu)\in\CC^{n+1}\times\CC^n$:
    $$ \calO_{\xi,\lambda}^{\eta,\nu} = \const\times\prod_{i+j\geq n+2}\frac{L(\frac{1}{2},\chi_i^{-1}\psi_j)}{L(\frac{1}{2},\chi_i\psi_j^{-1})}\operatorname{RS}_{\xi,\lambda}^{\eta,\nu}. $$
\end{theorem}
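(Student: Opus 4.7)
The plan is to exploit the multiplicity-one property of the strong Gelfand pair $(G,H)$ to reduce the identity to determining a single meromorphic scalar, and then to compute this scalar by unfolding the Rankin--Selberg integral into one-dimensional Tate integrals whose functional equations produce the required ratio of local $L$-factors.

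First, I would verify that both $\calO_{\xi,\lambda}^{\eta,\nu}$ and $\operatorname{RS}_{\xi,\lambda}^{\eta,\nu}$ belong to $\Hom_H(\pi_{\xi,\lambda}|_H\otimes\tau_{\eta,-\nu},\CC)$. For $\calO_{\xi,\lambda}^{\eta,\nu}$ this is immediate from the left-invariance of Haar measure on $H$ combined with the $(P_G,P_H)$-equivariance of $f_1\otimes f_2$; for $\operatorname{RS}_{\xi,\lambda}^{\eta,\nu}$ it follows from the $H$-equivariance of the Whittaker maps $W^{\xi,\lambda}$ and $\overline{W}^{\eta,-\nu}$ together with the left-invariance of the measure on $H/N_H$. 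Since $(G,H)$ is a strong Gelfand pair, this Hom-space is at most one-dimensional for generic $(\lambda,\nu)$, so there is a unique meromorphic scalar $c(\xi,\lambda,\eta,\nu)$ with
\[
\calO_{\xi,\lambda}^{\eta,\nu}=c(\xi,\lambda,\eta,\nu)\cdot\operatorname{RS}_{\xi,\lambda}^{\eta,\nu}.
\]
Neither side vanishes identically in its region of absolute convergence, so $c$ is a well-defined meromorphic function of $(\lambda,\nu)\in\CC^{n+1}\times\CC^n$.

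To identify $c$, I would unfold $\operatorname{RS}_{\xi,\lambda}^{\eta,\nu}$ by substituting the Jacquet-integral definitions of $W^{\xi,\lambda}_{f_1}$ and $\overline{W}^{\eta,-\nu}_{f_2}$ and interchanging integrations (justified in a suitable open domain of $(\lambda,\nu)$ and then extended meromorphically). This yields an iterated integral over $(h,n_G,n_H)\in H/N_H\times N_G\times N_H$ of $f_1(hn_Gw_0^G)\,f_2(hn_Hw_0^H)$ twisted by $\overline{\psi_G(n_G)}$. Writing $N_G=(N_G\cap H)\cdot N_G^{\perp}$ up to a set of measure zero, where $N_G^{\perp}$ is generated by root spaces for positive roots of $G$ not coming from $H$, the compatibility $\psi_H=\overline{\psi_G}|_{N_G\cap H}$ lets one absorb the $N_H$-integration into the $(N_G\cap H)$-piece using the right-$P_H$-equivariance of $f_2$. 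After pushing $w_0^G$ and $w_0^H$ through via a change of variables in $h$, the remaining integrand factors as $f_1(h\cdot z_1')\,f_2(h\cdot z_2')$ times a non-degenerate character on $N_G^{\perp}$, so pulling the $H$-integration outside recovers $\calO_{\xi,\lambda}^{\eta,\nu}$ evaluated at a suitable orbit representative.

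The residual integral over $N_G^{\perp}$ is then a product of one-dimensional Tate-type integrals, one for each positive root of $G$ not coming from $H$. These roots correspond precisely to the pairs $(i,j)\in\{1,\ldots,n+1\}\times\{1,\ldots,n\}$ with $i+j\geq n+2$, and each coordinate integral reduces, after the Iwasawa-type decomposition and the $\rho$-shift built into normalized induction, to an integral of the form $\int_\RR|x|^{s_{ij}}_{\xi_i+\eta_j}\overline{\psi_G(x)}\,dx$, which by Tate's local functional equation over $\RR$ equals, up to a constant independent of $(\lambda,\nu)$,
\[
\frac{L(\tfrac{1}{2},\chi_i^{-1}\psi_j)}{L(\tfrac{1}{2},\chi_i\psi_j^{-1})}.
\]
Multiplying over all such pairs yields the asserted $L$-factor product. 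The main obstacle is the combinatorial matching between the Bruhat-type decomposition of $N_G$, the root-labeling convention inherited from the minors $\Phi_i,\Psi_j$ used to define the orbit representative, and the index set $\{(i,j):i+j\geq n+2\}$, together with careful bookkeeping of the Jacobians of the change of variables so that only the claimed $L$-factor ratio (and an overall $(\lambda,\nu)$-independent constant) survives; this is the technical heart of \cite{LLSS23}.
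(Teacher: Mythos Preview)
The paper does not prove this theorem at all; the statement is simply quoted from \cite[Theorem~1.6(b)]{LLSS23}, with a one-line remark that their principal series are induced from the opposite parabolics, so one translates by conjugating with the longest Weyl group elements $w_0^G$ and $w_0^H$. There is therefore no proof in the paper to compare against.

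Your proposal goes further than the paper by sketching the actual strategy of \cite{LLSS23}: multiplicity one reduces the identity to computing a single meromorphic scalar, and this scalar is found by unfolding the Jacquet integrals defining the Whittaker functions and recognizing what remains as a product of one-dimensional Tate integrals indexed by positive roots of $G$ not coming from $H$. This is correct in spirit and matches the approach of \cite{LLSS23}. However, the unfolding step is more delicate than your sketch indicates. After writing $N_G=(N_G\cap H)\cdot N_G^\perp$ and absorbing $N_G\cap H$ into the $N_H$-integral via $\psi_H=\overline{\psi_G}|_{N_H}$, the variables $h\in H/N_H$ and $n^\perp\in N_G^\perp$ remain entangled inside $f_1(hn_Gw_0^G)$; the integrand does not factor cleanly as $f_1(hz_1')f_2(hz_2')$ times a character on $N_G^\perp$ after a single change of variables. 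One needs the Bruhat/Iwasawa decomposition of elements of the form $n^\perp w_0^G(w_0^H)^{-1}$ relative to $H$ and $P_G$, and tracking the resulting $A_G$-component is precisely where the individual $L$-factor ratios and the index condition $i+j\geq n+2$ appear. In \cite{LLSS23} this is carried out by induction on $n$, peeling off one Tate integral at each step, rather than by a single global unfolding. You do acknowledge at the end that this bookkeeping is ``the technical heart'' and defer to \cite{LLSS23} for it, so your sketch is honest about where the real work lies; but the intermediate factorization claim as written would not survive a literal verification.
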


(Note that \cite{LLSS23} is using principal series induced from the opposite parabolic subgroups $\overline{P}_G$ and $\overline{P}_H$, so to translate their result one has to conjugate by the longest Weyl group elements $w_0^G$ and $w_0^H$.)

In order to prove Proposition~\ref{prop:ComparisonSBOvsRS} it therefore remains to compare the invariant forms $\operatorname{SBO}_{\xi,\lambda}^{\eta,\nu}$ defined by the symmetry breaking operators $A_{\xi,\lambda}^{\eta,\nu}$ to the forms $\calO_{\xi,\lambda}^{\eta,\nu}$.

\begin{proposition}\label{prop:ComparisonOvsSBO}
    For all $(\xi,\eta)\in(\ZZ/2\ZZ)^{n+1}\times(\ZZ/2\ZZ)^n$ the following holds as identity of meromorphic functions in $(\lambda,\nu)\in\CC^{n+1}\times\CC^n$:
    $$ \calO_{\xi,\lambda}^{\eta,\nu} = \const\times\operatorname{SBO}_{\xi,\lambda}^{\eta,\nu}. $$
\end{proposition}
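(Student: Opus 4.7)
The plan is to exploit the multiplicity-one property to reduce the proposition to the computation of a single proportionality constant, then evaluate that constant by parametrizing the open $H$-orbit in $G/P_G \times H/P_H$. Since $(G,H)$ is a strong Gelfand pair (see \cite{SZ12}), the space $\Hom_H(\pi_{\xi,\lambda}|_H \otimes \tau_{\eta,-\nu}, \CC)$ is generically at most one-dimensional, so $\operatorname{SBO}_{\xi,\lambda}^{\eta,\nu}$ and $\calO_{\xi,\lambda}^{\eta,\nu}$ must be proportional by a meromorphic function $c(\xi,\lambda,\eta,\nu)$. The whole content of the proposition is that $c$ is in fact constant in $(\lambda,\nu)$.

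Working in the open domain where both integrals converge absolutely, I would fix a representative $(z_1, z_2) \in G \times H$ of the unique open $H$-orbit $\calV \subset G/P_G \times H/P_H$; by \cite[Lemma 6.3]{F23} we may take $g_0 := z_2^{-1} z_1$ satisfying $\Phi_i(g_0), \Psi_j(g_0) \neq 0$ for all $i,j$. A dimension count ($\dim H = \dim(G/P_G \times H/P_H) = n^2$) shows the $H$-stabilizer of $(z_1 P_G, z_2 P_H)$ is discrete. The density $d(gP_G)d(hP_H)|_\calV$ and the push-forward of Haar measure on $H$ via $h \mapsto (hz_1 P_G, hz_2 P_H)$ are both $H$-invariant densities on $\calV$, so they differ by a constant $c_0$ depending only on Haar measure normalizations. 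Combining this with the identity $K_{\xi,\lambda}^{\eta,\nu}((hz_2)^{-1}(hz_1)) = K_{\xi,\lambda}^{\eta,\nu}(g_0)$ and a careful bookkeeping of how the $P_G \times P_H$-equivariance factors of $K_{\xi,\lambda}^{\eta,\nu}(h^{-1}g)$ cancel against those of $f_1(g) f_2(h)$ along the orbit yields
\begin{equation*}
\operatorname{SBO}_{\xi,\lambda}^{\eta,\nu}(f_1 \otimes f_2) = c_0\, K_{\xi,\lambda}^{\eta,\nu}(g_0)\, \calO_{\xi,\lambda}^{\eta,\nu}(f_1 \otimes f_2).
\end{equation*}

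It then remains to show that $K_{\xi,\lambda}^{\eta,\nu}(g_0)$ can be rendered independent of $(\lambda,\nu)$ by a judicious choice of $(z_1, z_2)$. Using the freedom $(z_1, z_2) \mapsto (z_1 p_G, z_2 p_H)$ and the fact that the minors $\Phi_1,\ldots,\Phi_{n+1}$ and $\Psi_1,\ldots,\Psi_n$ transform by $2n+1$ linearly independent characters of the diagonal torus $A_G \times A_H$ (an easy check from their explicit form, matching $\dim A_G + \dim A_H = 2n+1$), I can normalize $|\Phi_i(g_0)| = |\Psi_j(g_0)| = 1$ for all $i, j$. Then
\begin{equation*}
K_{\xi,\lambda}^{\eta,\nu}(g_0) = \prod_{i=1}^{n+1} \sgn(\Phi_i(g_0))^{\delta_i} \prod_{j=1}^{n} \sgn(\Psi_j(g_0))^{\varepsilon_j} \in \{-1, +1\}
\end{equation*}
depends only on $(\xi, \eta)$, not on $(\lambda, \nu)$. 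Meromorphic continuation then extends the identity to all $(\lambda, \nu) \in \CC^{n+1} \times \CC^n$. The main obstacle I anticipate is the clean identification of the two invariant densities on $\calV$ in the second paragraph: this requires unwinding the line-bundle trivializations implicit in the definitions of $d(gP_G)$ and $d(hP_H)$, verifying that the diagonal $H$-stabilizer of the orbit representative is trivial (so the push-forward is well-defined), and carefully tracking the equivariance cancellations that allow the integrand to descend to an honest function on $H$ via the orbit parametrization.
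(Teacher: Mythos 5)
Your proposal is correct and follows essentially the same route as the paper: both arguments parametrize the open $H$-orbit in $G/P_G\times H/P_H$ by $h\mapsto(hz_0P_G,hP_H)$ (with trivial stabilizer, since $P_H\cap z_0P_Gz_0^{-1}=\{e\}$), match the push-forward of Haar measure on $H$ with the invariant integral on the flag varieties, and use a representative on which all minors $\Phi_i,\Psi_j$ equal $1$ so that $K_{\xi,\lambda}^{\eta,\nu}(z_0)=1$ independently of $(\lambda,\nu)$. The one step you flag as delicate --- the identification of the two invariant densities and the equivariance bookkeeping --- is exactly what the paper carries out concretely in Lemma~\ref{integral lemma}, where the change of variables $\overline{N}_G^\reg\to P_H^\reg$ is shown to carry the Jacobian $p_G^{2\rho_G}p_H^{-2\rho_H}$, which then cancels against the equivariance factors of $K$, $f_1$ and $f_2$.
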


The proof requires a change of variables formula which we show separately. First note that the open $H$-orbit in $G/P_G\times H/P_H$ contains the representative $(z_1,z_2)=(z_0,e)$, where $z_0\in G=\GL(n+1,\RR)$ is the representative of the open double coset in $P_H\backslash G/P_G$ with entries
$$ (z_0)_{i,j} = \begin{cases}1&\mbox{if $i+j=n+1$ or $i+j=n+2$,}\\0&\mbox{else.}\end{cases} $$
It has the property that $\Phi_i(z_0)=\Psi_j(z_0)=1$ for all $1\leq i\leq n+1$ and $1\leq j\leq n$, so $K_\lambda^\nu(z_0)=1$ for all $\lambda$ and $\nu$. Since both $\overline{N}_GP_G/P_G$ and $P_Hz_0P_G/P_G$ are open dense in $G/P_G$, the two subsets
$$ \overline{N}_G^\reg = \overline{N}_G\cap P_Hz_0P_G \subseteq \overline{N}_G \qquad \mbox{and} \qquad P_H^\reg = P_H\cap\overline{N}_GP_Gz_0^{-1} \subseteq P_H $$
of $\overline{N}_G$ and $P_H^\reg$ are both open and dense. Now, every $\overline{n}_G\in\overline{N}_G^\reg$ can be written as $\overline{n}_G=p_Hz_0p_G$ with $p_G\in P_G$ and $p_H\in P_H^\reg$. Conversely, for every $p_H\in P_H^\reg$ there exists $p_G\in P_G$ such that $\overline{n}_G=p_Hz_0p_G\in\overline{N}_G$. (In fact, both decompositions are unique since  $P_H\cap z_0P_Gz_0^{-1}=\{e\}$.) We obtain a diffeomorphism
$$ \overline{N}_G^\reg\to P_H^\reg, \quad \overline{n}_G\mapsto p_H, \qquad \mbox{where }\overline{n}_G\in p_Hz_0P_G, $$
and the following lemma computes the Jacobian of this diffeomorphism:

\begin{lemma}\label{integral lemma}
For $f\in L^1(\overline{N}_G^\reg)$ the following integral formula holds
\[
\int_{\overline{N}_G^\reg}f(\overline{n}_G)\,d\overline{n}_G=\int_{P_H^\reg} f(p_Hz_0p_G)p_G^{2\rho_G}p_H^{-2\rho_H}d^rp_H,
\]
where $p_G$ denotes the unique element in $P_G$ such that $p_Hz_0p_G\in \overline{N}_G^\reg$ and $d^rp_H$ is a suitably normalized right Haar measure on $P_H$.
\end{lemma}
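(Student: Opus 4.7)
The plan is to realize both sides as two expressions for the same $G$-invariant integral on $G/P_G$, evaluated via two different open dense parametrizations. First, I would lift $f$ to a section $\tilde f$ of the density bundle $G\times_{P_G}\CC_{2\rho_G}\to G/P_G$ supported in the open dense subset $\overline{N}_G^\reg P_G$ by setting $\tilde f(\overline{n}_G p_G):=p_G^{-2\rho_G}f(\overline{n}_G)$ for $\overline{n}_G\in\overline{N}_G^\reg$ and $p_G\in P_G$. The built-in $P_G$-equivariance $\tilde f(gp_G)=p_G^{-2\rho_G}\tilde f(g)$ together with the $\overline{N}_G$-formula in \eqref{eq:GinvariantIntegralKNbar} gives
$$\int_{\overline{N}_G^\reg}f(\overline{n}_G)\,d\overline{n}_G=\int_{G/P_G}\tilde f(gP_G)\,d(gP_G).$$

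Next, I would exploit the second open parametrization $P_H\to G/P_G$, $p_H\mapsto p_Hz_0P_G$. Because $P_H\cap z_0P_Gz_0^{-1}=\{e\}$, this map is an open immersion, and its restriction to $P_H^\reg$ is a diffeomorphism onto the intersection of the two open cells inside $G/P_G$. The $G$-invariance of $d(gP_G)$ implies in particular left-$P_H$-invariance of its pullback along this parametrization, so that pullback equals a positive scalar multiple of a left Haar measure $d^\ell p_H$ on $P_H$. A short coordinate computation in the $(M_H,A_H,N_H)$-decomposition (or the standard modular-function identity for the Borel $P_H$) gives the relation $d^\ell p_H=p_H^{-2\rho_H}\,d^r p_H$. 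Finally, writing $p_Hz_0=\overline{n}_G p_G^{-1}$ with $\overline{n}_G=p_Hz_0p_G$ and applying the equivariance of $\tilde f$ yields $\tilde f(p_Hz_0)=p_G^{2\rho_G}f(p_Hz_0p_G)$.

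Combining these three inputs and absorbing the undetermined positive scalar from Step~2 into the normalization of $d^r p_H$ (as the statement permits) produces the displayed identity. The main, and essentially only, difficulty is sign bookkeeping: the $P_G$-equivariance of $\tilde f$, the ratio between $d^\ell p_H$ and $d^r p_H$, and the passage from $\overline{n}_G$ to $p_Hz_0$ all contribute $\pm 2\rho$ factors that must be reconciled with the convention implicit in $d(gP_G)$ from Section~\ref{sec:PrincipalSeries}. Once these are fixed, the argument is purely invariance-theoretic and requires no non-trivial calculation.
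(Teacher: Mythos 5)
Your proposal follows essentially the same strategy as the paper: extend $f$ to a section of the density bundle $G\times_{P_G}\CC_{2\rho_G}$ supported in the open cell $\overline{N}_G^\reg P_G$, evaluate the $G$-invariant integral $d(gP_G)$ in the two open dense parametrizations by $\overline{N}_G$ and $P_H\ni p_H\mapsto p_Hz_0P_G$, convert $d^{\ell}p_H$ to $p_H^{-2\rho_H}d^r p_H$ via the modular function, and use equivariance to replace $\tilde f(p_Hz_0)$ by $p_G^{2\rho_G}f(p_Hz_0p_G)$. The only cosmetic difference is that you introduce explicit notation $\tilde f$ for the lifted section and separate the $P_H$-parametrization step into its own invariance argument, whereas the paper states both open-cell integral formulas at once; the mathematical content is identical.
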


\begin{proof}
Recall that there exists a unique (up to scalar multiples) $G$-invariant integral $d(gP)$ on sections of the bundle $G\times_{P_G}\CC_{2\rho_G}$ (cf. Section~\ref{sec:PrincipalSeries}). Since both $\overline{N}_GP_G/P_G\simeq\overline{N}_G$ and $P_Hz_0P_G/P_G\simeq P_H$ are open and dense in $G$ and the integral $d(gP_G)$ is $G$-invariant, it follows that the (left) Haar measures $d\overline{n}_G$ and $d^lp_H$ on $\overline{N}_G$ and $P_H$ can be normalized such that
$$ \int_{G/P_G}f(gP_G)\,d(gP_G) = \int_{\overline{N}_G}f(\overline{n}_G)\,d\overline{n}_G = \int_{P_H}f(p_Hz_0)\,d^lp_H \quad \mbox{for all }f\in L^1(G\times_{P_G}\CC_{2\rho_G}). $$
Now, given a function $f\in L^1(\overline{N}_G^\reg)$, we can extend it to a function $f\in L^1(G\times_{P_G}\CC_{2\rho_G})$ by
$$ f(g) = \begin{cases}p_G^{-2\rho_G}f(\overline{n}_G)&\mbox{for }g=\overline{n}_Gp_G\in\overline{N}_G^\reg P_G,\\0&\mbox{else.}\end{cases} $$
Then
$$ \int_{\overline{N}_G^\reg} f(\overline{n}_G)\,d\overline{n}_G = \int_{G/P_G} f(gP_G)\,d(gP_G) = \int_{P_H} f(p_Hz_0)\,d^lp_H = \int_{P_H}f(p_Hz_0)p_H^{-2\rho_H}\,d^rp_H. $$If now $p_G$ denotes the unique element in $P_G$ such that $\overline{n}_G=p_Hz_0p_G\in\overline{N}_G$, then this equals
\[ \int_{P_H^\reg}f(p_Hz_0p_G)p_G^{2\rho_G}p_H^{-2\rho_H}\,d^rp_H.\qedhere \]
\end{proof}

\begin{proof}[Proof of Proposition~\ref{prop:ComparisonOvsSBO}]
Let $f_1\in \pi_{\xi,\lambda}$ and $f_2\in \tau_{\eta,-\nu}$ and assume that $(\lambda,\nu)\in\CC^{n+1}\times\CC^n$ are such that $K_{\xi,\lambda}^{\eta,\nu}(g)$ is a locally integrable function. Using that $\overline{N}_HP_H^\reg$ is open and dense in $H$ and that the (appropriately normalized) Haar measures $dh$, $d\overline{n}_H$ and $dp_H$ on $H$, $\overline{n}_H$ and $dp_H$ satisfy $dh=d\overline{n}_H\,d^rp_H$ (see e.g. \cite[Chapter V, \S6, Consequence 5]{K16}), we find
\[
\calO_{\xi,\lambda}^{\eta,\nu}(f_1\otimes f_2) = \int_Hf_1(hz_0)f_2(h)\,dh=\int_{\overline{N}_H}\int_{P_H^\reg} f_1(\overline{n}_Hp_Hz_0)f_2(\overline{n}_H)p_H^{\eta,\nu-\rho_H}\,d^rp_H\,d\overline{n}_H,
\]
where we have used the principal series equivariance for $f_2$ in the second step, denoting by $p_H\mapsto p_H^{\eta,\nu}$ the character of $P_H$ that $\tau_{\eta,\nu}$ is induced from. Writing $K_{\xi,\lambda}^{\eta,\nu}(p_Hz_0p_G)=p_G^{\xi,\lambda-\rho_G}p_H^{\eta,\nu+\rho_H}K_\lambda^\nu(z_0)=p_G^{\xi,\lambda-\rho_G}p_H^{\eta,\nu+\rho_H}$ and using the principal series equivariance for $f_1$ gives
\[
    = \int_{\overline{N}_H}\int_{P_H^\reg}K_{\xi,\lambda}^{\eta,\nu}(p_Hz_0p_G)f_1(\overline{n}_Hp_Hz_0p_G)f_2(\overline{n}_H)p_G^{2\rho_G}p_H^{-2\rho_H}\,d^rp_H\,d\overline{n}_H.
\]
Now we can apply Lemma~\ref{integral lemma} to obtain
\begin{multline*}
    = \int_{\overline{N}_H}\int_{\overline{N}_G^\reg}K_{\xi,\lambda}^{\eta,\nu}(\overline{n}_G)f_1(\overline{n}_H\overline{n}_G)f_2(\overline{n}_H)\, d\overline{n}_Gd\overline{n}_H=\int_{\overline{N}_H}A_{\xi,\lambda}^{\eta,\nu}f_1(\overline{n}_H)f_2(\overline{n}_H)\,d\overline{n}_H\\
    = \operatorname{SBO}_{\xi,\lambda}^{\eta,\nu}(f_1\otimes f_2)
\end{multline*}
by \eqref{eq:GinvariantIntegralKNbar} and \eqref{eq:SBOasIntegral}.
\end{proof}

\subsection{Evaluation on the spherical vector}

Denote by $\pi_\lambda$ and $\tau_\nu$ the representations $\pi_{\xi,\lambda}$ and $\tau_{\eta,\nu}$ for $\xi=(0,\ldots,0)$ and $\eta=(0,\ldots,0)$ and by $\mathbf{A}_\lambda^\nu=\mathbf{A}_{\xi,\lambda}^{\eta,\nu}$ the corresponding holomorphic family of symmetry breaking operators. The representation $\pi_\lambda$ contains a unique $K_G$-fixed vector $\mathbf{1}_\lambda$ such that $\mathbf{1}_\lambda(e)=1$. Abusing notation, we denote by $\mathbf{1}_\nu$ the analogous $K_H$-fixed vector in $\tau_\nu$. Since $\mathbf{A}_\lambda^\nu\mathbf{1}_\lambda$ is $K_H$-fixed, it has to be a scalar multiple of $\mathbf{1}_\nu$:
\begin{equation}
    \mathbf{A}_\lambda^\nu\mathbf{1}_\lambda = \beta(\lambda,\nu)\mathbf{1}_\nu.\label{eq:DefBeta}
\end{equation}
The main result of this subsection is a formula for $\beta(\lambda,\nu)$. For the statement recall that $\chi_i$ and $\psi_j$ denote the characters of $\RR^\times$ given by $\chi_i(x)=|x|^{\lambda_i}$ and $\psi_i(x)=|x|^{\nu_j}$.

\begin{theorem}\label{thm:EvaluationSphericalVector}
    For all $(\lambda,\nu)\in\CC^{n+1}\times\CC^n$ we have
    $$ \mathbf{A}_\lambda^\nu\mathbf{1}_\lambda = \const\times e_G(\lambda)e_H(-\nu)\mathbf{1}_\nu, $$
    where the non-zero constant is independent of $\lambda$ and $\nu$ and
    \begin{equation}
        e_G(\lambda) = \prod_{1\leq i<j\leq n+1}L(1,\chi_i\chi_j^{-1})^{-1} \qquad \mbox{and} \qquad e_H(-\nu) = \prod_{1\leq i<j\leq n}L(1,\psi_i^{-1}\psi_j)^{-1}\label{eq:Efunction}
    \end{equation}
    are the Harish-Chandra $e$-functions for $G$ and $H$.
\end{theorem}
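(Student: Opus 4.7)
The plan is to extract the scalar $\beta(\lambda,\nu)$ defined in \eqref{eq:DefBeta} by pairing with the spherical vector in the dual representation, convert the resulting expression into a Rankin--Selberg integral via Proposition~\ref{prop:ComparisonSBOvsRS}, and apply Ishii--Stade's evaluation \cite{IS13} of the Rankin--Selberg integral on spherical Whittaker vectors. Setting $\xi=\eta=\mathbf{0}$ and pairing $\mathbf{A}_\lambda^\nu\mathbf{1}_\lambda=\beta(\lambda,\nu)\mathbf{1}_\nu$ against $\mathbf{1}_{-\nu}\in\tau_{-\nu}$ via the invariant pairing \eqref{eq:InvariantPairing}, the $K$-picture \eqref{eq:GinvariantIntegralKNbar} gives
\[
\int_{K_H}\mathbf{1}_\nu(k)\mathbf{1}_{-\nu}(k)\,dk = \operatorname{vol}(K_H),
\]
since both spherical vectors are identically $1$ on $K_H$. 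Hence $\beta(\lambda,\nu)$ is proportional to $\operatorname{SBO}_\lambda^\nu(\mathbf{1}_\lambda\otimes\mathbf{1}_{-\nu})/n(\lambda,\nu)$, where $n(\lambda,\nu)$ is the normalization from \eqref{eq:DefinitionRenormalizedKernel}, specialized to $\xi=\eta=\mathbf{0}$.

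Next, I would invoke Proposition~\ref{prop:ComparisonSBOvsRS} to rewrite $\operatorname{SBO}_\lambda^\nu$ as a ratio of local $L$-factors times $\operatorname{RS}_\lambda^\nu$, so that
\[
\beta(\lambda,\nu) = \operatorname{const}\times \frac{1}{n(\lambda,\nu)}\prod_{i+j\geq n+2}\frac{L(\tfrac{1}{2},\chi_i^{-1}\psi_j)}{L(\tfrac{1}{2},\chi_i\psi_j^{-1})}\operatorname{RS}_\lambda^\nu(\mathbf{1}_\lambda\otimes\mathbf{1}_{-\nu}).
\]
The central input is then the Ishii--Stade formula: for the spherical Whittaker vectors $W^{0,\lambda}_{\mathbf{1}_\lambda}$ and $\overline{W}^{0,-\nu}_{\mathbf{1}_{-\nu}}$, the Rankin--Selberg integral evaluates to
\[
\operatorname{RS}_\lambda^\nu(\mathbf{1}_\lambda\otimes\mathbf{1}_{-\nu}) = \operatorname{const}\times e_G(\lambda)\,e_H(-\nu)\prod_{i,j}L(\tfrac{1}{2},\chi_i\psi_j^{-1})
\]
up to a constant independent of $(\lambda,\nu)$. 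A routine but necessary check here is that the Whittaker normalization used to define $W^{\xi,\lambda}$ and $\overline{W}^{\eta,\nu}$ matches the normalization in \cite{IS13} up to such a constant; since both are specified by an explicit Jacquet integral over $N_G$ (resp.\ $N_H$) against the additive character, this amounts to tracking normalization constants.

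The last step is the bookkeeping: combining the three factors, the contribution $\prod_{i+j\leq n+1}L(\tfrac{1}{2},\chi_i\psi_j^{-1})\prod_{i+j\geq n+2}L(\tfrac{1}{2},\chi_i^{-1}\psi_j)$ from $n(\lambda,\nu)^{-1}$ and the ratio $\prod_{i+j\geq n+2}L(\tfrac{1}{2},\chi_i^{-1}\psi_j)/L(\tfrac{1}{2},\chi_i\psi_j^{-1})$ from Proposition~\ref{prop:ComparisonSBOvsRS} together cancel the full product $\prod_{i,j}L(\tfrac{1}{2},\chi_i\psi_j^{-1})$ appearing in Ishii--Stade. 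Here one uses that the sets $\{(i,j):i+j\leq n+1\}$ and $\{(i,j):i+j\geq n+2\}$ partition $\{1,\ldots,n+1\}\times\{1,\ldots,n\}$. After this cancellation the only surviving factor is $e_G(\lambda)e_H(-\nu)$, as claimed. The main obstacle is not conceptual but rather the careful matching of conventions for Whittaker normalizations, Haar measures on $K_G$, $K_H$, $N_G$, $N_H$, and the $G$-invariant integral $d(gP_G)$, so that the ``constant'' factors collected along the way are genuinely constant in $(\lambda,\nu)$.
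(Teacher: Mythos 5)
Your proposal is correct and follows essentially the same route as the paper: pair \eqref{eq:DefBeta} with $\mathbf{1}_{-\nu}$ to get $\beta(\lambda,\nu)=\const\times\operatorname{SBO}_\lambda^\nu(\mathbf{1}_\lambda\otimes\mathbf{1}_{-\nu})/n(\lambda,\nu)$, convert to $\operatorname{RS}_\lambda^\nu$ via Proposition~\ref{prop:ComparisonSBOvsRS}, and apply the Ishii--Stade evaluation, after which the $L$-factor bookkeeping (using that $\{i+j\leq n+1\}$ and $\{i+j\geq n+2\}$ partition the index set) cancels exactly as you describe. Your remark about matching the Whittaker and measure normalizations up to $(\lambda,\nu)$-independent constants is the same caveat the paper records parenthetically after Theorem~\ref{thm:IS}.
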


Note that the denominator of $e_G(\lambda)$ and $e_H(\nu)$ is the denominator in the Gindikin--Karpelevic formula for the corresponding Harish-Chandra $c$-function.

\begin{proof}
Pairing both sides of \eqref{eq:DefBeta} with $\mathbf{1}_{-\nu}\in\tau_{-\nu}$ and using that the pairing of $\mathbf{1}_\nu$ and $\mathbf{1}_{-\nu}$ is a positive constant independent of $\nu$, we find that
\begin{equation}
    \beta(\lambda,\nu) = \const\times\frac{\operatorname{SBO}_\lambda^\nu(\mathbf{1}_\lambda\otimes\mathbf{1}_{-\nu})}{\prod_{i+j\leq n+1}L(\frac{1}{2},\chi_i\psi_j^{-1})\prod_{i+j\geq n+2}L(\frac{1}{2},\chi_i^{-1}\psi_j)}.\label{eq:BetaInTermsOfSBO}
\end{equation}
By Proposition~\ref{prop:ComparisonSBOvsRS} this can be written as
\[ \beta(\lambda,\nu) = \const\times\frac{\operatorname{RS}_\lambda^\nu(\mathbf{1}_\lambda\otimes\mathbf{1}_{-\nu})}{\prod_{i=1}^{n+1}\prod_{j=1}^nL(\frac{1}{2},\chi_i\psi_j^{-1})}. \]
The claim now follows from the following theorem for $\operatorname{RS}_\lambda^\nu(\mathbf{1}_\lambda\otimes\mathbf{1}_{-\nu})$ by Ishii and Stade.
\end{proof}

\begin{theorem}[{see \cite[Theorem 3.1~(1)]{IS13}}]\label{thm:IS}
    For $(\lambda,\nu)\in\CC^{n+1}\times\CC^n$ we have
    $$ \operatorname{RS}_\lambda^\nu(\mathbf{1}_\lambda\otimes\mathbf{1}_{-\nu}) = \const\times e_G(\lambda)e_H(-\nu)\prod_{i=1}^{n+1}\prod_{j=1}^n L\Big(\frac{1}{2},\chi_i\psi_j^{-1}\Big). $$
\end{theorem}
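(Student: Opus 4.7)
\textbf{Proof proposal for Theorem \ref{thm:IS}.} The plan is to reduce the Rankin--Selberg integral to an explicit Mellin--Barnes integral over the split torus $A_H$ of $H$ and then evaluate it by iterated Barnes-type lemmas. First, since $\mathbf{1}_\lambda$ is left $K_G$-invariant and $W^\lambda$ is a $G$-intertwiner, the Whittaker function $W_{\mathbf{1}_\lambda}^\lambda$ is left $K_G$-invariant, hence in particular left $K_H$-invariant. The same holds for $\overline{W}_{\mathbf{1}_{-\nu}}^{-\nu}$. Using the Iwasawa decomposition $H = K_H A_H N_H$ and the Whittaker transformation law, I would rewrite
\[
\operatorname{RS}_\lambda^\nu(\mathbf{1}_\lambda\otimes\mathbf{1}_{-\nu}) = \int_{A_H} W_{\mathbf{1}_\lambda}^\lambda(a)\,\overline{W}_{\mathbf{1}_{-\nu}}^{-\nu}(a)\,\delta_H(a)^{-1}\,da,
\]
where $\delta_H$ is the modular character of $P_H$, so the problem becomes purely torus-theoretic.

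The next step is to invoke the explicit Mellin--Barnes representation of the spherical Whittaker function on $\GL(n,\RR)$: $W_{\mathbf{1}_\nu}^\nu(a)$, restricted to $a = \diag(a_1,\ldots,a_n)$ and suitably normalized, has a product of $a_i$-powers times an iterated contour integral $\int_{(s)}\prod \Gamma(\cdots) \prod a_i^{s_i}\,ds$, with the gamma factors essentially of the form $\Gamma((s_i - \nu_j + \cdots)/2)$; see Stade's expressions (going back to Bump and Stade). Substituting the Mellin--Barnes representations for both Whittaker functions, and combining with the $A_H$-integral, converts the product $\prod a_i^{\mu_i}$ into Dirac-$\delta$'s pinning one Mellin variable on each level. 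What remains is an iterated Mellin--Barnes integral whose integrand is a product of gamma factors in the variables $\lambda_i,\nu_j$ and residual contour variables. The Ishii--Stade formula then follows by successive application of the first and second Barnes lemmas (possibly combined with Barnes' second lemma and its $q$-analogues where needed), which precisely telescope the gamma products into the ratio $e_G(\lambda)e_H(-\nu)\prod_{i,j}L(\tfrac12,\chi_i\psi_j^{-1})$.

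An attractive alternative is induction on $n$: the spherical Whittaker function on $\GL(n+1,\RR)$ admits a classical recursion expressing $W_{\mathbf{1}_\lambda}^\lambda(a)$ as an integral of a power function against the spherical Whittaker function on $\GL(n,\RR)$ along the ``mirabolic'' direction. Plugging this into the Rankin--Selberg integral and applying Fubini swaps the roles of $\GL(n+1)\times \GL(n)$ with $\GL(n)\times \GL(n)$ at shifted parameters, and one closes the induction by invoking the analogous Stade formula at the previous step, together with the base case $n=1$ (a one-dimensional Mellin integral evaluated directly via the Gamma duplication/reflection formulas). The main obstacle in either route is the purely computational bookkeeping: ensuring absolute convergence of the contour integrals in a common non-empty tube, tracking the precise shifts in the $[\varepsilon]=0$ gamma factors that define the $L$-functions, and checking that the combinatorial pattern of Barnes lemmas really collapses to the symmetric product $\prod_{i,j}L(\tfrac12,\chi_i\psi_j^{-1})$ divided by the $c$-function denominators appearing in $e_G$ and $e_H$. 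Since the identity is a meromorphic identity in $(\lambda,\nu)$, it suffices to verify it in any non-empty open region of absolute convergence and then extend by analytic continuation.
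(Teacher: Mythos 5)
This theorem is not proved in the paper at all: it is imported verbatim from Ishii--Stade \cite[Theorem 3.1~(1)]{IS13}, modulo the sign change $\nu\mapsto-\nu$ and the normalization of the Whittaker function by $e_G(\lambda)^{-1}$, which the authors record in the parenthetical remark after the statement. Your sketch does describe the strategy actually used in that reference and its precursors by Stade: reduce the pairing of two spherical Whittaker functions over $H/N_H$ to an integral over the torus $A_H$, insert the Mellin--Barnes representation of the $\GL(n,\RR)$ spherical Whittaker function, and evaluate the resulting iterated contour integral; the recursive alternative you mention (peeling off the mirabolic direction to pass from $\GL(n+1)\times\GL(n)$ to the previous rank) is likewise the mechanism behind Stade's and Ishii--Stade's arguments. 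Your opening reduction is sound: $W^{\lambda}$ intertwines the left-regular actions, so $W^{\lambda}_{\mathbf{1}_\lambda}$ is left $K_G$-invariant, the integrand is right $N_H$-invariant, and the integral collapses to $A_H$ with a modular factor.

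The genuine gap is that the entire content of the theorem lies in the step you defer to ``computational bookkeeping.'' The multi-dimensional Barnes integral that arises is not evaluated by successive applications of the first and second Barnes lemmas for general $n$; establishing that the gamma products telescope into $e_G(\lambda)e_H(-\nu)\prod_{i,j}L(\tfrac{1}{2},\chi_i\psi_j^{-1})$ required new generalized Barnes-type integral identities and a nontrivial induction interlacing the $\GL(n+1)\times\GL(n)$ and $\GL(n)\times\GL(n)$ cases --- this is precisely what \cite{IS13} proves. Asserting that ``the combinatorial pattern of Barnes lemmas really collapses'' to the stated product is asserting the theorem, not proving it, and the aside about $q$-analogues is out of place since nothing $q$-deformed enters the archimedean computation. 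Your closing observation that it suffices to verify the identity on a nonempty open tube of absolute convergence and extend meromorphically is correct, but without carrying out (or at least correctly citing) the Barnes-type evaluations the argument remains a plan. For the purposes of this paper the honest proof is the citation to \cite{IS13}, together with careful tracking of the two normalization discrepancies noted after the statement.
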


(Note that \cite{IS13} uses $\nu$ instead of $-\nu$ and normalizes the Whittaker function $W_{\mathbf{1}_\lambda}^\lambda$ by $e_G(\lambda)^{-1}$.)

\appendix

\section{Integral Formulas}

\noindent In this appendix we state an integral formula that is used in the proof of Theorem~\ref{main theorem} and also in the discussion of the special case $n=1$ in Section~\ref{eksempel funktionalligning}.

\begin{proposition}[See {{\cite[Prop. B.2]{BD23}}}]\label{foldning}
	For $\alpha,\beta\in\CC$ with $\textup{Re}(\alpha),\textup{Re}(\beta)>-1$ and $\textup{Re}(\alpha+\beta+1)<0$ and $\varepsilon,\xi\in\{0,1\}$ we have
	$$\int_\RR |y|^\alpha_\varepsilon|x-y|^\beta_\xi dy=t(\alpha,\beta,\varepsilon,\xi)|x|^{\alpha+\beta+1}_{\varepsilon+\xi}, $$
	where 
	\begin{align*}
		t(\alpha,\beta,\varepsilon,\xi)=(-1)^{\varepsilon\xi}\sqrt{\pi}\frac{\Gamma(\frac{\alpha+1+\varepsilon}{2})\Gamma(\frac{\beta+1+\xi}{2})\Gamma(\frac{-\alpha-\beta-1+[\varepsilon+\xi]}{2})}{\Gamma(\frac{-\alpha+\varepsilon}{2})\Gamma(\frac{-\beta+\xi}{2})\Gamma(\frac{\alpha+\beta+2+[\xi+\varepsilon]}{2})}.
	\end{align*}
\end{proposition}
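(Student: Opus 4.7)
The plan is to prove this one-variable convolution identity by first reducing to the case $x=1$ via scaling and then explicitly evaluating the resulting integral by splitting the real line into three intervals, each of which gives a Beta integral.

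First I would perform the change of variables $y = xu$. Since $|xu|^\alpha_\varepsilon = |x|^\alpha_\varepsilon|u|^\alpha_\varepsilon$ and similarly $|x(1-u)|^\beta_\xi = |x|^\beta_\xi|1-u|^\beta_\xi$, the integral factors as
\begin{equation*}
    \int_\RR |y|^\alpha_\varepsilon |x-y|^\beta_\xi\,dy = |x|^{\alpha+\beta+1}_{\varepsilon+\xi}\cdot I(\alpha,\beta,\varepsilon,\xi),\quad I(\alpha,\beta,\varepsilon,\xi):=\int_\RR |u|^\alpha_\varepsilon|1-u|^\beta_\xi\,du.
\end{equation*}
This already gives the asserted $x$-dependence, so the task reduces to computing $I$.

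Next I would split $I=I_1+I_2+I_3$ according to $\RR = (-\infty,0)\sqcup(0,1)\sqcup(1,\infty)$. On $(0,1)$ both factors are ordinary positive powers and $I_2 = B(\alpha+1,\beta+1)=\Gamma(\alpha+1)\Gamma(\beta+1)/\Gamma(\alpha+\beta+2)$. On $(-\infty,0)$, the substitution $u=-v$ produces a sign $(-1)^\varepsilon$ and yields $\int_0^\infty v^\alpha(1+v)^\beta\,dv$; the further substitution $v = t/(1-t)$ identifies this with $B(\alpha+1,-\alpha-\beta-1)$, so $I_1=(-1)^\varepsilon\Gamma(\alpha+1)\Gamma(-\alpha-\beta-1)/\Gamma(-\beta)$. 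Symmetrically, on $(1,\infty)$ the substitution $u=1/t$ produces $(-1)^\xi$ and $I_3=(-1)^\xi\Gamma(\beta+1)\Gamma(-\alpha-\beta-1)/\Gamma(-\alpha)$. The convergence hypotheses $\Re(\alpha),\Re(\beta)>-1$ and $\Re(\alpha+\beta+1)<0$ are exactly what is needed for the three Beta integrals to be absolutely convergent.

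Finally I would combine the three expressions and match them against the stated formula for $t(\alpha,\beta,\varepsilon,\xi)$. Using the reflection formula $\Gamma(z)\Gamma(1-z)=\pi/\sin(\pi z)$, I rewrite $\Gamma(-\alpha)^{-1}$, $\Gamma(-\beta)^{-1}$ and $\Gamma(\alpha+\beta+2)^{-1}$ so that $I_1,I_2,I_3$ share the common factor $\Gamma(\alpha+1)\Gamma(\beta+1)\Gamma(-\alpha-\beta-1)/\pi^{2}$, reducing the problem to the trigonometric identity
\begin{equation*}
    (-1)^\varepsilon\sin(\pi(\alpha+1))+\sin(\pi(\alpha+\beta+2))+(-1)^\xi\sin(\pi(\beta+1)) = 2\sin\Bigl(\tfrac{\pi(\alpha+1+\varepsilon)}{2}\Bigr)\sin\Bigl(\tfrac{\pi(\beta+1+\xi)}{2}\Bigr)\cdot C,
\end{equation*}
with an explicit cosine factor $C$ coming from $\sin(\pi(\alpha+\beta+1-[\varepsilon+\xi])/2)$; this is a finite check over the four parity cases of $(\varepsilon,\xi)\in\{0,1\}^2$ using sum-to-product formulas. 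One then converts everything back to $\Gamma$-values at half-integer arguments via the duplication formula $\Gamma(z)\Gamma(z+\tfrac12)=2^{1-2z}\sqrt{\pi}\,\Gamma(2z)$, which, after cancellation of the resulting powers of $2$ and $\pi$, produces exactly the product of six gamma factors in $t(\alpha,\beta,\varepsilon,\xi)$, together with the prefactor $(-1)^{\varepsilon\xi}\sqrt{\pi}$. The main obstacle is precisely this last algebraic step: tracking the signs $(-1)^\varepsilon,(-1)^\xi$ and the bracket function $[\varepsilon+\xi]$ (which behaves differently depending on whether $\varepsilon+\xi$ is $0$, $1$, or $2$) so that a single closed formula emerges uniformly in all four cases.
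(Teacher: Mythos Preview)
The paper does not actually prove this proposition; it merely cites \cite[Prop.~B.2]{BD23} and states the result. Your proposal therefore cannot be compared against any argument in the present paper, but it is a correct and self-contained proof strategy.

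Your scaling argument is sound: the multiplicativity $|xu|^\alpha_\varepsilon=|x|^\alpha_\varepsilon|u|^\alpha_\varepsilon$ together with $dy=|x|\,du$ gives exactly the factor $|x|^{\alpha+\beta+1}_{\varepsilon+\xi}$, reducing to the case $x=1$. The three Beta integrals you compute for $I_1,I_2,I_3$ are all correct, and the convergence hypotheses match precisely. The only part left somewhat implicit is the final repackaging into the stated closed form for $t(\alpha,\beta,\varepsilon,\xi)$, which you correctly identify as a case-by-case trigonometric verification over $(\varepsilon,\xi)\in\{0,1\}^2$; this is routine but genuinely requires care with the bracket $[\varepsilon+\xi]$ when $\varepsilon=\xi=1$, since then $[\varepsilon+\xi]=0$ rather than $2$. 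If you wanted to streamline this last step, an alternative is to compute the Fourier transform of $|y|^\alpha_\varepsilon$ (a Riesz distribution) and use that convolution becomes multiplication, which yields the gamma ratio directly and uniformly in the parities without splitting into intervals.
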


By a simple change of variables this implies:

\begin{corollary}\label{foldning cor}
	Let $a,c\in \RR^\times$ and either $b$ or $d$ non-zero in $\RR$. For $\Re(\alpha),\Re(\beta)>-1$ with $\Re(\alpha+\beta+1)<0$ we have
	\[
	\int_\RR |ax+b|^\alpha_\varepsilon |cx+d|_\xi^\beta dx=(-1)^{\varepsilon}t(\alpha,\beta,\varepsilon,\xi)|a|_\xi^{-\beta-1}|c|_\varepsilon^{-\alpha-1}|ad-bc|_{\varepsilon+\xi}^{\alpha+\beta+1}
	\]
    with $t(\alpha,\beta,\varepsilon,\xi)$ as in Proposition~\ref{foldning}.
\end{corollary}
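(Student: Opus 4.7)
The plan is to reduce Corollary~\ref{foldning cor} to Proposition~\ref{foldning} by two consecutive affine substitutions that turn the linear forms $ax+b$ and $cx+d$ into $y$ and a constant shift, respectively. The book-keeping of sign characters will be handled throughout by the elementary identity
\[
|u/v|^\mu_\varepsilon = |u|^\mu_\varepsilon\,|v|^{-\mu}_\varepsilon \qquad (u,v\in\RR^\times,\,\mu\in\CC,\,\varepsilon\in\ZZ/2\ZZ),
\]
which is immediate from $\sgn(u/v)=\sgn(u)\sgn(v)$ and $|v|^{-\mu}_\varepsilon = \sgn(v)^\varepsilon|v|^{-\mu}$.

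First I would substitute $y=ax+b$, so that $x=(y-b)/a$, $dx=dy/|a|$, and
\[
cx+d = \frac{cy + (ad-bc)}{a}.
\]
Using the identity above to extract $|a|^{-\beta}_\xi$, this rewrites the integral as
\[
\frac{|a|^{-\beta}_\xi}{|a|}\int_\RR |y|^\alpha_\varepsilon\,|cy+(ad-bc)|^\beta_\xi\,dy.
\]
A second substitution $u=cy$ peels off $|c|^{-\alpha}_\varepsilon$ in the same way, yielding
\[
\frac{|a|^{-\beta}_\xi|c|^{-\alpha}_\varepsilon}{|a|\,|c|}\int_\RR |u|^\alpha_\varepsilon\,|u+(ad-bc)|^\beta_\xi\,du.
\]

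Finally, the reflection $u\mapsto -u$ picks up a factor $(-1)^\varepsilon$ from $|-w|^\alpha_\varepsilon=(-1)^\varepsilon|w|^\alpha_\varepsilon$ while turning $|u+(ad-bc)|^\beta_\xi$ into $|(ad-bc)-w|^\beta_\xi$ without any sign (since $-w+(ad-bc)=(ad-bc)-w$). The inner integral is then literally in the form of Proposition~\ref{foldning} with $x=ad-bc$, and evaluates to $(-1)^\varepsilon t(\alpha,\beta,\varepsilon,\xi)|ad-bc|^{\alpha+\beta+1}_{\varepsilon+\xi}$ (noting that the hypotheses $\Re(\alpha),\Re(\beta)>-1$ and $\Re(\alpha+\beta+1)<0$ pass unchanged through the substitutions, and that $ad-bc\neq 0$ under the assumption that not both $b$ and $d$ vanish, together with $a,c\neq 0$, in the generic case). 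Collecting the prefactors and simplifying $|a|^{-\beta}_\xi/|a|=|a|^{-\beta-1}_\xi$, $|c|^{-\alpha}_\varepsilon/|c|=|c|^{-\alpha-1}_\varepsilon$ produces exactly the right-hand side of the corollary. There is no real obstacle: the proof is just tracking sign characters through three linear substitutions, and all the analytic content is already in Proposition~\ref{foldning}.
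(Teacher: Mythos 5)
Your proof is correct and is precisely the "simple change of variables" argument that the paper invokes without writing out: the two affine substitutions, the reflection $u\mapsto -u$ producing the factor $(-1)^\varepsilon$, and the identity $|a|^{-\beta}_\xi/|a|=|a|^{-\beta-1}_\xi$ all check out against Proposition~\ref{foldning}. The only (harmless) imprecision is the aside about $ad-bc\neq0$: the stated hypotheses do not literally force this, but that is an issue with the corollary's formulation rather than with your argument.
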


\end{document}